\documentclass[12pt,reqno]{amsart}
\usepackage{amsmath,amsthm,amstext}
\usepackage{amsfonts,amssymb}
\usepackage[mathscr]{eucal}
\usepackage[]{hyperref}
\usepackage{setspace,url}
\usepackage{tikz}
\usepackage{bm}
\usepackage{subfig}
\usepackage{graphicx}
\usepackage{enumerate}

\headheight=8.2pt \topmargin=0pt
\textheight=600pt \textwidth=460pt
\oddsidemargin=10pt \evensidemargin=10pt

\newtheorem{lemma}{Lemma}

\newtheorem{proposition}{Proposition}
\newtheorem{remark}{Remark}

\numberwithin{equation}{section}

\numberwithin{equation}{section}

\renewcommand{\zeta}{z}
\renewcommand{\epsilon}{\varepsilon}
\renewcommand{\bar}{\overline}

\newcommand{\rk}{{\rm k}}

\newcommand{\reals}{\mathbb{R}}

\newcommand{\RR}{\ensuremath{\mathbb{R}}}

\newcommand{\bbR}{\ensuremath{\mathbb{R}}} 
\newcommand{\calO}{\ensuremath{\mathcal{O}}} 


\newcommand{\R}{\mathbb{R}}

\newcommand{\bx}{{\rm x}}
\newcommand{\bX}{{\rm X}}
\newcommand{\bn}{{\rm n}}

\newcommand{\K}{{\kappa}}
\newcommand{\sgn}{{\rm sgn}}
\newcommand{\etan}{\eta'}
\newcommand{\xin}{\xi'}
\newcommand{\etah}{\widetilde{\eta}}
\newcommand{\xih}{\widetilde{\xi}}




\title[Hamiltonian Dysther equation for 3D deep water waves]{HAMILTONIAN DYSTHE EQUATION FOR 3D DEEP-WATER
GRAVITY WAVES}

\author[P. Guyenne]{Philippe Guyenne}
\address[P. Guyenne]{Department of Mathematical Sciences, University of Delaware, Newark, DE 19716}
\email{guyenne@udel.edu}

\author[A. Kairzhan]{Adilbek Kairzhan}
\address[A. Kairzhan]{Department of  Mathematics, University of Toronto, Ontario, M5S2E4, Canada}
\email{kairzhan@math.toronto.edu}

\author[C. Sulem]{Catherine Sulem}
\address[C. Sulem]{Department of  Mathematics, University of Toronto, Ontario, M5S2E4, Canada}
\email{sulem@math.utoronto.ca}

\newcommand\blfootnote[1]{%
  \begingroup
  \renewcommand\thefootnote{}\footnote{#1}%
  \addtocounter{footnote}{-1}%
  \endgroup
}

\begin{document}

\maketitle

\begin{abstract}
This article concerns the water wave problem in a three-dimensional domain of infinite depth and examines the modulational regime for weakly nonlinear wavetrains. We use the method of normal form transformations near the equilibrium state to provide a new derivation of the Hamiltonian Dysthe equation describing the slow evolution of the wave envelope. A precise calculation of the third-order normal form allows for a refined reconstruction of the free surface. We test our approximation against direct numerical simulations of the three-dimensional Euler system and against predictions from the classical Dysthe equation, and find very good agreement. 
\end{abstract}

\blfootnote{\textit{Keywords}: 3D deep-water waves,  Hamiltonian systems,  Normal form transformations,  Modulational Analysis,  Dysthe equation, Numerical simulations \\
\textit{2020 Mathematics Subject Classification:} 76B15, 35Q55}



\section{Introduction}

Modulation theory is a well-established theory  to study the long-time evolution and stability of oscillatory
solutions to partial differential equations. In the setting of a modulational regime,
an Ansatz for the solutions is introduced   in the form of  a weakly nonlinear modulated wavetrain and one
 derives reduced equations describing the evolution of its slowly varying envelope.
In the context of surface gravity waves, one finds the Nonlinear Schr\"odinger (NLS) equation, or more generally
the Davey--Stewartson system in three  dimensions. A higher-order approximation was proposed by Dysthe \cite{D79}  for deep water, using 
the perturbative method of  multiple  scales.
 It was  later extended to other settings such as   finite depth \cite{BJ86},
 gravity-capillary waves \cite{H85},  exact linear dispersion \cite{TKDV00}, waves in the presence of  dissipation \cite{HM91},   
 even to higher order \cite{SP20}.  The Dysthe equation  and its variants have been widely used in the water wave community due to their efficiency at describing realistic waves, 
in particular waves with moderately large steepness. Such a model exhibits contributions from the mean flow induced by radiation stresses of the modulated wavetrain, which in turn leads to  improvement in the stability properties of finite-amplitude waves.

However, unlike the NLS equation,  earlier versions of the  Dysthe equation are not Hamiltonian while the original water wave system has  a  Hamiltonian structure \cite{Z68}. 
Gramstad and Trulsen \cite{GT11}  used a refined version of Zakharov's four-wave  interaction model
as obtained by Krasitskii \cite{K94} and derived a Hamiltonian version of Dysthe's equation for  three-dimensional gravity surface
 waves on finite depth.  
 Craig et al. \cite{CGS21} considered the two-dimensional problem of gravity waves on deep water and derived a Hamiltonian Dysthe
 equation from the original water wave system  through a sequence of canonical transformations involving scalings,
 a modulational Ansatz  as well as  homogenization
techniques that preserve the Hamiltonian character of the problem.
 A central tool in this approach is the Dirichlet--Neumann operator that appears naturally in the  Hamiltonian (total energy) of  the water wave system.
 It has a convergent Taylor series in terms of the surface elevation \cite{CS93} which in turn provides an expansion of the Hamiltonian
 for small-amplitude waves.  This analysis involves the construction of a Birkhoff  normal form transformation that
eliminates non-resonant cubic terms leading to a reduced  Hamiltonian at fourth order.  
The resulting Dysthe equation is Hamiltonian and has differences in the high-order nonlinear terms as compared to the original equation
 of \cite{D79}.  Furthermore in  \cite{CGS21}, this Hamiltonian Dysthe equation was tested against direct numerical simulations of the Euler system and very good agreement
 was obtained. For this purpose, one needs to reconstruct the surface elevation from the solution of the envelope equation. 
While classically this reconstruction is carried out perturbatively in terms of a Stokes expansion \cite{GT11},  the procedure in \cite{CGS21} is achieved through  a non-perturbative method that requires  solving a Burgers equation associated to the cubic Birkhoff normal form transformation.
  In subsequent work \cite{GKSX21}, an alternate spatial version of this Hamiltonian Dysthe equation, well adapted  for comparison with laboratory experiments,
  was derived and tested against experimental results on periodic groups and short-wave packets as previously discussed by Lo and Mei \cite{LM85}.
 
The purpose of this paper is to extend this analysis to the three-dimensional problem of gravity waves on deep water.
Our new contributions are two-fold. First, we present the derivation of a Hamiltonian Dysthe equation 
through a sequence of canonical transformations that preserve the Hamiltonian character of the system, 
starting from the three-dimensional Euler equations for an irrotational ideal fluid.  
The surface reconstruction also involves solving a Hamiltonian system of differential equations.
As a consequence, the entire solution process fits within a Hamiltonian framework.
Second, we test our model against direct numerical simulations of the three-dimensional Euler system
and against numerical solutions of the classical Dysthe equation.
We propose a simplified version of our approach for surface reconstruction that is more efficient numerically
by exploiting the disparity in length scales between the longitudinal and transverse wave dynamics.
 
 The  paper is organized as follows.   In Section 2,  the  mathematical formulation of three-dimensional deep-water water waves
as a Hamiltonian system for the surface elevation and trace of the velocity potential is recalled. Section 3 provides the basic tools of Birkhoff normal form transformations, and in particular the third-order normal form that eliminates all cubic terms 
from the Hamiltonian is obtained.  In Section 4, we calculate the new Hamiltonian truncated at fourth order and introduce the modulational
Ansatz  where approximate solutions take the form of  weakly modulated monochromatic waves, leading to a Hamiltonian
Dysthe equation for the wave envelope as described in Section 5. Section 6 is devoted to the reconstruction of the free surface, which includes
inverting the third-order normal form transformation. In Section 7, we perform a modulational stability analysis for Stokes wave solutions.
Finally, we present numerical tests in Section 8.

\section{The water wave system}

We consider a three-dimensional fluid  in a domain of infinite depth
$
S(\eta,t) = \{(\bx, z): \bx = (x, y) \in \R^2, -\infty < z < \eta(\bx, t)\},
$
where $z=\eta(\bx, t)$ represents the free surface at time $t$. 
Assuming the fluid is incompressible, inviscid and irrotational, it is described by a potential flow such that the velocity field ${\rm u}(\bx,z,t) =\nabla \varphi$ satisfies
\begin{equation*}\label{Eq:LaplacesEqn}
   \Delta \varphi =0 \,, 
\end{equation*}
in the fluid domain $S(\eta,t)$. 
On the surface $\{ z = \eta(\bx,t) \}$, 
two boundary conditions are imposed, namely
\begin{eqnarray*}
& \partial_t \eta = \partial_z \varphi - \partial_\bx \eta \cdot \partial_\bx \varphi \,, \quad
 \partial_t \varphi+ \frac{1}{2} |\nabla \varphi|^2 + g\eta = 0 \,,
\nonumber
\end{eqnarray*}
where $g$ is the acceleration due to gravity.
The symbol $\nabla$ denotes the spatial gradient $(\partial_\bx, \partial_z)$ when applied to functions
or the variational gradient when applied to functionals.


\subsection{Hamiltonian formulation}

It is known since the seminal paper of Zakharov \cite{Z68} that the water wave system has a canonical
Hamiltonian formulation with conjugate variables $(\eta(\bx,t), \xi(\bx,t) := \varphi(\bx, \eta(\bx,t),t))$ such that
\begin{equation}
\label{ww-hamiltonian-equation}
\partial_t \begin{pmatrix}
\eta \\ \xi
\end{pmatrix} =  J \, \nabla H(\eta,\xi)  = \begin{pmatrix}
0 & 1 \\ -1 & 0
\end{pmatrix} \begin{pmatrix}
\partial_\eta H \\ \partial_\xi H
\end{pmatrix} \,,
\end{equation}
where the Hamiltonian $H(\eta, \xi)$ is the total energy and is  expressed in terms of the Dirichlet--Neumann operator (DNO) $G(\eta)$ as
\begin{equation}
\label{hamiltonian}
H(\eta, \xi) = \frac{1}{2} \int_{\bbR^2} \left( \xi G(\eta) \xi + g \eta^2 \right) d\bx \,.
\end{equation}
This operator 
is defined as a map which associates to the Dirichlet data $\xi$ the normal derivative of the harmonic function $\varphi$ at the surface with
a normalizing factor, namely 
$$
G(\eta) : \xi \longmapsto \sqrt{1+|\partial_\bx \eta|^2} \, \partial_{\bn} \varphi \big|_{z=\eta} \,.
$$
It is analytic in $\eta$ \cite{CM85} and admits a convergent Taylor series expansion
\begin{equation} 
\label{series}
G(\eta) = \sum_{m=0}^\infty G^{(m)}(\eta) \,,
\end{equation}
 about $\eta=0$. For each $m$, $G^{(m)}(\eta)$ is homogeneous of degree $m$ in $\eta$ and can be calculated explicitly via  recursive relations \cite{CS93}.  Denoting  $D = -i \, \partial_{\bx}$, the first three terms are
\begin{equation}
\label{g-012-recursive}
\left\{ \begin{array}{l}
G^{(0)}(\eta) = |D| \,, \\
G^{(1)}(\eta) = D \cdot \eta D - G^{(0)}\eta G^{(0)} \,, \\
G^{(2)}(\eta) = -\frac{1}{2} \left( |D|^2 \eta^2 G^{(0)}+ G^{(0)} \eta^2 |D|^2 - 2G^{(0)} \eta G^{(0)} \eta G^{(0)} \right) \,.
\end{array} \right.
\end{equation}
We denote the Fourier transform of   the  real-valued pair 
$(\eta (\bx), \xi (\bx))$ by
\begin{equation*}
\label{fourier-eta-xi-def}
(\eta_\rk, \xi_\rk) = \frac{1}{2\pi} \int_{\RR^2} e^{-i\rk \cdot \bx} 
(\eta(\bx), \xi(\bx)) \, d\bx \,,
\end{equation*}
where, for simplicity, we have dropped the usual ``hat'' notation as well as the time dependence.
In Fourier variables, the water wave system also has the form of a  canonical Hamiltonian system (see Appendix \ref{App-0})
\begin{equation}
\label{ww-hamiltonian-equation-Fourier}
\partial_t \begin{pmatrix}
\eta_{-\rk} \\ \xi_{-\rk}
\end{pmatrix}
 = \begin{pmatrix}
0 & 1 \\ -1 & 0
\end{pmatrix} \begin{pmatrix}
\partial_{\eta_\rk} H \\ \partial_{\xi_\rk} H
\end{pmatrix} \,.
\end{equation}
Substituting the expansion for $G(\eta)$ into the Hamiltonian (\ref{hamiltonian}), we get
\begin{equation} \label{HH}
H = H^{(2)} + H^{(3)}+ H^{(4)} + \ldots
\end{equation}
 where each term $H^{(m)}$ is homogeneous of degree $m$ in the $(\eta, \xi)$ variables. Using  (\ref{g-012-recursive}), 
 the first three  terms  of $H$, written in Fourier variables, are
\begin{equation}
\label{h-fourier}
\begin{aligned}
H^{(2)}(\eta, \xi) & = \frac{1}{2} \int_{\RR ^2}
\left( |\rk| |\xi_\rk|^2 + g |\eta_\rk|^2 \right) d\rk \,, \\[2pt]
H^{(3)}(\eta, \xi) & = -\frac{1}{4\pi} \int_{\RR^6}
({\rm k}_1 \cdot {\rm k}_3 + |{\rm k}_1| |{\rm k}_3|) \xi_1 \eta_2 \xi_3
\delta_{123} d{\rm k}_{123} \,, \\[2pt]
H^{(4)}(\eta, \xi) & = -\frac{1}{16 \pi^2} \int_{\RR^8} 
|{\rm k}_1||{\rm k}_4| \left( |{\rm k}_1|+|{\rm k}_4|-2|{\rm k}_3+{\rm k}_4| \right)
\xi_1 \eta_2 \eta_3 \xi_4 
\delta_{1234} d{\rm k}_{1234} \,.
\end{aligned}
\end{equation}
In the above expressions, we have used the compact notations 
$\rk= (k_x, k_y)$, ${\rm k}_j = (k_{jx}, k_{jy}) \in \RR^2$,
$d{\rm k}_{123} = d{\rm k}_1 d{\rm k}_2 d{\rm k}_3$,  
$(\xi_j, \eta_j) = (\xi_{{\rm k}_j}, \eta_{{\rm k}_j})$
 and 
 $\delta_{1 \ldots n} = \delta({\rm k}_1+ \ldots + {\rm k}_n)$ where 
$
\delta(\rk) = \left( \frac{1}{2\pi} \right)^2
\int e^{- i\rk \cdot \bx} d\bx
$
is the Dirac distribution in two dimensions.
Hereafter, the domain of integration is omitted in integrals and is understood to be $\RR^2$ for each $\bx_j$ or ${\rm k}_j$.

\subsection{Complex symplectic coordinates and Poisson brackets}

The linear dispersion relation for deep-water gravity waves is
$\omega_\rk^2 = g|\rk|$. 
It is convenient to introduce the complex symplectic coordinates
\begin{equation}
\label{zeta-variable-fourier}
\begin{pmatrix}
z_\rk \\ \bar z_{-\rk}
\end{pmatrix} = P_1 \begin{pmatrix}
\eta_\rk \\ \xi_\rk
\end{pmatrix} = \frac{1}{\sqrt 2} \begin{pmatrix}
a_\rk & i \, a_\rk^{-1} \\
a_\rk & -i \, a_\rk^{-1} 
\end{pmatrix} \begin{pmatrix}
\eta_\rk \\ \xi_\rk
\end{pmatrix} \,,
\end{equation}
where $a_\rk^2 := \sqrt{g/|\rk|}= g/\omega_\rk$ and considering that the functions $\eta(\bx)$ and $\xi(\bx)$ are real-valued.
In these variables, the system \eqref{ww-hamiltonian-equation} reads
\begin{equation}
\label{z-physical-ham-system}
\partial_t \begin{pmatrix}
z_\rk \\ \bar z_{-\rk}
\end{pmatrix} = J_1 \begin{pmatrix}
\partial_{z_\rk} H \\ \partial_{\bar z_{-\rk}} H
\end{pmatrix} = \begin{pmatrix}
0 & -i \\ i & 0
\end{pmatrix} \begin{pmatrix}
\partial_{z_\rk} H \\ \partial_{\bar z_{-\rk}} H
\end{pmatrix} \,,
\end{equation}
with  $J_1 = P_1 J P_1^*$ \cite{CGS10}, where the star denotes the adjoint with respect to the $L^2$-scalar product.
The quadratic term $H^{(2)}$ becomes
\begin{equation*}
\label{quadratic-term-fourier}
H^{(2)} = \int \omega_\rk |\zeta_\rk|^2 d\rk \,,
\end{equation*}
while  the cubic term $H^{(3)}$  takes the form
\begin{equation}\label{h3}
H^{(3)} = \frac{1}{8\pi  \sqrt{2}} \int
      ( {\rm k}_1 \cdot {\rm k}_3 +|{\rm k}_1||{\rm k}_3|) \frac{a_1 a_3}{a_2} 
      (z_1-\bar z_{-1})(z_2+\bar z_{-2})(z_3-\bar z_{-3})  \delta_{123} d{\rm k}_{123} \,,
\end{equation}
where $z_{\pm j} := z_{\pm {\rm k}_j}$ and $a_j := a_{{\rm k}_j}$. 

The Poisson bracket of two functionals $K(\eta, \xi)$ and $H(\eta, \xi)$ 
of real-valued functions $\eta$ and $\xi$  is defined as
\begin{equation*}
\label{poisson-bracket}
\{K, H\} = \int (\partial_\eta H \partial_\xi K - \partial_\xi H \partial_\eta K) \, d\bx \,.
\end{equation*}
Assuming that $K$ and $H$ are real-valued, we have 
$$
\begin{aligned}
&\{K, H\}  = \int (\partial_{\eta_\rk} H \overline{\partial_{\xi_\rk} K} - \partial_{\xi_\rk} H \overline{\partial_{\eta_\rk} K}) d\rk = \int (\partial_{\eta_\rk} H \partial_{\bar \xi_\rk} K - \partial_{\xi_\rk} H \partial_{\bar \eta_\rk} K) d\rk \\ 
& = \int (\partial_{\eta_\rk} H \delta_{\xi_{-\rk}} K - \partial_{\xi_\rk} H \partial_{\eta_{-\rk}} K) d\rk   
= \int (\partial_{\eta_{{\rm k}_1}} H \partial_{\xi_{{\rm k}_2}} K - \partial_{\xi_{{\rm k}_1}} H \partial_{\eta_{{\rm k}_2}} K) \delta_{12} d{\rm k}_{12} \,.
\end{aligned}
$$
In terms of the complex symplectic coordinates, the Poisson bracket  is
\begin{equation}
\label{poisson-bracket-z}
\{K, H\} = \frac{1}{i} \int (\partial_{z_{{\rm k}_1}} H \partial_{\bar z_{-{\rm k}_2}} K - \partial_{\bar z_{- {\rm k}_1}} H \partial_{z_{{\rm k}_2}} K) \delta_{12} d{\rm k}_{12} \,.
\end{equation}

\section{Transformation theory} 

A monomial  $z_1 \ldots z_l \bar z_{-(l+1)} \ldots \bar z_{-m}$  in the Hamiltonian   
is resonant of order $m$ if
$$
\sum_{j=1}^l \omega_{{\rm k}_j} - \sum_{j=l+1}^m \omega_{{\rm k}_j} = 0 \quad \text{ for some }
{\rm k}_1+{\rm k}_2+\ldots+{\rm k}_m=0 \,.
$$
It is known that, for pure gravity waves on deep water, there are no resonant triads, that is  no triplets 
$({\rm k}_1,{\rm k}_2,{\rm k}_3)  \in {(\R^2)}^3$ with ${\rm k}_j \ne 0$ such that ${\rm k}_1+{\rm k}_2+{\rm k}_3=0$ and 
$\omega_{{\rm k}_1} \pm \omega_{{\rm k}_2} \pm \omega_{{\rm k}_3} = 0$.
This is because  $\omega(\rk)$  is an increasing concave function of $|\rk|$.

\subsection{Canonical transformation}

To eliminate non-resonant terms as they are not crucial at describing the wave dynamics, we look for a canonical transformation of physical variables
$$
\tau: w = \begin{pmatrix}
\eta \\ \xi
\end{pmatrix} \longmapsto w' \,,
$$
defined in a neighborhood of the origin, such that the transformed Hamiltonian satisfies
$$
H'(w') = H(\tau^{-1}(w')) \,, \quad 
\partial_t w' = J \, \nabla H'(w') \,,
$$
and reduces to
$$
H'(w') = H^{(2)}(w') +  Z^{(3)} + Z^{(4)} + \ldots +  Z^{(m )} + R^{(m+1)} \,,
$$
where 
$Z^{(m )}$ consists only of resonant terms  and $R^{(m+1)}$ is the remainder term \cite{CGS21b}. 
We construct the transformation $\tau$  by the Lie transform method as a Hamiltonian flow $\psi$ from ``time'' $s=-1 $ to ``time'' $s=0$ governed by
$$
\partial_s \psi = J \, \nabla K(\psi) \,, \quad \psi(w')|_{s=0} = w' \,, \quad 
\psi(w')|_{s=-1} = w \,,
$$
and associated to an auxiliary Hamiltonian $K$.
Such a transformation is canonical and preserves the Hamiltonian structure of the system. The Hamiltonian $H'$ satisfies 
$H'(w') = H(\psi(w'))|_{s=-1}$ and its Taylor expansion around $s=0$ is
$$
H'(w') = H(\psi(w'))|_{s=0} - \frac{d H}{ds}(\psi(w'))|_{s=0} + \frac{1}{2} \frac{d^2 H}{ds^2}(\psi(w'))|_{s=0} - \ldots
$$
Abusing  notations, we further use $w = (\eta,\xi)^\top $ 
to denote the new variable $w'$.  Terms in this expansion can be expressed using Poisson brackets as 
\begin{equation*}
\begin{aligned}
H(\psi(w))|_{s=0} & = H(w) \,, \\
\frac{d H}{ds}(\psi(w))|_{s=0} & = \int \left( \partial_\eta H \partial_s \eta + 
\partial_\xi H \partial_s \xi \right) d\bx \\    
& = \int \left( \partial_\eta H \partial_\xi K - \partial_\xi H \partial_\eta K \right) d\bx = 
\{K, H\}(w) \,,
\end{aligned}
\end{equation*}
and similarly for the remaining terms. The Taylor expansion of $H'$ around $s=0$ now has the form 
$$
H'(w) = H(w) - \{K, H\}(w) + \frac{1}{2} \{K, \{K, H\}\}(w) - \ldots
$$
Substituting this transformation into  the expansion (\ref{HH}) of $H$, we obtain
\begin{align*}
   H'(w) & = H^{(2)}(w) + H^{(3)}(w) + \ldots \\
 & \quad - \{K,H^{(2)} \} (w) -\{K, H^{(3)}\} (w)  -\{K, H^{(4)}\} (w) - \ldots \\
 & \quad + \frac{1}{2}\{K,\{K,H^{(2)} \}\} (w) +\frac{1}{2} \{K,\{K,H^{(3)}\}\} (w) + \ldots  
\end{align*} 
If $K$ is homogeneous of degree $m$ and $H^{(n)}$ is homogeneous of degree $n$, then $\{K,H^{(n)} \}$ is of degree $m+n-2$. 
Thus, if we construct an auxiliary Hamiltonian $K=K^{(3)}$ that is homogeneous of degree $3$ and satisfies the relation 
\begin{equation}\label{cohomological-relation}
   H^{(3)}- \{K^{(3)},H^{(2)} \}  = 0 \,,
\end{equation}
we will have eliminated all cubic terms from the transformed Hamiltonian $H'$.
We can repeat this process at each order.

\subsection{Third-order Birkhoff normal form}

To find the auxiliary Hamiltonian $K^{(3)}$ from \eqref{cohomological-relation}, 
we use the following diagonal property of the coadjoint operator ${\rm coad}_{H^{(2)}} := \{\cdot, H^{(2)}\}$ when applied to monomial terms \cite{CS16}.
For example,
taking 
$
\mathcal{I} := \int z_1 z_2 \bar z_{-3} \delta_{123} d{\rm k}_{123},
$
we have 
\begin{equation} \label{ad-H2}
\{ \mathcal{I}, H^{(2)}\}  =  
i \int (\omega_1 + \omega_2 - \omega_3)   z_1 z_2 \bar z_{-3}  \delta_{123} d{\rm k}_{123} \,,
\end{equation}
where $\omega_j := \omega_{\rk_j}$. We will employ such notations throughout the paper when no confusion should arise. 

\begin{proposition} 
\cite{CS16} The cohomological equation \eqref{cohomological-relation} has a unique solution $K^{(3)}$ which, in complex symplectic coordinates, is 
\begin{align} \label{K3-fourier-z}
   K^{(3)} = &~ \frac{1}{8i{\pi}\sqrt{2}} \int
      ({\rm k}_1 \cdot {\rm k}_3 + |{\rm k}_1| |{\rm k}_3|)  \Big[  \frac{a_1a_3}{a_2} 
     \frac{z_1 z_2 z_3- \bar z_{-1} \bar z_{-2} \bar z_{-3} }
     {\omega_1 + \omega_2+ \omega_3} \\
     \nonumber
  \\ \nonumber
   & -   
    2 \frac{a_1a_3}{a_2} 
       \frac{z_1 z_2 \bar z_{-3}- \bar z_{-1} \bar z_{-2} z_3}{\omega_1+ \omega_2 -\omega_3}   
    +   
    \frac{a_1a_3}{a_2} 
      \frac{ z_1\bar z_{-2}  z_3-  \bar z_{-1} z_2 \bar z_{-3} }{\omega_1 -\omega_2 +\omega_3}  \Big] \delta_{123} d{\rm k}_{123} \,.
   \nonumber
\end{align}
Alternatively, in the $(\eta,\xi)$ variables, $K^{(3)}$  has the form
\begin{eqnarray}
\label{K3-fourier}
K^{(3)} &=& \frac{1}{2\pi} \int 
\frac{{\rm k}_1 \cdot {\rm k}_3 + |{\rm k}_1| |{\rm k}_3|}{d_{123}} 
\Big[ g^2 (|{\rm k}_1|-|{\rm k}_2|-|{\rm k}_3|) \eta_1 \eta_2 \xi_3 + 
g^2 |{\rm k}_2| \eta_1 \xi_2 \eta_3  \\
& \phantom{t} & 
\qquad + \frac{g}{2} |{\rm k}_2|
(|{\rm k}_1| - |{\rm k}_2| + |{\rm k}_3|)  
\xi_1 \xi_2 \xi_3 \Big] \delta_{123} d{\rm k}_{123} \,,
\nonumber
\end{eqnarray}
where the  denominator $d_{123}$ is given by 
\begin{align*}\label{d-123}
d_{123} &= d(\omega_1,\omega_2,\omega_3)= ( \omega_1 +\omega_2+\omega_3)
  (\omega_1 +\omega_2-\omega_3)( \omega_1 -\omega_2+\omega_3)( \omega_1 -\omega_2-\omega_3) \nonumber \\
& =g^2 \left( 
|{\rm k}_1|^2 + |{\rm k}_2|^2 + |{\rm k}_3|^2 - 
2|{\rm k}_1||{\rm k}_2| - 2|{\rm k}_2||{\rm k}_3| - 
2|{\rm k}_1||{\rm k}_3| \right) \nonumber \\
& =- 2 g^2 \left( {\rm k}_1 \cdot {\rm k}_2 + 
{\rm k}_2 \cdot {\rm k}_3 + 
{\rm k}_3 \cdot {\rm k}_1+ |{\rm k}_1| |{\rm k}_2| + |{\rm k}_2| |{\rm k}_3|  + |{\rm k}_3| |{\rm k}_1|\right) \,.
\end{align*}
\end{proposition}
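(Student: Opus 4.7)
My plan is to solve the cohomological equation $H^{(3)} - \{K^{(3)}, H^{(2)}\} = 0$ term by term in the complex symplectic coordinates $(z_\rk, \bar z_{-\rk})$, where the coadjoint operator ${\rm coad}_{H^{(2)}} = \{\cdot, H^{(2)}\}$ is diagonal on cubic monomials by formula \eqref{ad-H2} and its analogues. First I would expand the product $(z_1 - \bar z_{-1})(z_2 + \bar z_{-2})(z_3 - \bar z_{-3})$ in \eqref{h3} to write $H^{(3)}$ as a sum over the eight monomial types indexed by the sign assignments $(\epsilon_1, \epsilon_2, \epsilon_3) \in \{+,-\}^3$ (where $+$ stands for $z$ and $-$ for $\bar z_{-}$), with kernel $\pm ({\rm k}_1 \cdot {\rm k}_3 + |{\rm k}_1||{\rm k}_3|) a_1 a_3 / a_2$ (times appropriate signs) under the constraint $\delta_{123}$.

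Next I would make an ansatz for $K^{(3)}$ with exactly the same monomial structure and unknown kernels $c_{\epsilon_1\epsilon_2\epsilon_3}({\rm k}_1,{\rm k}_2,{\rm k}_3)$. Applying \eqref{ad-H2} to each type produces a factor $i(\epsilon_1 \omega_1 + \epsilon_2 \omega_2 + \epsilon_3 \omega_3)$, so matching $H^{(3)} = \{K^{(3)}, H^{(2)}\}$ reduces to purely algebraic division. Since the deep-water dispersion $\omega_\rk = \sqrt{g|\rk|}$ is a strictly increasing concave function of $|\rk|$, the paper has already observed that there are no cubic resonances, so every denominator $\omega_1 + \omega_2 + \omega_3$, $\omega_1 + \omega_2 - \omega_3$, $\omega_1 - \omega_2 + \omega_3$ is bounded away from zero on the surface $\rk_1+\rk_2+\rk_3 = 0$. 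This yields the coefficients, and after grouping together each monomial with its complex conjugate (noting that $K^{(3)}$ must be real, and using the symmetry of the kernel under ${\rm k}_1 \leftrightarrow {\rm k}_3$ to fold pairs of terms), the formula \eqref{K3-fourier-z} drops out directly. Uniqueness is automatic: any other solution $\widetilde K^{(3)}$ would satisfy $\{K^{(3)} - \widetilde K^{(3)}, H^{(2)}\} = 0$, and diagonality plus absence of cubic resonances forces $K^{(3)} - \widetilde K^{(3)} = 0$.

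For the second formula \eqref{K3-fourier}, I would substitute the inverse of \eqref{zeta-variable-fourier}, namely $z_j = \tfrac{1}{\sqrt 2}(a_j \eta_j + i a_j^{-1} \xi_j)$ and $\bar z_{-j} = \tfrac{1}{\sqrt 2}(a_j \eta_j - i a_j^{-1} \xi_j)$, back into \eqref{K3-fourier-z} and bring the three distinct denominators over a common one. Using $\omega_j = \sqrt{g|\rk_j|} = g/a_j^2$, the product
\[
(\omega_1+\omega_2+\omega_3)(\omega_1+\omega_2-\omega_3)(\omega_1-\omega_2+\omega_3)(\omega_1-\omega_2-\omega_3)
\]
is precisely $d_{123}$, and the extra factor $\omega_1-\omega_2-\omega_3$ produced by completion of the denominator combines with the prefactors $a_j$ to reproduce the structure displayed in \eqref{K3-fourier}. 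Symmetrising the integrand under $\rk_1 \leftrightarrow \rk_3$ (and under any permutation of the three momenta that the kernel supports) lets one identify the three scalar coefficients in front of $\eta_1\eta_2\xi_3$, $\eta_1\xi_2\eta_3$ and $\xi_1\xi_2\xi_3$.

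The main obstacle is not conceptual but organisational: the careful bookkeeping required to (i) expand $H^{(3)}$ into its eight monomial pieces with the correct signs, (ii) symmetrise kernels so that only three independent denominators survive in \eqref{K3-fourier-z}, and (iii) execute the common-denominator computation that collapses three rational expressions in $\omega_1, \omega_2, \omega_3$ into the single polynomial $d_{123}$ while simultaneously resolving the $a_j^{\pm 1}$ prefactors into the clean polynomial coefficients $|\rk_1|-|\rk_2|-|\rk_3|$, $|\rk_2|$, and $|\rk_2|(|\rk_1|-|\rk_2|+|\rk_3|)/2$ appearing in \eqref{K3-fourier}. Everything else is a direct application of the diagonal property \eqref{ad-H2} and the non-resonance of the deep-water dispersion relation.
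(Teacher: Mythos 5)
Your proposal follows essentially the same route as the paper's (very terse) proof: expand $H^{(3)}$ into cubic monomials in $z_{\rk}$, $\bar z_{-\rk}$, invert the diagonal coadjoint operator via \eqref{ad-H2} using the absence of resonant triads (which also gives uniqueness), and then substitute \eqref{zeta-variable-fourier} and clear denominators to reach the $(\eta,\xi)$ form with the common denominator $d_{123}$. The only imprecision is your claim that the denominators are bounded away from zero on $\{{\rm k}_1+{\rm k}_2+{\rm k}_3=0\}$ --- they in fact degenerate as any ${\rm k}_j\to 0$, and the paper's subsequent Remark explains that the smallness is compensated by the numerator so that all integrals remain convergent.
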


\begin{proof}
We write $H^{(3)}$ given in (\ref{h3})  as a linear combination of third-order monomials in $z_\rk$ and $\bar{z}_{-\rk}$.
Identity \eqref{ad-H2} allows us to solve the cohomological equation \eqref{cohomological-relation}. We identify terms to obtain \eqref{K3-fourier-z}. 
The expression \eqref{K3-fourier} is derived from substitution of the relations \eqref{zeta-variable-fourier}.
 It will be useful later for the reconstruction of the surface elevation.
Notice that  the denominator $d_{123}$ is symmetric with respect to any permutation of the indices.
\end{proof}

\begin{remark}
The absence of resonant triads implies that the denominators do not vanish if none of the wavenumbers ${\rm k}_1$, ${\rm k}_2$, ${\rm k}_3$ vanish. 
However, in these integrals,
any of the ${\rm k}_j$ could be arbitrarily close to 0. In such a case, the denominator becomes small   but is compensated by a small numerator. For example, if $|{\rm k}_1| \sim |{\rm k}_2|$ are $\calO(1)$ while $|{\rm k}_3|$ is small, the denominator 
$d_{123} \sim  |{\rm k}_3|$ while, in the numerator, the term $|{\rm k}_1 \cdot {\rm k}_3 + |{\rm k}_1| |{\rm k}_3|| \sim |{\rm k}_3|$. 
On the other hand, if $|{\rm k}_1| \sim |{\rm k}_3|$ are $\calO(1)$ while $|{\rm k}_2|$ is small, the denominator 
$d_{123} \sim  |{\rm k}_2|$ and we can use that $| |{\rm k}_1|- |{\rm k}_3| | \le |{\rm k}_1+{\rm k}_3|= |{\rm k}_2|$ to compensate it. In all cases, all integrals are convergent.
\end{remark}


The third-order normal form defining the new coordinates is obtained as the solution map at $s=0$ of 
the Hamiltonian flow 
\begin{equation*}
\label{NFT-flow-fourier-initial}
\partial_s   
\begin{pmatrix}
\eta \\ \xi
\end{pmatrix} = 
\begin{pmatrix}
0 & 1 \\
-1 & 0
\end{pmatrix}
\begin{pmatrix}
\partial_{\eta}K^{(3)} \\
\partial_{\xi} K^{(3)}
\end{pmatrix} \,,
\end{equation*}
with initial condition at $s= -1$ being the original variables $(\eta,\xi)$.  Equivalently, in Fourier coordinates,
\begin{align}             \label{Eq:Hamiltonflow-Fourier}\               
   \partial_s \eta_{-\rk} = \partial_{\xi_\rk}
   K^{(3)} \,,  \quad 
   \partial_s \xi_{ -\rk} 
     =  - \partial_{\eta_\rk} K^{(3)} \,,
     \end{align}
where
\begin{align*}
 \partial_{\xi_\rk}   K^{(3)} & = \frac{1}{2\pi} \int   P_{12\rk}    \eta_1 \eta_2 \delta_{1\rk2} d{\rm k}_{12}
+ Q_{\rk23} \xi_2 \xi_3 \delta_{2\rk3} d{\rm k}_{23} \,, \\ 
 \partial_{\eta_\rk}   K^{(3)} & = \frac{1}{2\pi} \int   R_{\rk23}    \eta_2 \xi_3 \delta_{\rk23} d{\rm k}_{23} \,, 
\end{align*}
by virtue of \eqref{K3-fourier}. The coefficients in the above integrals are given by
\begin{eqnarray*}
P_{12\rk} & = & -\frac{1}{2} |\rk| + \frac{g^2}{d_{12\rk}}
({\rm k}_1 \cdot \rk + |{\rm k}_1| |\rk|)  (|{\rm k}_1|-|{\rm k}_2|-  3|\rk|) \,, \\ 
Q_{\rk23} & = & -\frac{1}{4g} \big[ 2( \rk \cdot {\rm k}_3 + |\rk| |{\rm k}_3|)  +( {\rm k}_2 \cdot {\rm k}_3 + |{\rm k}_2| |{\rm k}_3|) \big] \cr
& & -\frac{g}{2d_{\rk23}}   \big[ 2 (\rk \cdot {\rm k}_3 + |\rk| |{\rm k}_3|)^2
  + ({\rm k}_2 \cdot {\rm k}_3 + |{\rm k}_2| |{\rm k}_3|)^2 \big] \,, \\ 
 R_{\rk23} & = & -|{\rm k}_3| + \frac{g^2}{d_{\rk23}}  \Big[ (\rk \cdot {\rm k}_3 + |\rk| |{\rm k}_3|) (|\rk|-|{\rm k}_2|-  3|{\rm k}_3|)  \cr
& & + ({\rm k}_2 \cdot {\rm k}_3 + |{\rm k}_2| |{\rm k}_3|)  (|{\rm k}_2|-|\rk|-  3|{\rm k}_3|)  \Big] \,. 
\end{eqnarray*}

\section{Reduced Hamiltonian}

After applying the third-order normal form transformation, the new Hamiltonian $H'$ becomes (with the prime dropped)
\begin{equation*}
\label{new-hamiltonian}
\begin{aligned}
H(w) & = H^{(2)}(w) + H^{(4)}(w) - \{K^{(3)}, H^{(3)}\}(w) + 
\frac{1}{2} \{K^{(3)}, \{K^{(3)}, H^{(2)}\}\}(w) + R^{(5)}\\
& = H^{(2)}(w) + H_+^{(4)}(w) + R^{(5)} \,,
\end{aligned}
\end{equation*}
where $R^{(5)}$ denotes all terms of order $5$ and higher, and $H^{(4)}_+$ is the new fourth-order term
\begin{equation}
\label{new-H4-formula}
H^{(4)}_+ = H^{(4)} - \frac{1}{2} \{K^{(3)}, H^{(3)}\} \,.
\end{equation}

\subsection{Fourth-order term $H_+^{(4)}$}

Let
\begin{equation}
\label{A-123}
S_{123} := ({\rm k}_1 \cdot {\rm k}_3 + |{\rm k}_1| |{\rm k}_3|) \frac{a_1 a_3}{a_2} \,, \quad  
A_{123} := \frac{1}{8 \pi \sqrt{2}} \left( S_{123} + S_{312} - S_{231} \right) \,.
\end{equation}
\begin{lemma}\label{Hamiltonians-in-z}
We have
\begin{equation}
\label{H3-in-z-2}
H^{(3)} = \int_{\reals^6}  A_{123} (z_1 z_2 z_3 + \bar z_{-1} \bar z_{-2} \bar z_{-3} -
z_1 z_2 \bar z_{-3} - \bar z_{-1} \bar z_{-2} z_3)  \delta_{123} d{\rm k}_{123} \,,
\end{equation}
\begin{equation}
\label{K3-in-z-2}
K^{(3)} = 
 \frac{1}{i} \int_{\reals^6}  A_{123} \Big[ \frac{z_1 z_2 z_3 - \bar z_{-1} \bar z_{-2} \bar z_{-3}}{\omega_1 + \omega_2 + \omega_3} 
- \frac{z_1 z_2 \bar z_{-3} - \bar z_{-1} \bar z_{-2} z_3}{\omega_1 + \omega_2 - \omega_3}   \Big] \delta_{123} d{\rm k}_{123} \,.
\end{equation}
\end{lemma}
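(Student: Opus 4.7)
Both identities should follow by expanding \eqref{h3} and \eqref{K3-fourier-z} into elementary monomials in $z_j$ and $\bar z_{-j}$, and then using the invariance of the integrals under dummy relabelings of $(\rk_1,\rk_2,\rk_3)$ to merge equivalent monomials into the symmetrized form with coefficient $A_{123}$. The structural input is that $S_{ijk}$ defined in \eqref{A-123} depends only on its middle index, since $(\rk_i\cdot \rk_k + |\rk_i||\rk_k|)\, a_i a_k$ is symmetric in $i$ and $k$; in particular $S_{ijk}=S_{kji}$, $S_{132}=S_{231}$, and $S_{213}=S_{312}$. The permutation invariance of $\delta_{123}$ is used throughout.

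\textbf{For $H^{(3)}$.} First I would expand the cubic product $(z_1-\bar z_{-1})(z_2+\bar z_{-2})(z_3-\bar z_{-3})$ in \eqref{h3} into its eight signed monomials, sorted by the number of $\bar z$ factors. The substitution $\rk_1\leftrightarrow\rk_3$, which leaves $S_{123}$ and $\delta_{123}$ invariant, identifies each ``side-$\bar z$'' monomial at position $1$ with its partner at position $3$: for instance $\bar z_{-1}z_2 z_3 \leftrightarrow z_1 z_2\bar z_{-3}$ and $z_1\bar z_{-2}\bar z_{-3}\leftrightarrow \bar z_{-1}\bar z_{-2}z_3$. This collapses four of the eight terms into the canonical monomials $z_1 z_2\bar z_{-3}$ and $\bar z_{-1}\bar z_{-2}z_3$ with combined coefficient $-2S_{123}$ each. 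The two ``middle-$\bar z$'' monomials $z_1\bar z_{-2}z_3$ and $\bar z_{-1}z_2\bar z_{-3}$ are then brought to the same canonical form by $\rk_2\leftrightarrow\rk_3$, which replaces $S_{123}$ with $S_{132}$. The total coefficient of $z_1 z_2\bar z_{-3}$ becomes $-2S_{123}+S_{132}$; symmetrizing over $\rk_1\leftrightarrow\rk_2$ (a symmetry of both the monomial and $\delta_{123}$) averages this to $-(S_{123}+S_{312}-S_{231})=-8\pi\sqrt{2}\, A_{123}$, giving the coefficient $-A_{123}$ after division by $8\pi\sqrt{2}$. For the fully symmetric monomials $z_1 z_2 z_3$ and $\bar z_{-1}\bar z_{-2}\bar z_{-3}$, both $S_{123}$ and $8\pi\sqrt{2}\, A_{123}$ share the same $S_3$-symmetric part $(S_{123}+S_{312}+S_{231})/3$, so their integrals agree. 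This proves \eqref{H3-in-z-2}.

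\textbf{For $K^{(3)}$, and the obstacle.} Starting from \eqref{K3-fourier-z}, the first bracket with denominator $\omega_1+\omega_2+\omega_3$ carries fully $S_3$-symmetric monomials and denominator, so the same $S_3$-averaging argument replaces $S_{123}$ by $8\pi\sqrt{2}\, A_{123}$. For the other two brackets, the substitution $\rk_2\leftrightarrow\rk_3$ converts the third bracket $(z_1\bar z_{-2}z_3-\bar z_{-1}z_2\bar z_{-3})/(\omega_1-\omega_2+\omega_3)$ into the same form as the second bracket, with coefficient $S_{132}$. Combining with the pre-existing coefficient $-2S_{123}$ and symmetrizing under $\rk_1\leftrightarrow\rk_2$ produces the coefficient $-8\pi\sqrt{2}\, A_{123}$ for the numerator $z_1 z_2\bar z_{-3}-\bar z_{-1}\bar z_{-2}z_3$ over $\omega_1+\omega_2-\omega_3$; dividing by $8i\pi\sqrt{2}$ yields \eqref{K3-in-z-2}. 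The main obstacle here is pure bookkeeping: keeping track of the eight signed monomials and confirming that each relabeling sends $S_{123}$ to the intended $S_{ijk}$. Once the symmetries listed in the first paragraph are in hand, both identities reduce to short algebraic verifications.
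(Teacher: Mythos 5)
Your proposal is correct and follows essentially the same route as the paper: expand into signed monomials and use dummy-index relabelings under $\delta_{123}$ (exploiting $S_{ijk}=S_{kji}$) to collapse everything onto the coefficient $A_{123}$; your transposition-plus-symmetrization bookkeeping is equivalent to the paper's cyclic rearrangements $(1,2,3)\to(2,3,1)$ and $(1,2,3)\to(3,1,2)$. You also carry out the $K^{(3)}$ identity explicitly, which the paper leaves as an implicit repetition of the $H^{(3)}$ argument.
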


\begin{proof} 
Expanding the brackets in \eqref{h3}, we find 
\begin{align} \label{h3-expanded}
H^{(3)} & = \frac{1}{8\pi \sqrt{2}} \int S_{123} \Big[( z_1 z_2 z_3 + \bar z_{-1} \bar z_{-2} \bar z_{-3}) 
 -   ( z_1 z_2 \bar z_{-3} + \bar z_{-1} \bar z_{-2} z_3 ) \nonumber\\
& \quad + (z_1 \bar z_{-2} z_3 + \bar z_{-1} z_2 \bar z_{-3})
 -  ( z_1 \bar z_{-2} \bar z_{-3} + \bar z_{-1} z_2 z_3 ) \Big]\delta_{123} d{\rm k}_{123} \,.
\end{align}
By symmetry, the first term on the RHS of (\ref{h3-expanded}) identifies to the two  terms of  (\ref{H3-in-z-2}). 
Applying  the index rearrangement $(1,2,3)\to (2,3,1)$ to the third term of (\ref{h3-expanded}) and $(1,2,3)\to(3,1,2)$ to its fourth term, 
the last three terms on the RHS of (\ref{h3-expanded}) reduce to the last two terms of (\ref{H3-in-z-2}). 
\end{proof}
The modified fourth-order term $H_+^{(4)}$  given in (\ref{new-H4-formula}) is  the sum of integrals with all possible  combinations of 
fourth-order monomials in   $z_\rk$ and $\bar z_{-\rk}$, that is 
\begin{equation}
\label{new-H4-general-form}
\begin{aligned}
H_+^{(4)} = &
\int  \Big [T^{+} z_1 z_2 z_3 z_4  + 
 T^\pm z_1 z_2 z_3 \bar z_{-4}   + 
 T^{+}_{-} z_1 z_2 \bar z_{-3} \bar z_{-4}  + 
T^\mp z_1 \bar z_{-2} \bar z_{-3} \bar z_{-4} \\
 & \quad + T^{-} \bar z_{-1} \bar z_{-2} \bar z_{-3} \bar z_{-4} \Big] \delta_{1234} d{\rm k}_{1234} \,.
\end{aligned}
\end{equation}
In view of the forthcoming modulational Ansatz and homogenization process,  it is however not necessary to calculate
explicitly all the coefficients above. We only need the coefficient $T^+_-$ of monomial $z_1 z_2 \bar z_{-3} \bar z_{-4}$.
We thus denote by
\begin{equation*} \label{decompH4}
 H_R^{(4)} = \int T_1 z_1 z_2 \bar z_{-3} \bar z_{-4} \delta_{1234} d{\rm k}_{1234} \,, \quad
  \{K^{(3)}, H^{(3)}\}_R = \int T_2 z_1 z_2 \bar z_{-3} \bar z_{-4} \delta_{1234} d{\rm k}_{1234} \,,
\end{equation*}
 the contributions from these monomials to $H^{(4)}$  and $\{K^{(3)}, H^{(3)}\}$ respectively, and
\begin{equation}
\label{fourth-order-H-R}
 H_{+R}^{(4)} =  \int T^{+}_{-} z_1 z_2 \bar z_{-3} \bar z_{-4} \delta_{1234} d{\rm k}_{1234} \,,
 \end{equation}
 with  $H_{+R}^{(4)} =  H_R^{(4)} -\frac{1}{2} \{K^{(3)}, H^{(3)}\}_R$ and
$T^+_- = T_1-\frac{1}{2} T_2$.

\begin{proposition} \label{T1}
The coefficient $T_1$ has the form
\begin{eqnarray*}
\label{I-term0}
&T_1 = 
      \frac{\sqrt[4]{|{\rm k}_1||{\rm k}_2||{\rm k}_3||{\rm k}_4|}}{64 \pi^2}   \Big[\sqrt{|{\rm k}_1||{\rm k}_2|} \big(|{\rm k}_1|+|{\rm k}_2|-2|{\rm k}_2-{\rm k}_3|\big) 
  + \sqrt{|{\rm k}_3||{\rm k}_4|}\big( |{\rm k}_3|+|{\rm k}_4|-2|{\rm k}_1-{\rm k}_4| \big) \\
&-2\sqrt{|{\rm k}_1||{\rm k}_4|} \big( 2|{\rm k}_1|+2|{\rm k}_4|-|{\rm k}_1+{\rm k}_2|-|{\rm k}_3+{\rm k}_4|- |{\rm k}_1-{\rm k}_3| - |{\rm k}_2-{\rm k}_4| \big) \Big] \,.
\end{eqnarray*}
\end{proposition}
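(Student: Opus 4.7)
The plan is a direct computation at the level of Fourier variables. Starting from the expression (\ref{h-fourier}) for $H^{(4)}$ in the $(\eta,\xi)$ variables, I would substitute the inverse of (\ref{zeta-variable-fourier}),
\begin{equation*}
\eta_\rk = \frac{1}{a_\rk\sqrt{2}}\bigl(z_\rk+\bar z_{-\rk}\bigr),\qquad \xi_\rk = -\frac{ia_\rk}{\sqrt{2}}\bigl(z_\rk-\bar z_{-\rk}\bigr),
\end{equation*}
into $\xi_1\eta_2\eta_3\xi_4$, expand the resulting quartic product in $z,\bar z$, and identify the coefficient of $z_1 z_2\bar z_{-3}\bar z_{-4}$ after relabeling the integration variables.

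Multiplying the four substituted factors yields an overall prefactor $-\tfrac{a_1 a_4}{4 a_2 a_3}$ in front of $(z_1-\bar z_{-1})(z_2+\bar z_{-2})(z_3+\bar z_{-3})(z_4-\bar z_{-4})$, so together with the $-1/(16\pi^2)$ in (\ref{h-fourier}) the full coefficient becomes $\tfrac{1}{64\pi^2}\tfrac{a_1 a_4}{a_2 a_3}M_{1234}$, where $M_{1234}=|{\rm k}_1||{\rm k}_4|(|{\rm k}_1|+|{\rm k}_4|-2|{\rm k}_3+{\rm k}_4|)$. Since $a_\rk=(g/|\rk|)^{1/4}$, the ratio $\tfrac{a_1 a_4}{a_2 a_3}$ equals $\sqrt[4]{|{\rm k}_1||{\rm k}_2||{\rm k}_3||{\rm k}_4|}/\sqrt{|{\rm k}_1||{\rm k}_4|}$, which explains the common fourth-root factor in the statement; after relabeling, each $a$-ratio will take the same form with $\sqrt{|{\rm k}_1||{\rm k}_4|}$ replaced by one of $\sqrt{|{\rm k}_1||{\rm k}_2|}$, $\sqrt{|{\rm k}_3||{\rm k}_4|}$, $\sqrt{|{\rm k}_2||{\rm k}_3|}$, or $\sqrt{|{\rm k}_1||{\rm k}_4|}$, depending on the pair of $z$-positions.

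Out of the sixteen monomials in the quartic expansion, only the six having exactly two $z$'s and two $\bar z$'s contribute to $T_1$; for each such monomial I would rename the integration variables so that the $z$-positions become $\tilde 1,\tilde 2$ and the $\bar z$-positions become $\tilde 3,\tilde 4$. The delta $\delta_{1234}$ is preserved by any such permutation, while the kernel $\tfrac{a_1 a_4}{a_2 a_3}M_{1234}$ is transformed accordingly; the six resulting contributions carry signs $\pm 1$ inherited from the $(z_1-\bar z_{-1})$ and $(z_4-\bar z_{-4})$ factors. Summing them and then invoking the momentum-conservation identities coming from ${\rm k}_1+{\rm k}_2+{\rm k}_3+{\rm k}_4=0$ (which in particular give $|{\rm k}_1+{\rm k}_2|=|{\rm k}_3+{\rm k}_4|$ and $|{\rm k}_1+{\rm k}_3|=|{\rm k}_2+{\rm k}_4|$), together with the intrinsic $1\leftrightarrow 2$ and $3\leftrightarrow 4$ symmetries of the monomial $z_1 z_2\bar z_{-3}\bar z_{-4}$, allows one to regroup the six contributions into the three families in the proposition, organized by the radical prefactors $\sqrt{|{\rm k}_1||{\rm k}_2|}$, $\sqrt{|{\rm k}_3||{\rm k}_4|}$, and $\sqrt{|{\rm k}_1||{\rm k}_4|}$. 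The main obstacle is not conceptual but combinatorial: one must track the six relabelings, their signs, and compatible uses of the delta-based identities in order to match the specific combination of $|{\rm k}_i\pm{\rm k}_j|$-type terms stated in the proposition.
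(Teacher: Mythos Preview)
Your approach is correct and essentially identical to the paper's own proof: write $H^{(4)}$ in the complex symplectic coordinates so that it has the form $\int D_{1234}(z_1-\bar z_{-1})(z_2+\bar z_{-2})(z_3+\bar z_{-3})(z_4-\bar z_{-4})\,\delta_{1234}\,d{\rm k}_{1234}$, extract the six monomials of type $zz\bar z\bar z$, relabel each to the canonical form $z_1 z_2\bar z_{-3}\bar z_{-4}$, and sum the resulting coefficients. The paper is slightly more terse in the last step: it fixes one specific relabeling per monomial so that the sum reads $-D_{1234}-D_{4321}-D_{1324}-D_{4231}+D_{1432}+D_{3214}$ and then simply identifies this with the stated $T_1$, whereas you describe the same endpoint as reached via the delta-constraint identities and the $1\leftrightarrow 2$, $3\leftrightarrow 4$ symmetries of the monomial.
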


\begin{proof}
Write  $H^{(4)}$ given in  (\ref{h-fourier}) in terms of complex symplectic coordinates
$$
H^{(4)} = \int D_{1234} (z_1 - \bar z_{-1}) (z_2 + \bar z_{-2}) (z_3 + \bar z_{-3}) (z_4 - \bar z_{-4}) \delta_{1234} d{\rm k}_{1234} \,,
$$
where 
\[
D_{1234} = \frac{\sqrt[4]{|{\rm k}_1||{\rm k}_2||{\rm k}_3||{\rm k}_4|}}{64 \pi^2} \sqrt{|{\rm k}_1||{\rm k}_4|} (|{\rm k}_1| + |{\rm k}_4| - 2|{\rm k}_3+{\rm k}_4|) \,.
\]
 Extracting terms of type ``$zz\bar z\bar z$", 
we have
$$
\begin{aligned}
H^{(4)}_{R} & = \int D_{1234} ( -z_1 z_2 \bar z_{-3} \bar z_{-4} - \bar z_{-1} \bar z_{-2} z_3 z_4
 - z_1 \bar z_{-2}  z_{3} \bar z_{-4} - \bar z_{-1} z_2 \bar z_{-3} z_{4}\\
& \qquad + z_1 \bar z_{-2} \bar z_{-3} z_{4} + \bar z_{-1} z_2 z_{3} \bar z_{-4}) \delta_{1234} d{\rm k}_{1234} \,.
 \end{aligned}
$$
This integral  can alternatively be written after index rearrangements as
{\small{
\[
H^{(4)}_{R} = \int (-D_{1234} - D_{4321} - D_{1324} - D_{4231} + D_{1432} + D_{3214}) z_1 z_2 \bar z_{-3} \bar z_{-4} \delta_{1234} d{\rm k}_{1234} \,, %
\]
}}
with the  coefficient above equal to  $T_1$ as given in Proposition \ref{T1}.
\end{proof}

\begin{proposition}
\label{lemma-T2-coeff}
 Denoting
 $\ell_{{\rm k}_1}^{ {\rm k}_2} := \frac{|{\rm k}_1||{\rm k}_2|+{\rm k}_1 \cdot {\rm k}_2}{\sqrt{|{\rm k}_1||{\rm k}_2|}}$,
we have 
$ T_2 = {\rm I} + {\rm II} + {\rm III}$ with
\begin{eqnarray}
\label{I-term-new}
{\rm I} & = & \frac{g^{1/2}}{128 \pi^2}  \big(|{\rm k}_1||{\rm k}_2||{\rm k}_3||{\rm k}_4| |{\rm k}_1+{\rm k}_2| |{\rm k}_3+{\rm k}_4|\big)^{1/4}  
\big(\ell_{{\rm k}_1}^{{\rm k}_2} + \ell_{{\rm k}_1+{\rm k}_2}^{-{\rm k}_1} + \ell_{{\rm k}_1+{\rm k}_2}^{-{\rm k}_2}\big) \nonumber \\
& \phantom{t} &
 \times \big(\ell_{{\rm k}_3}^{{\rm k}_4} + \ell_{{\rm k}_3+{\rm k}_4}^{-{\rm k}_3} + \ell_{{\rm k}_3+{\rm k}_4}^{-{\rm k}_4}\big)  
\Big( \frac{1}{\omega_{{\rm k}_1}+ \omega_{{\rm k}_2}+ \omega_{{\rm k}_1+{\rm k}_2}} +\frac{1}{\omega_{{\rm k}_3}+ \omega_{{\rm k}_4}+ \omega_{{\rm k}_3+{\rm k}_4}} \Big) \,,
\end{eqnarray}
\begin{eqnarray}
\label{II-term-new}
{\rm II} & = & \frac{g^{1/2}}{32 \pi^2} (|{\rm k}_1||{\rm k}_2||{\rm k}_3||{\rm k}_4| |{\rm k}_1+{\rm k}_3| |{\rm k}_2+{\rm k}_4|)^{1/4}   
(\ell_{{\rm k}_1}^{ {\rm k}_3} + \ell_{{\rm k}_1+{\rm k}_3}^{-{\rm k}_3} - \ell_{{\rm k}_1+{\rm k}_3}^{-{\rm k}_1})
\nonumber \\
& \phantom{t} &
\times (\ell_{{\rm k}_4}^{{\rm k}_2}   + \ell_{{\rm k}_2+{\rm k}_4}^{-{\rm k}_2} - \ell_{{\rm k}_2+{\rm k}_4}^{-{\rm k}_4})  
 \Big( \frac{1}{\omega_{{\rm k}_1}+ \omega_{{\rm k}_1+{\rm k}_3}- \omega_{{\rm k}_3}} +\frac{1}{\omega_{{\rm k}_4}+ \omega_{{\rm k}_4+{\rm k}_2} - \omega_{{\rm k}_2}} \Big) \,,
\end{eqnarray}
\begin{eqnarray}
\label{III-term}
{\rm III} & = & - \frac{g^{1/2}}{128 \pi^2}   \big(|{\rm k}_1||{\rm k}_2||{\rm k}_3||{\rm k}_4| |{\rm k}_1+{\rm k}_2| |{\rm k}_3+{\rm k}_4|)^{1/4} 
\big(\ell_{{\rm k}_1+{\rm k}_2}^{-{\rm k}_1} + \ell_{{\rm k}_1+{\rm k}_2}^{-{\rm k}_2} - \ell_{{\rm k}_1}^{{\rm k}_2}\big) \nonumber \\
& \phantom{t} &
\times \big(\ell_{{\rm k}_3+{\rm k}_4}^{-{\rm k}_3} + \ell_{{\rm k}_3+{\rm k}_4}^{-{\rm k}_4}- \ell_{{\rm k}_3}^{{\rm k}_4}\big) 
\Big( \frac{1}{\omega_{{\rm k}_1}+ \omega_{{\rm k}_2}- \omega_{{\rm k}_1+{\rm k}_2}} +\frac{1}{\omega_{{\rm k}_3}+ \omega_{{\rm k}_4}- \omega_{{\rm k}_3+{\rm k}_4}} \Big) \,.
\end{eqnarray}

\end{proposition}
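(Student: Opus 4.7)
The proof will be a direct, if lengthy, calculation of $\{K^{(3)},H^{(3)}\}$ in complex symplectic coordinates, extracting only the monomials of the form $z_1z_2\bar z_{-3}\bar z_{-4}$. I would begin from the compact expressions for $H^{(3)}$ and $K^{(3)}$ given in Lemma \ref{Hamiltonians-in-z} and compute the four variational derivatives $\partial_{z_{\rk}}H^{(3)}$, $\partial_{\bar z_{-\rk}}H^{(3)}$, $\partial_{z_{\rk}}K^{(3)}$, $\partial_{\bar z_{-\rk}}K^{(3)}$. Each derivative splits into pieces of the forms $zz$, $z\bar z$, $\bar z\bar z$, with coefficients obtained from the symmetrization of $A_{123}$ over the free indices; in the case of $K^{(3)}$ the coefficients additionally carry the denominators $(\omega_1+\omega_2+\omega_3)^{-1}$ or $(\omega_1+\omega_2-\omega_3)^{-1}$. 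Only a limited number of pairings yield $zz\bar z\bar z$ in the Poisson bracket \eqref{poisson-bracket-z}: either $zz\cdot\bar z\bar z$ or $z\bar z\cdot z\bar z$, and the analogous pairs from the second term.

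The plan is then to classify the surviving contributions by the shape of their resonance denominator under the conservation constraint $\rk_1+\rk_2=\rk_3+\rk_4$. Pairing the $zzz$ part of $H^{(3)}$ against the $\bar z_{-1}\bar z_{-2}\bar z_{-3}/(\omega_1+\omega_2+\omega_3)$ part of $K^{(3)}$ forces the intermediate momentum to equal $-(\rk_1+\rk_2)$ on one side and $-(\rk_3+\rk_4)$ on the other, producing denominators $\omega_{\rk_1}+\omega_{\rk_2}+\omega_{\rk_1+\rk_2}$ and $\omega_{\rk_3}+\omega_{\rk_4}+\omega_{\rk_3+\rk_4}$ — this yields contribution I. The mixed pairings $zz\bar z\cdot z\bar z\bar z$ split into two distinct momentum channels: one in which the intermediate wavenumber equals $\rk_1-\rk_3$ (and $\rk_4-\rk_2$), giving denominators $\omega_{\rk_1}-\omega_{\rk_3}+\omega_{\rk_1-\rk_3}$ and thus contribution II, and one in which it equals $\rk_1+\rk_2$ (and $\rk_3+\rk_4$) but with the opposite sign of the third frequency, giving $\omega_{\rk_1}+\omega_{\rk_2}-\omega_{\rk_1+\rk_2}$ and contribution III.

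Once the three channels are separated, the coefficients are rewritten using $a_j=(g/|\rk_j|)^{1/4}$, which converts each factor $S_{ijk}=(\rk_i\cdot\rk_k+|\rk_i||\rk_k|)\,a_ia_k/a_j$ into $g^{1/4}(|\rk_i||\rk_j||\rk_k|)^{1/4}\ell_{\rk_i}^{\rk_k}$. After symmetrizing the integrand over the exchanges $\rk_1\leftrightarrow\rk_2$ and $\rk_3\leftrightarrow\rk_4$, the three linear combinations of $S$-factors inside each $A_{\cdot\cdot\cdot}$ reassemble into precisely the symmetric brackets of $\ell$'s displayed in \eqref{I-term-new}, \eqref{II-term-new}, \eqref{III-term}; the common prefactor $g^{1/2}(|\rk_1||\rk_2||\rk_3||\rk_4||\rk_1+\rk_2||\rk_3+\rk_4|)^{1/4}$ in I and III (respectively with $\rk_1+\rk_3$ and $\rk_2+\rk_4$ in II) reflects that the intermediate leg appears as an argument of $\ell$.

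The main obstacle is purely combinatorial: keeping an accurate ledger of the many ways each monomial pairing contributes after relabeling of the dummy integration variables, so that the numerous pieces collapse into just three compact resonance classes with the correct signs and symmetric $\ell$-structures. In particular, one must verify that the antisymmetry of $\{\cdot,\cdot\}$ combines the two terms of \eqref{poisson-bracket-z} into the manifestly $(1,2)\leftrightarrow(3,4)$ symmetric sums $(\,\cdots\,)_{\rk_1\rk_2}+(\,\cdots\,)_{\rk_3\rk_4}$ visible in I–III, and that the interference between the three summands of $A_{123}$ produces exactly the cyclic combinations $\ell_{\rk_i}^{\rk_j}+\ell_{\rk_i+\rk_j}^{-\rk_i}+\ell_{\rk_i+\rk_j}^{-\rk_j}$ (for the sum-type resonances) and $\ell_{\rk_i}^{\rk_j}+\ell_{\rk_i+\rk_j}^{-\rk_j}-\ell_{\rk_i+\rk_j}^{-\rk_i}$ (for the difference-type resonance), with no leftover terms. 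The algebraic identity $\omega_\rk^2=g|\rk|$ is used only to simplify the three denominators once they are isolated; the rest is bookkeeping.
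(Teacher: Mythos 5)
Your plan is essentially the paper's own proof: the paper likewise computes $\{K^{(3)},H^{(3)}\}$ directly from the $A_{123}$-form of Lemma~\ref{Hamiltonians-in-z}, keeps only the $zz\bar z\bar z$ monomials, folds the conjugate pairings together by antisymmetry of the bracket to produce the $(1,2)\leftrightarrow(3,4)$-symmetric pairs of denominators, separates the same three resonance channels (the $zzz$--$\bar z\bar z\bar z$ pairing giving {\rm I}, and the two contraction patterns of the mixed pairing giving {\rm II} and {\rm III}), and converts $S_{ijk}=g^{1/4}(|\rk_i||\rk_j||\rk_k|)^{1/4}\ell_{\rk_i}^{\rk_k}$ exactly as you describe --- the only packaging difference is that the index-relabeling ledger is systematized in the identities of Lemma~\ref{lemma-poisson-first-two-lines}. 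One bookkeeping caution: the constraint attached to $z_1z_2\bar z_{-3}\bar z_{-4}$ is $\delta(\rk_1+\rk_2+\rk_3+\rk_4)=0$, not $\rk_1+\rk_2=\rk_3+\rk_4$, so the intermediate leg in your second channel is $\rk_1+\rk_3$ (not $\rk_1-\rk_3$), as required to match \eqref{II-term-new}.
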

The proof is a little more technical so we present it in Appendix \ref{appendix-T2-coeff}.

\subsection{Modulational Ansatz and homogenization}  

We are interested in solutions in the form of near-monochromatic waves with carrier wavenumber $\rk_0 = (k_0, 0)$, $k_0>0$. 
In Fourier space,  this corresponds to a narrow band approximation where $\eta_k$ and $\xi_k$
are  localized near $\rk_0$. 
Accordingly, when dealing with variables of type $z_{{\rm k}_j}$ and $\overline z_{-\rk_j}$ as shown in (\ref{new-H4-general-form}), we introduce the modulational Ansatz 
\begin{equation}
\label{kj-around-k0}
\pm {\rm k}_j = \rk_0 + \epsilon \, \chi_j \,, \quad 
{\rm where} \quad
\chi_j = (\lambda_j, \mu_j) \,, \quad
\frac{|\chi_j|}{k_0} = \calO (1) \,, \quad 
\epsilon \ll 1 \,,
\end{equation}
respectively.
A scale-separation lemma will show that, in this regime, all integrals in (\ref{new-H4-general-form}), except the third one, are arbitrarily small as
$\epsilon \to 0$. The third integral will be used later to derive a suitable approximation for the fourth-order term $H_+^{(4)}$.

We introduce the function $U$ defined in the Fourier space as 
\begin{equation}
\label{U-in-fourier}
U(\chi) = \epsilon \, z(\rk_0 + \epsilon \chi) \,, \quad \overline U(\chi) = \epsilon \, \overline z(\rk_0 + \epsilon \chi) \,,
\end{equation}
and we employ the notation $U_j = U_{\chi_j} := U(\chi_j)$ when no confusion should arise (again the time dependence is omitted).
The first integral in (\ref{new-H4-general-form}) has the form
\begin{eqnarray}
\nonumber
\mathcal{I}_1 & := & \int T_{{\rm k}_1, {\rm k}_2, {\rm k}_3, {\rm k}_4} z_1 z_2 z_3 z_4 \delta_{1234} d{\rm k}_{1234}\\
\label{homogenization-integral}
& = & \frac{1}{4\pi^2} \iint T_{{\rm k}_1, {\rm k}_2, {\rm k}_3, {\rm k}_4} z_1 z_2 z_3 z_4 e^{-i ({\rm k}_1+{\rm k}_2+{\rm k}_3+{\rm k}_4) \cdot \bx} d{\rm k}_{1234} d\bx \,,
\nonumber
\end{eqnarray}
where we have used  the identity
$
\delta_{1234} = \frac{1}{4\pi^2} \int
e^{- i ({\rm k}_1+{\rm k}_2+{\rm k}_3+{\rm k}_4) \cdot \bx} d\bx.
$
After the change of variables (\ref{kj-around-k0}),  
 $\mathcal{I}_1$ becomes
\begin{eqnarray}
\nonumber
\mathcal{I}_1 
\label{homogenization-integral-new}
& = & \frac{\epsilon^4}{4\pi^2}   \int e^{- 4 i k_0 x}    \int  \widetilde T_{\chi_1,\chi_2,\chi_3,\chi_4}     U_1 U_2 U_3 U_4  
e^{-i (\chi_1+\chi_2+\chi_3+\chi_4) \cdot (\epsilon \bx)} d\chi_{1234} d\bx \,,
\end{eqnarray}
where $\widetilde T_{\chi_1,\ldots,\chi_4} =T_{k_0+\epsilon \chi_1, \ldots, k_0+\epsilon \chi_4}$.
The inner integral above
$$ 
\int  \widetilde T_{\chi_1,\chi_2,\chi_3,\chi_4}     U_1 U_2 U_3 U_4  
e^{-i (\chi_1+\chi_2+\chi_3+\chi_4) \cdot (\epsilon \bx)} d\chi_{1234} \,, 
$$
identifies to a function $f(\epsilon \bx)$. Thus, 
$\mathcal{I}_1 =\ \frac{\epsilon^4}{4\pi^2}   \int e^{-4 i k_0 x}  f(\epsilon \bx) d\bx.$

The second integral in (\ref{new-H4-general-form}) has the form 
\begin{equation*}
\begin{aligned}
\mathcal{I}_2 &  
=\int T_{{\rm k}_1, {\rm k}_2, {\rm k}_3, {\rm k}_4} z_1 z_2 z_3 \bar z_{-4} \delta_{1234} d{\rm k}_{1234} \\
&=\frac{1}{4 \pi^2} \iint T_{{\rm k}_1, {\rm k}_2, {\rm k}_3, {\rm k}_4} z_1 z_2 z_3 \bar z_{-4} e^{-i ({\rm k}_1+{\rm k}_2+{\rm k}_3+{\rm k}_4) \cdot \bx} d{\rm k}_{1234} d\bx \,.
\end{aligned}
\end{equation*}
After the change of variables (\ref{kj-around-k0}),
 $\mathcal{I}_2$ becomes
\begin{eqnarray}
\nonumber
\quad\mathcal{I}_2
& = & \frac{\epsilon^4}{4\pi^2}   \int e^{-2 i k_0 x}    \int  \widetilde T_{\chi_1,\chi_2,\chi_3,\chi_4}     U_1 U_2 U_3 \overline U_4  
e^{- i (\chi_1+\chi_2+\chi_3-\chi_4) \cdot (\epsilon \bx)} d\chi_{1234} d\bx \,,
\end{eqnarray}
where $\widetilde T_{\chi_1,\ldots,\chi_4} =T_{k_0+\epsilon \chi_1, \ldots, -k_0-\epsilon \chi_4}$.
Again, the inner integral 
$$ 
\int  \widetilde T_{\chi_1,\chi_2,\chi_3,\chi_4}     U_1 U_2 U_3 \overline U_4  
e^{- i (\chi_1+\chi_2+\chi_3-\chi_4) \cdot (\epsilon \bx)} d\chi_{1234} \,, 
$$
identifies to a function $f(\epsilon \bx)$.  Consequently, 
$\mathcal{I}_2 =\ \frac{\epsilon^4}{4\pi^2}   \int e^{-2 i k_0 x}  f(\epsilon \bx) d\bx.$
To evaluate  the  integrals $\mathcal{I}_1$ and $\mathcal{I}_2$, 
we use the following  scale-separation lemma.
\begin{lemma}
\label{scale-separation-lemma}
Let $f$ be a real-valued function of Schwartz class and $\alpha \in \RR^2$ be a nonzero constant vector. Then, for all $N$, 
\[
\int e^{i \alpha \cdot \bx} f(\epsilon \bx) \, d\bx = \calO(\epsilon^N) \,.
\]
\end{lemma}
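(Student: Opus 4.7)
The plan is to exploit the rapid oscillation of $e^{i\alpha \cdot \bx}$ relative to the slow variation of $f(\epsilon \bx)$ through repeated integration by parts. Since $\alpha \ne 0$, introduce the first-order differential operator $L := \tfrac{1}{i|\alpha|^2}\,\alpha \cdot \nabla$, which satisfies the eigenfunction identity $L\, e^{i\alpha \cdot \bx} = e^{i\alpha \cdot \bx}$. Iterating, $L^N e^{i\alpha \cdot \bx} = e^{i\alpha \cdot \bx}$ for every $N \ge 0$, and because all Schwartz derivatives of $f$ decay rapidly, no boundary terms arise at infinity when we integrate by parts.

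Inserting this identity under the integral and transferring $N$ derivatives onto $f(\epsilon \bx)$ gives, via the chain rule $\nabla[f(\epsilon \bx)] = \epsilon (\nabla f)(\epsilon \bx)$, a clean factor of $\epsilon^N$:
\[
\int e^{i\alpha \cdot \bx}\, f(\epsilon \bx)\, d\bx \;=\; \frac{i^N \epsilon^N}{|\alpha|^{2N}} \int e^{i\alpha \cdot \bx}\, \bigl[(\alpha \cdot \nabla)^N f\bigr](\epsilon \bx)\, d\bx \,.
\]
Estimating the remaining integral in absolute value and rescaling $\by = \epsilon \bx$ yields the bound $|\alpha|^{-N}\, \epsilon^{N-2}\, \|(\alpha \cdot \nabla)^N f\|_{L^1(\RR^2)}$, which is finite by the Schwartz assumption. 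Since $N$ can be chosen arbitrarily large, the left-hand side is $\calO(\epsilon^M)$ for every $M$.

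The argument presents no genuine obstacle: the only ingredients are the non-vanishing of $\alpha$ (required to define $L$) and the Schwartz regularity of $f$ (required for integrability and absence of boundary contributions at each stage of the integration by parts). Equivalently, after the change of variables $\by = \epsilon \bx$, the integral becomes $\epsilon^{-2}\, 2\pi\, \widehat{f}(-\alpha/\epsilon)$, and the conclusion follows immediately from the rapid decay of the Fourier transform of a Schwartz function as $|\alpha|/\epsilon \to \infty$. Either route gives the stated rapid-decay estimate, which is exactly what is needed in the main text to discard the integrals $\mathcal{I}_1$ and $\mathcal{I}_2$ in favor of the non-oscillatory (``third'') contribution.
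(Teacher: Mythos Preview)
Your argument is correct. Your primary route---repeated integration by parts via the operator $L = (i|\alpha|^2)^{-1}\,\alpha\cdot\nabla$ satisfying $L\,e^{i\alpha\cdot\bx}=e^{i\alpha\cdot\bx}$---differs from the paper's, which instead uses Plancherel (equivalently, the substitution $\bx \mapsto \epsilon^{-1}\bx$) to identify the integral directly as $\epsilon^{-2}\,\overline{\widehat f(\alpha/\epsilon)}$ and then invokes the rapid decay of $\widehat f$ for Schwartz $f$. You in fact mention this Fourier-transform alternative in your closing paragraph, so the two proofs ultimately converge. The integration-by-parts route is marginally more elementary in that it bypasses the Fourier transform entirely, while the paper's version is a one-line computation. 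One minor slip: after rescaling, your prefactor should carry $|\alpha|^{-2N}$ rather than $|\alpha|^{-N}$, but since $\alpha$ is a fixed nonzero vector this has no bearing on the $\epsilon$-asymptotics.
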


\begin{proof}
By the Plancherel identity 
$$
\int e^{i \alpha \cdot \bx} \overline{ f(\epsilon \bx)} \, d\bx = \epsilon^{-2} \int \overline{\widehat f \left(\frac{\rk}{\epsilon} \right)} \delta(\rk - \alpha) \, d\rk = \epsilon^{-2} \overline{\widehat f \left(\frac{\alpha}{\epsilon} \right)} \,.
$$
Using that 
$|\widehat f(\rk)| \leq C_N(1+|\rk|^2)^{-\frac{N}{2}}$ for all $N$, we obtain 
$$
\left| \int e^{i \alpha \cdot \bx} \overline{ f(\epsilon \bx)} \, d\bx \right| \leq C_N \epsilon^{-2} \left| \widehat f \left(\frac{\alpha}{\epsilon} \right) \right| = \calO (\epsilon^{N-2}) \,.
$$
\end{proof}
Consequently,
$\mathcal{I}_1 = \calO(\epsilon^N)$,  $\mathcal{I}_2 = \calO(\epsilon^N)$
for all $N$, and all integrals in (\ref{new-H4-general-form}) except the third one are negligible in this modulational regime.
All terms with fast oscillations essentially homogenize to zero for $\epsilon \ll 1$.

\subsection{Quartic interactions in the modulational regime}

The homogenization step above allows us to omit the first, second, fourth and fifth integrals in (\ref{new-H4-general-form}) when approximating $H_+^{(4)}$ up to order $\mathcal{O}(\epsilon^2)$.
Then using the expression (\ref{fourth-order-H-R}) with $T_-^+ = T_1 -\frac{1}{2} T_2$ after the change of variables (\ref{kj-around-k0}), we obtain 
\begin{equation}
\label{H-plus-in-U}
H_+^{(4)} = \epsilon^2 \int \left( T_1 - \frac{1}{2} T_2\right) U_1 U_2 \bar U_3 \bar U_4 \delta_{1+2-3-4} d\chi_{1234} \,, 
\end{equation}
where $U_j$ is defined by (\ref{U-in-fourier}) and $\delta_{1+2-3-4} = \frac{1}{4 \pi^2} \int e^{-i (\chi_1 + \chi_2 - \chi_3 - \chi_4) \cdot \bx} d\bx$.
\begin{proposition}
\label{fourth-order-appr-proposition}
The  term $H^{(4)}_+$ in the reduced Hamiltonian has the form
\begin{eqnarray*}
H^{(4)}_+ & = & \frac{k_0^3 \epsilon^2}{8 \pi^2} \int
\left( 1 + \frac{3 \epsilon}{2k_0} (\lambda_2+\lambda_3) - 
\frac{\epsilon (\lambda_1-\lambda_3)^2}{k_0|\chi_1 - \chi_3|}
\right)
U_1 U_2 \bar U_3 \bar U_4 \delta_{1+2-3-4} d\chi_{1234}
+ \calO(\epsilon^4) \,.
\end{eqnarray*}
\end{proposition}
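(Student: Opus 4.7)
Starting from the expression (\ref{H-plus-in-U}) for $H_+^{(4)}$, the task is to expand the kernel $T_1-\frac{1}{2}T_2=T_1-\frac{1}{2}({\rm I}+{\rm II}+{\rm III})$ under the modulational Ansatz (\ref{kj-around-k0}) with $k_1=k_0+\epsilon\chi_1$, $k_2=k_0+\epsilon\chi_2$, $k_3=-k_0-\epsilon\chi_3$, $k_4=-k_0-\epsilon\chi_4$, so that the delta factor $\delta_{1+2-3-4}$ enforces $\chi_1+\chi_2=\chi_3+\chi_4$ (in particular $\lambda_1+\lambda_2=\lambda_3+\lambda_4$ and $\chi_2-\chi_4=\chi_3-\chi_1$). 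The contributions split into two qualitatively different groups: (a) the pieces $T_1$, ${\rm I}$ and ${\rm III}$, whose denominators (none or $\omega_{k_1}+\omega_{k_2}\pm\omega_{k_1+k_2}$ with $|k_1+k_2|\sim 2k_0$) are smooth in the wavenumbers near $k_0$ and can be Taylor-expanded in $\epsilon$; and (b) the piece ${\rm II}$, in which $|k_1+k_3|=\epsilon|\chi_1-\chi_3|$ and $|k_2+k_4|=\epsilon|\chi_1-\chi_3|$ are small, so that $\omega_{k_1+k_3}=\sqrt{g\epsilon|\chi_1-\chi_3|}=O(\sqrt\epsilon)$ appears in the denominator together with $\ell$-factors that themselves vanish like $\sqrt\epsilon$. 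The interplay of these small numerators and denominators produces the non-analytic mean-flow term.

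\textbf{Step 1 — Expansion of the regular pieces.} Using $|k_0+\epsilon\chi_j|=k_0+\epsilon\lambda_j+\frac{\epsilon^2\mu_j^2}{2k_0}+O(\epsilon^3)$ and analogous expansions for $|k_i\pm k_j|$ and for scalar products, I would Taylor-expand $T_1$, ${\rm I}$ and ${\rm III}$ to first order in $\epsilon$. A direct evaluation at $\epsilon=0$ gives $T_1\to\frac{k_0^3}{16\pi^2}$, ${\rm I}\to\frac{(\sqrt2-1)k_0^3}{16\pi^2}$ and ${\rm III}\to -\frac{(\sqrt2+1)k_0^3}{16\pi^2}$, so $T_1-\tfrac12({\rm I}+{\rm III})\to \frac{k_0^3}{8\pi^2}$, matching the constant $1$ inside the parentheses of the claimed formula. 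The $O(\epsilon)$ correction from these regular pieces is then simplified using $\lambda_1+\lambda_2=\lambda_3+\lambda_4$ together with the $1\leftrightarrow2$, $3\leftrightarrow4$ symmetry of the test tensor $U_1U_2\bar U_3\bar U_4\delta_{1+2-3-4}$, which allows one to symmetrize the kernel and recognize the combination $\frac{3}{2k_0}(\lambda_2+\lambda_3)$.

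\textbf{Step 2 — Mean-flow contribution from ${\rm II}$.} This is the delicate part. A careful expansion at the small wavenumber $k_1+k_3=\epsilon(\chi_1-\chi_3)$ gives
\[
\ell_{k_1+k_3}^{-k_3}=\sqrt{\epsilon k_0}\,\frac{|\chi_1-\chi_3|+(\lambda_1-\lambda_3)}{\sqrt{|\chi_1-\chi_3|}}+O(\epsilon^{3/2}),\quad
\ell_{k_1+k_3}^{-k_1}=\sqrt{\epsilon k_0}\,\frac{|\chi_1-\chi_3|-(\lambda_1-\lambda_3)}{\sqrt{|\chi_1-\chi_3|}}+O(\epsilon^{3/2}),
\]
while $\ell_{k_1}^{k_3}=O(\epsilon^2)$ is negligible. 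Hence the first bracket of ${\rm II}$ reduces to $2\sqrt{\epsilon k_0}(\lambda_1-\lambda_3)/\sqrt{|\chi_1-\chi_3|}$, and the second, after using $\lambda_4-\lambda_2=\lambda_1-\lambda_3$, gives the same quantity. In the denominator, $\omega_{k_1+k_3}=\sqrt{g\epsilon|\chi_1-\chi_3|}$ dominates $\omega_{k_1}-\omega_{k_3}=O(\epsilon)$, so each summand equals $1/\sqrt{g\epsilon|\chi_1-\chi_3|}+O(1)$. Combined with the prefactor $\frac{g^{1/2}}{32\pi^2}(|k_1|\cdots|k_2+k_4|)^{1/4}\sim\frac{g^{1/2}k_0\sqrt{\epsilon|\chi_1-\chi_3|}}{32\pi^2}$, the powers of $\sqrt\epsilon$ and $\sqrt g$ cancel and one obtains
\[
{\rm II}=\frac{\epsilon k_0^2(\lambda_1-\lambda_3)^2}{4\pi^2|\chi_1-\chi_3|}+O(\epsilon^2),
\]
so $-\tfrac12{\rm II}$ produces precisely the singular term $-\frac{\epsilon k_0^2(\lambda_1-\lambda_3)^2}{8\pi^2|\chi_1-\chi_3|}$ claimed.

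\textbf{Assembly and main obstacle.} Summing the regular $O(1)$ and $O(\epsilon)$ contributions from $T_1$, ${\rm I}$, ${\rm III}$ and the mean-flow $O(\epsilon)$ contribution from ${\rm II}$, and multiplying by the overall $\epsilon^2$ in (\ref{H-plus-in-U}), yields the announced formula with an $O(\epsilon^4)$ remainder. The main obstacle is clearly Step 2: one has to track simultaneously the $\sqrt\epsilon$-small $\ell$-factors, the $\sqrt\epsilon$-small denominator $\omega_{k_1+k_3}$, and the $\sqrt\epsilon$-small prefactor from $(|k_1+k_3||k_2+k_4|)^{1/4}$, and show that the next-order corrections (coming in particular from including $\omega_{k_1}-\omega_{k_3}=O(\epsilon)$ and $\ell_{k_1}^{k_3}=O(\epsilon^2)$) are subdominant and can be absorbed in the $O(\epsilon^4)$ remainder. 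Step 1 is lengthier but essentially bookkeeping; the only non-mechanical part is identifying, via $\lambda_1+\lambda_2=\lambda_3+\lambda_4$ and the $1\leftrightarrow 2$, $3\leftrightarrow 4$ symmetrization, the compact coefficient $\lambda_2+\lambda_3$ in lieu of a longer linear combination of the $\lambda_j$'s.
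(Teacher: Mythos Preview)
Your proposal is correct and follows essentially the same route as the paper: it also splits $T_-^+=T_1-\tfrac12({\rm I}+{\rm II}+{\rm III})$, expands $T_1$, ${\rm I}$, ${\rm III}$ directly (Lemmas~\ref{T1-term-lemma} and~\ref{I-term-lemma}) and then handles ${\rm II}$ separately. Your Step~2 is in fact more explicit than the paper about the $\sqrt\epsilon$ balancing that makes ${\rm II}$ of order $\epsilon$ rather than $\calO(1)$; the paper merely states the result for ${\rm II}$ in Lemma~\ref{I-term-lemma} and writes ``obtained similarly.'' The one detail worth spelling out more carefully in Step~1 is that the $\calO(\epsilon)$ part of $T_1$ also contains the combination $|\chi_1-\chi_3|+|\chi_2-\chi_4|-|\chi_2-\chi_3|-|\chi_1-\chi_4|$, which vanishes only after the $1\leftrightarrow 2$ (resp.\ $3\leftrightarrow 4$) symmetrization you invoke; this is exactly how the paper proceeds.
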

In view of (\ref{H-plus-in-U}), the proof is based on the following  lemmas.

\begin{lemma}
\label{T1-term-lemma}
Under the modulational Ansatz (\ref{kj-around-k0}), 
the coefficient $T_1$ in  Proposition  \ref{T1}  becomes 
\begin{equation}
\label{T-1-expansion-integral}
\begin{aligned}
\int T_1 U_1 U_2 \bar U_3 \bar U_4 \delta_{1+2-3-4} &d\chi_{1234} = 
~ \frac{k_0^3}{16 \pi^2} \int
U_1 U_2\bar U_3 \bar U_4 \delta_{1+2-3-4} d\chi_{1234}\\[5pt]
& + \frac{3 k_0^2 \epsilon}{32 \pi^2} \int (\lambda_2 + \lambda_3) U_1 U_2\bar U_3 \bar U_4 \delta_{1+2-3-4} d\chi_{1234}
 + \calO(\epsilon^2) \,.
\end{aligned}
\end{equation}
\end{lemma}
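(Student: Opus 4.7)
The plan is to Taylor-expand $T_1$ in $\epsilon$ under the Ansatz (\ref{kj-around-k0}), keeping terms through $O(\epsilon)$, and then to symmetrize the resulting linear coefficient using the obvious symmetries of the integrand.

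First I would record the required small-$\epsilon$ expansions of each norm appearing in $T_1$. From $|{\rm k}_j| = |\pm(\rk_0+\epsilon\chi_j)| = \sqrt{k_0^2 + 2\epsilon k_0 \lambda_j + \epsilon^2|\chi_j|^2}$ one gets $|{\rm k}_j| = k_0 + \epsilon\lambda_j + O(\epsilon^2)$. Every ``difference'' norm in $T_1$, namely $|{\rm k}_2-{\rm k}_3|$, $|{\rm k}_1-{\rm k}_4|$, $|{\rm k}_1+{\rm k}_2|$, $|{\rm k}_3+{\rm k}_4|$, $|{\rm k}_1-{\rm k}_3|$, $|{\rm k}_2-{\rm k}_4|$, has the form $|2\rk_0 + \epsilon\psi|$ once the sign convention of (\ref{kj-around-k0}) is applied, and hence expands to $2k_0 + \epsilon\psi_x + O(\epsilon^2)$ with $\psi_x$ a sum of two $\lambda_j$'s. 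The outer prefactor similarly expands as $\sqrt[4]{|{\rm k}_1||{\rm k}_2||{\rm k}_3||{\rm k}_4|} = k_0 + \tfrac{\epsilon}{4}(\lambda_1+\lambda_2+\lambda_3+\lambda_4) + O(\epsilon^2)$.

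Next I would substitute these into the three square-bracketed summands of $T_1$ from Proposition~\ref{T1}. A direct expansion gives $-2k_0^2 - 2\epsilon k_0(\lambda_2+\lambda_3)$ for the first summand and $-2k_0^2 - 2\epsilon k_0(\lambda_1+\lambda_4)$ for the second. For the third, the four ``$2k_0$'' pieces subtracted inside the inner parenthesis overtake the ``$4k_0$'' coming from $2|{\rm k}_1|+2|{\rm k}_4|$, making that parenthesis $-4k_0 - 2\epsilon(\lambda_2+\lambda_3) + O(\epsilon^2)$; multiplying by $-2\sqrt{|{\rm k}_1||{\rm k}_4|}$ then yields $8k_0^2 + 4\epsilon k_0(\lambda_1+\lambda_2+\lambda_3+\lambda_4)$. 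Summing and using the elementary identity $(\lambda_2+\lambda_3)+(\lambda_1+\lambda_4) = \lambda_1+\lambda_2+\lambda_3+\lambda_4$ produces the clean bracket $4k_0^2 + 2\epsilon k_0(\lambda_1+\lambda_2+\lambda_3+\lambda_4) + O(\epsilon^2)$. Multiplying by the expanded prefactor gives
\begin{equation*}
T_1 = \frac{k_0^3}{16\pi^2} + \frac{3\epsilon k_0^2}{64\pi^2}(\lambda_1+\lambda_2+\lambda_3+\lambda_4) + O(\epsilon^2).
\end{equation*}

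Finally, I would exploit the invariance of $U_1U_2\bar U_3\bar U_4\,\delta_{1+2-3-4}$ under $\chi_1\leftrightarrow\chi_2$ and $\chi_3\leftrightarrow\chi_4$: this gives $\int\lambda_1(\cdots) = \int\lambda_2(\cdots)$ and $\int\lambda_3(\cdots) = \int\lambda_4(\cdots)$ under the integral, hence
\begin{equation*}
\int(\lambda_1+\lambda_2+\lambda_3+\lambda_4)\,U_1U_2\bar U_3\bar U_4\,\delta_{1+2-3-4}\,d\chi_{1234} = 2\int(\lambda_2+\lambda_3)\,U_1U_2\bar U_3\bar U_4\,\delta_{1+2-3-4}\,d\chi_{1234},
\end{equation*}
which, combined with the previous display, reproduces (\ref{T-1-expansion-integral}). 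The main obstacle is the bookkeeping in the middle step: one must check that the leading $O(1)$ pieces of the three bracketed summands add to $+4k_0^2$ rather than cancelling, and that the $O(\epsilon)$ contributions collapse cleanly to $\lambda_1+\lambda_2+\lambda_3+\lambda_4$; notably, no momentum-conservation identity coming from $\delta_{1+2-3-4}$ is needed at this stage (its use is postponed to the companion analysis of $T_2$).
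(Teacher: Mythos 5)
Your overall strategy coincides with the paper's: Taylor-expand $T_1$ pointwise in $\epsilon$ and then use the $\chi_1\leftrightarrow\chi_2$, $\chi_3\leftrightarrow\chi_4$ symmetries of $U_1U_2\bar U_3\bar U_4\,\delta_{1+2-3-4}$ to collapse the linear coefficient to $2(\lambda_2+\lambda_3)$. Your arithmetic is internally consistent and you land on exactly \eqref{T-1-expansion-integral}. There is, however, one substantive point of divergence you should be aware of: you treat all six ``difference'' norms, in particular $|{\rm k}_2-{\rm k}_3|$, $|{\rm k}_1-{\rm k}_4|$, $|{\rm k}_1-{\rm k}_3|$, $|{\rm k}_2-{\rm k}_4|$, as $2k_0+\calO(\epsilon)$. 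Under the Ansatz \eqref{kj-around-k0} one has ${\rm k}_{3,4}=-\rk_0-\epsilon\chi_{3,4}$, and in the actual derivation of $T_1$ from $H^{(4)}$ (the six permuted copies of $D_{1234}$, whose last factor involves $|{\rm k}_c+{\rm k}_d|$) the four mixed combinations arise as \emph{sums} $|{\rm k}_i+{\rm k}_j|$ with one index from $\{1,2\}$ and one from $\{3,4\}$; these equal $\epsilon|\chi_i-\chi_j|$ and are therefore $\calO(\epsilon)$, not $2k_0+\calO(\epsilon)$. This is why the paper's own intermediate expansion \eqref{T-1-expansion} contains the terms $\frac{k_0^2\epsilon}{32\pi^2}\left(|\chi_1-\chi_3|+|\chi_2-\chi_4|-|\chi_2-\chi_3|-|\chi_1-\chi_4|\right)$, which are absent from your pointwise formula $T_1=\frac{k_0^3}{16\pi^2}+\frac{3\epsilon k_0^2}{64\pi^2}(\lambda_1+\lambda_2+\lambda_3+\lambda_4)+\calO(\epsilon^2)$, and which vanish only \emph{after} integration by the same index-swap symmetries.

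Concretely: your pointwise expansion and the paper's differ at order $\epsilon$ (yours has no $|\chi_i-\chi_j|$ contributions and a different distribution of the $\lambda_j$'s), yet both integrate to the same right-hand side, as one checks since $\int(\lambda_1+\lambda_4)(\cdots)=\int(\lambda_2+\lambda_3)(\cdots)$ turns the paper's $\frac{1}{4}(\lambda_1+\lambda_4+5\lambda_2+5\lambda_3)$ into $\frac{3}{2}(\lambda_2+\lambda_3)$ as well. So your proof is a valid proof of the lemma for the formula of Proposition \ref{T1} read literally, and it happens to agree with the paper's computation of the true coefficient; but your remark that ``no momentum-conservation identity coming from $\delta_{1+2-3-4}$ is needed'' and the clean form of your bracket $4k_0^2+2\epsilon k_0\sum_j\lambda_j$ are artifacts of that literal reading. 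If you instead expand the coefficient as it actually arises from $H^{(4)}_R$, the third bracketed summand of $T_1$ is $\calO(\epsilon)$ rather than $8k_0^2+\calO(\epsilon)$, the leading order $4k_0^2$ is assembled differently, and the cancellation of the $\epsilon|\chi_i-\chi_j|$ terms under the integral becomes an essential step of the proof rather than something that can be bypassed.
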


\begin{proof}
Using that $|{\rm k}_j| = k_0 + \epsilon \lambda_j + \calO (\epsilon^2)$, 
\begin{equation}
\label{T-1-expansion}
\begin{aligned}
T_1= & ~ \frac{k_0^2}{16 \pi^2} \Big( k_0 + \epsilon \frac{\lambda_1+\lambda_4+5(\lambda_2+\lambda_3)}{4} \Big) \\[5pt]
& + 
\frac{k_0^2\epsilon }{32 \pi^2} (|\chi_1-\chi_3|+ |\chi_2-\chi_4| - |\chi_2-\chi_3| - |\chi_1-\chi_4|) + \calO(\epsilon^2) \,. 
\end{aligned}
\end{equation}
This expression for $T_1$  can be  simplified 
by rearranging the indices.  Since 
$$
\int |\chi_1 - \chi_3| U_1 U_2 \bar U_3 \bar U_4 \delta_{1+2-3-4} d\chi_{1234} = \int |\chi_2 - \chi_3| U_1 U_2 \bar U_3 \bar U_4 \delta_{1+2-3-4} d\chi_{1234} \,,
$$
and
$$
\int |\chi_2-\chi_4| U_1 U_2 \bar U_3 \bar U_4 \delta_{1+2-3-4} d\chi_{1234} = \int |\chi_1-\chi_4| U_1 U_2 \bar U_3 \bar U_4 \delta_{1+2-3-4} d\chi_{1234} \,,
$$
the contribution from the second line in (\ref{T-1-expansion}) to the RHS integral (\ref{T-1-expansion-integral}) is zero. The first line in (\ref{T-1-expansion}) is treated via
\begin{equation*}
\label{lemma-II-proof-14-23-symmetry}
\int (\lambda_1 + \lambda_4) U_1 U_2 \bar U_3 \bar U_4 \delta_{1+2-3-4} d\chi_{1234} = \int 
(\lambda_2 + \lambda_3) U_1 U_2 \bar U_3 \bar U_4 \delta_{1+2-3-4} d\chi_{1234} \,,
\end{equation*}
leading to the desired result.
\end{proof}

\begin{lemma}
\label{I-term-lemma}
Let ${\rm I}$, ${\rm II}$, ${\rm III}$ be given as in Proposition \ref{lemma-T2-coeff}.
Assuming (\ref{kj-around-k0}), we have
$$
\begin{aligned}
&\int {\rm I} \, U_1 U_2 \bar U_3 \bar U_4 \delta_{1+2-3-4} d\chi_{1234} =  ~ \frac{k_0^3}{16 \pi^2} (\sqrt{2}-1) \int 
U_1 U_2 \bar U_3 \bar U_4 \delta_{1+2-3-4} d\chi_{1234}\\[5pt]
&\qquad + \frac{3 k_0^2 \epsilon}{32 \pi^2} (\sqrt{2}-1) \int (\lambda_2 + \lambda_3)
U_1 U_2 \bar U_3 \bar U_4 \delta_{1+2-3-4} d\chi_{1234} + \calO(\epsilon^2) \,,
\end{aligned}
$$
\begin{equation*}
\int {\rm II} \, U_1 U_2 \bar U_3 \bar U_4 \delta_{1+2-3-4} d\chi_{1234}  = 
\frac{k_0^2 \epsilon}{4 \pi^2} \int 
\frac{(\lambda_1-\lambda_3)^2}{|\chi_1 - \chi_3|} U_1 U_2 \bar U_3 \bar U_4 \delta_{1+2-3-4} d\chi_{1234} + \calO(\epsilon^2) \,,
\end{equation*}
$$
\begin{aligned}
&\int {\rm III} \, U_1 U_2 \bar U_3 \bar U_4 \delta_{1+2-3-4} d\chi_{1234} = -\int \frac{k_0^3}{16 \pi^2} (\sqrt{2}+1)
U_1 U_2 \bar U_3 \bar U_4 \delta_{1+2-3-4} d\chi_{1234}\\[5pt]
&\qquad - \frac{3 k_0^2 \epsilon}{32 \pi^2} (\sqrt{2}+1) \int (\lambda_2 + \lambda_3)
U_1 U_2 \bar U_3 \bar U_4 \delta_{1+2-3-4} d\chi_{1234} + \calO(\epsilon^2) \,.
\end{aligned}
$$

\end{lemma}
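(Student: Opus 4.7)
My plan is to substitute the modulational Ansatz \eqref{kj-around-k0} into the explicit expressions \eqref{I-term-new}--\eqref{III-term} for ${\rm I}$, ${\rm II}$, ${\rm III}$, interpreting the signs so that ${\rm k}_1 = {\rm k}_0+\epsilon\chi_1$, ${\rm k}_2 = {\rm k}_0+\epsilon\chi_2$, ${\rm k}_3 = -{\rm k}_0-\epsilon\chi_3$, ${\rm k}_4 = -{\rm k}_0-\epsilon\chi_4$, and then to Taylor expand every factor in $\epsilon$ up to the order displayed in the lemma. Throughout the expansion I will use the constraint $\chi_1+\chi_2=\chi_3+\chi_4$ imposed by $\delta_{1+2-3-4}$, which gives ${\rm k}_1+{\rm k}_3=-({\rm k}_2+{\rm k}_4)$ and hence $|\chi_1-\chi_3|=|\chi_2-\chi_4|$ and $\lambda_1-\lambda_3=-(\lambda_2-\lambda_4)$. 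After expansion I will symmetrize in the integration variables, exactly as in the proof of Lemma \ref{T1-term-lemma}, to put each contribution in the stated form.

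For ${\rm I}$ and ${\rm III}$ the denominators $\omega_1+\omega_2\pm\omega_{{\rm k}_1+{\rm k}_2}$ and $\omega_3+\omega_4\pm\omega_{{\rm k}_3+{\rm k}_4}$ are of size $(2\pm\sqrt{2})\omega_{{\rm k}_0}$, since ${\rm k}_1+{\rm k}_2\approx 2{\rm k}_0$ and ${\rm k}_3+{\rm k}_4\approx -2{\rm k}_0$, so they may be expanded straightforwardly. At leading order the brackets $\ell_{{\rm k}_1}^{{\rm k}_2}+\ell_{{\rm k}_1+{\rm k}_2}^{-{\rm k}_1}+\ell_{{\rm k}_1+{\rm k}_2}^{-{\rm k}_2}$ collapse to $\pm 2k_0$: the pure piece $\ell_{{\rm k}_1}^{{\rm k}_2}\to 2k_0$, while the mixed pieces vanish because $(2{\rm k}_0)\cdot(-{\rm k}_0)+2k_0\cdot k_0=0$. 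Combined with the fourth-root prefactor $\approx\sqrt{2}\,k_0^{3/2}$ and the identity $\sqrt{2}/(2\pm\sqrt{2})=\sqrt{2}\mp 1$, this yields the announced $O(1)$ constants $\pm(k_0^3/16\pi^2)(\sqrt{2}\mp 1)$. For the $O(\epsilon)$ correction I expand $|{\rm k}_j|=k_0+\epsilon\lambda_j+O(\epsilon^2)$ inside every factor; the $|\chi_i-\chi_j|$-type contributions can be discarded by the interchange symmetries $1\leftrightarrow 2$, $3\leftrightarrow 4$ used in Lemma \ref{T1-term-lemma}, and the remaining linear terms in $\lambda_j$ symmetrize to the stated multiple of $\lambda_2+\lambda_3$.

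The coefficient ${\rm II}$ is the delicate case. Since ${\rm k}_1+{\rm k}_3=\epsilon(\chi_1-\chi_3)$ and ${\rm k}_2+{\rm k}_4=\epsilon(\chi_2-\chi_4)$ are of order $\epsilon$, the term $\omega_{{\rm k}_1+{\rm k}_3}=\sqrt{g\epsilon|\chi_1-\chi_3|}$ is of order $\sqrt{\epsilon}$ and dominates $\omega_1-\omega_3=O(\epsilon)$, so the denominator behaves as $\sqrt{g\epsilon|\chi_1-\chi_3|}$. The bracket $\ell_{{\rm k}_1}^{{\rm k}_3}+\ell_{{\rm k}_1+{\rm k}_3}^{-{\rm k}_3}-\ell_{{\rm k}_1+{\rm k}_3}^{-{\rm k}_1}$ vanishes to leading order because ${\rm k}_1\cdot{\rm k}_3+|{\rm k}_1||{\rm k}_3|=O(\epsilon^2)$ and the two remaining $\ell$ terms coincide at zeroth order; a careful expansion yields $2\sqrt{\epsilon k_0|\chi_1-\chi_3|}\,(\lambda_1-\lambda_3)/|\chi_1-\chi_3|$, and the analogous $2,4$ bracket gives $-2\sqrt{\epsilon k_0|\chi_2-\chi_4|}\,(\lambda_2-\lambda_4)/|\chi_2-\chi_4|$. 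Multiplying the two brackets with the $O(\sqrt{\epsilon})$ prefactor and with the sum of reciprocal denominators collapsed via $|\chi_2-\chi_4|=|\chi_1-\chi_3|$ into $2/\sqrt{g\epsilon|\chi_1-\chi_3|}$, the various half-powers of $\epsilon$ assemble into a single $\epsilon$, and the product $(\lambda_1-\lambda_3)(\lambda_2-\lambda_4)=-(\lambda_1-\lambda_3)^2$ (again by the delta constraint) turns the overall minus sign into the positive coefficient of $(\lambda_1-\lambda_3)^2/|\chi_1-\chi_3|$ claimed.

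The hard part will be the bookkeeping for ${\rm II}$: I must simultaneously extract the $\sqrt{\epsilon}$ behaviors of the two brackets, the fourth-root prefactor and the reciprocal denominator, and apply the delta-function identity $|\chi_1-\chi_3|=|\chi_2-\chi_4|$ at precisely the right step to merge the two denominator terms into a single $\sqrt{\epsilon}^{-1}$ factor. A sign or factor slip in any of these would either spoil the cancellation or change the sign of the quadratic expression $(\lambda_1-\lambda_3)^2$. For ${\rm I}$ and ${\rm III}$ the analysis is essentially routine Taylor expansion, the only subtlety being the symmetrization of the $O(\epsilon)$ correction into the form $\lambda_2+\lambda_3$.
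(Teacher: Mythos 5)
Your proposal is correct and follows essentially the same route as the paper: substitute the modulational Ansatz, observe that the mixed $\ell$-terms (for ${\rm I}$, ${\rm III}$) respectively the direct term $\ell_{{\rm k}_1}^{{\rm k}_3}$ (for ${\rm II}$) are $\calO(\epsilon^2)$, Taylor expand the remaining factors, and symmetrize under the momentum constraint $\chi_1+\chi_2=\chi_3+\chi_4$. In fact you supply more detail than the paper, whose proof of the ${\rm II}$ identity is reduced to ``obtained similarly''; your bookkeeping of the half-powers of $\epsilon$ and the sign flip via $(\lambda_1-\lambda_3)(\lambda_2-\lambda_4)=-(\lambda_1-\lambda_3)^2$ checks out against a direct computation.
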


\begin{proof}
Under the assumption (\ref{kj-around-k0}), 
$\ell_{{\rm k}_1+{\rm k}_2}^{ -{\rm k}_1} = \calO (\epsilon^2)$, and so are
$\ell_{{\rm k}_1+{\rm k}_2}^{ -{\rm k}_2}, \ell_{{\rm k}_3+{\rm k}_4}^{-{\rm k}_3}$, $\ell_{{\rm k}_3+{\rm k}_4}^{-{\rm k}_4}$.
Expanding the remaining terms in (\ref{I-term-new}), 
we get 
$$
{\rm I} = \frac{k_0^3}{16 \pi^2} (\sqrt{2}-1) + \frac{3 k_0^2 \epsilon}{32 \pi^2} (\sqrt{2}-1) (\lambda_1+\lambda_2+\lambda_3+\lambda_4) + \calO(\epsilon^2) \,.
$$
The  identities for  ${\rm II}$ and ${\rm III}$ are obtained similarly.
\end{proof}

\section{Hamiltonian Dysthe equation}

The third-order normal form transformation eliminates all cubic terms from the Hamiltonian $H$. We find that,  in the modulational 
regime (\ref{kj-around-k0}), the reduced Hamiltonian is  
\begin{equation}
\label{reduced-H-for-dysthe}
\begin{aligned}
H & = H^{(2)} + H_+^{(4)} = \int \omega (\rk_0 + \epsilon \chi) |U_\chi|^2 d\chi \\
&  + 
\frac{k_0^3 \epsilon^2}{8 \pi^2} \int
\left( 1 + \frac{3 \epsilon}{2k_0} (\lambda_2+\lambda_3) - 
\frac{\epsilon (\lambda_1-\lambda_3)^2}{k_0|\chi_1 - \chi_3|}
\right)
U_1 U_2 \bar U_3 \bar U_4 \delta_{1+2-3-4} d\chi_{1234} + \calO(\epsilon^4) \,.
\end{aligned}
\end{equation}

\subsection{Derivation of the Dysthe equation}

For the purpose of returning to variables in the physical space, we introduce
\begin{equation*}
\label{z-u-relation-physical}
\begin{aligned}
z(\bx) & = \frac{1}{2\pi} \int z(\rk) e^{i\rk \cdot \bx} d\rk =  \frac{\epsilon}{2\pi} \int U(\chi) e^{i k_0 x} 
e^{i \chi \cdot \epsilon \bx} d\chi = \epsilon \, u(\bX) e^{ik_0 x} \,, 
\end{aligned}
\end{equation*}
where $u$ is the inverse Fourier transform of $U$ depending on the long spatial scale $\bX = (X, Y) = \epsilon \, \bx$, hence
\begin{equation*}
\label{new-variables-u}
\begin{pmatrix}
u \\ \bar u
\end{pmatrix} = P_2 \begin{pmatrix}
z \\ \bar z  
\end{pmatrix} = \epsilon^{-1} \begin{pmatrix}
e^{-i k_0 x} & 0 \\
0 & e^{i k_0 x}
\end{pmatrix} \begin{pmatrix}
z \\ \bar z 
\end{pmatrix} \,.
\end{equation*}
From  (\ref{z-physical-ham-system}), the evolution equations for $(u, \bar u)$ are 
\begin{equation}
\label{u-physical-ham-system}
\partial_t \begin{pmatrix}
u \\ \bar u
\end{pmatrix} = J_2 \begin{pmatrix}
\partial_u H \\ \partial_{\bar u} H
\end{pmatrix} = \begin{pmatrix}
0 & -i \\ i & 0
\end{pmatrix} \begin{pmatrix}
\partial_u H \\ \partial_{\bar u} H
\end{pmatrix} \,,
\end{equation}
where $J_2 = \epsilon^2 P_2 J_1 P_2^*$. 

We now derive a Dysthe equation for the slowly varying wave envelope $u$ 
which, by construction,  has the property of being Hamiltonian.
The starting point is  (\ref{u-physical-ham-system}) with $H$ being the truncated Hamiltonian  (\ref{reduced-H-for-dysthe}).
In the $(u, \bar u)$ variables, 
the quadratic part $H^{(2)} $ becomes
\begin{equation*}
\label{quadratic-H-in-u0}
\begin{aligned}
H^{(2)} = & \int \bar{u} \, \omega(\rk_0+\epsilon  D) u \, d\bX \,.
\end{aligned}
\end{equation*}
Taylor expanding the linear dispersion relation leads to
\begin{equation*}
\label{quadratic-H-in-u}
\begin{aligned}
&H^{(2)}  = \int \omega_0  |u|^2 d\bX \\
& + \frac{\omega_0}{2} \int
\bar u \left(  \frac{\epsilon }{k_0} D_X  - \frac{ \epsilon^2}{4 k_0^2} D^2_X +\frac{ \epsilon^2 }{2k_0^2} D^2_Y + i \frac{\epsilon^3}{8 k_0^3} D_X^3 
- \frac{3\epsilon^3}{4 k_0^3} D_X D_Y^2 \right) u \, d\bX + \calO(\epsilon^4) \,,
\end{aligned}
\end{equation*}
where $\omega_0 = \sqrt{g k_0}$ and $D = (D_X, D_Y) = -i \, (\partial_X, \partial_Y)$.
Turning to the quartic term of the truncated Hamiltonian, we obtain the following result.
\begin{lemma}
\label{H4-in-u}
In the $(u, \bar u)$ variables, 
the quartic  term $H_+^{(4)}$ in  (\ref{reduced-H-for-dysthe}) 
 is
\begin{equation}
\label{quartic-H-in-u}
\begin{aligned}
H_+^{(4)} = & ~ \frac{1}{2} \int \left( \epsilon^2 k_0^3 |u|^4 + 3\epsilon^3 k_0^2 |u|^2 {\rm Im} (\bar u \partial_X u) + \epsilon^3 k_0^2 |u|^2 ~\partial_X^2 |D|^{-1} |u|^2 \right) d\bX + \calO(\epsilon^4) \,.
\end{aligned}
\end{equation}
\end{lemma}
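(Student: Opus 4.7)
The plan is to convert each of the three pieces of $H_+^{(4)}$ in \eqref{reduced-H-for-dysthe} into a physical-space integral involving the slow envelope $u(\bX)$. The bridge is the Fourier duality $u(\bX)=(2\pi)^{-1}\int U(\chi)e^{i\chi\cdot\bX}d\chi$ together with the following Plancherel-type identity: for any smooth symbol $M(\chi_1,\chi_2,\chi_3,\chi_4)$,
\[
\int M\,U_1 U_2 \bar U_3 \bar U_4\,\delta_{1+2-3-4}\,d\chi_{1234} \;=\; (2\pi)^2 \int \mathcal{M}(u,u,\bar u,\bar u)(\bX)\,d\bX,
\]
where $\mathcal{M}$ is the quadrilinear Fourier multiplier with symbol $M$. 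This follows from substituting the inverse Fourier transforms of $u$ and $\bar u$ and using $\int e^{i(\chi_1+\chi_2-\chi_3-\chi_4)\cdot\bX}d\bX=(2\pi)^2\delta_{1+2-3-4}$. In this correspondence $\lambda_j$ becomes $D_X=-i\partial_X$ acting at slot $j$ if that slot carries $u$, and $-D_X$ if it carries $\bar u$.

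Applying the identity with $M\equiv 1$ gives $(2\pi)^2\int|u|^4\,d\bX$, and multiplication by the prefactor $k_0^3\epsilon^2/(8\pi^2)$ yields the leading term $\tfrac12\epsilon^2 k_0^3|u|^4$ in \eqref{quartic-H-in-u}. For the $\lambda_2+\lambda_3$ contribution the rule above produces $(2\pi)^2\int|u|^2\bigl[\bar u\,D_X u - u\,D_X\bar u\bigr]d\bX$; the bracket equals $2\,\Im(\bar u\,\partial_X u)$, so after multiplying by $k_0^3\epsilon^2/(8\pi^2)\cdot 3\epsilon/(2k_0)$ one obtains $\tfrac32\epsilon^3 k_0^2|u|^2\Im(\bar u\,\partial_X u)$.

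The non-local piece is the most delicate. Since its kernel depends only on $\mu:=\chi_1-\chi_3$ and the delta constraint forces $\mu=\chi_4-\chi_2$, I change variables to $(\chi_2,\chi_3,\mu)$ and factor the fourfold integral as
\[
\int\frac{\mu_X^2}{|\mu|}\Bigl(\int U(\chi_3+\mu)\bar U(\chi_3)\,d\chi_3\Bigr)\Bigl(\int U(\chi_2)\bar U(\chi_2+\mu)\,d\chi_2\Bigr)d\mu.
\]
A direct computation shows that each inner integral equals $2\pi\,\widehat{|u|^2}(\pm\mu)$, so the expression reduces to $(2\pi)^2\int(\mu_X^2/|\mu|)\,|\widehat{|u|^2}(\mu)|^2\,d\mu$. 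Recognizing $\mu_X^2/|\mu|$ as the symbol of the pseudodifferential operator $-\partial_X^2|D|^{-1}$, Parseval converts this to $-(2\pi)^2\int |u|^2\,\partial_X^2|D|^{-1}|u|^2\,d\bX$. Combined with the $-\epsilon/k_0$ factor in \eqref{reduced-H-for-dysthe} this gives $\tfrac12\epsilon^3 k_0^2|u|^2\partial_X^2|D|^{-1}|u|^2$.

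Summing the three contributions and absorbing higher-order remainders in $\calO(\epsilon^4)$ yields \eqref{quartic-H-in-u}. The main obstacle is the non-local piece: its symbol $\mu_X^2/|\mu|$ is not polynomial in $\mu$, so the resulting operator is genuinely non-local, and the crucial move is to realize the quartic integral as a bilinear pairing of $|u|^2$ with itself through the pseudodifferential multiplier $|D|^{-1}$.
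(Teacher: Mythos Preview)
Your proof is correct and follows essentially the same route as the paper: both convert each of the three Fourier integrals in \eqref{reduced-H-for-dysthe} into a physical-space expression in $u$ via the inverse Fourier transform and the identity $\int e^{i(\chi_1+\chi_2-\chi_3-\chi_4)\cdot\bX}\,d\bX=(2\pi)^2\delta_{1+2-3-4}$. Your treatment of the non-local piece (the change of variables to $\mu=\chi_1-\chi_3$ and the factorization into two copies of $\widehat{|u|^2}$) is in fact more detailed than the paper's own proof, which simply states the identity \eqref{eq5-13} without elaboration.
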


\begin{proof}
Using that $U$ is the Fourier transform of $u$, we have 
$$
U_1 U_2 \bar U_3 \bar U_4 = \frac{1}{16 \pi^4} \int u_1 u_2 \bar u_3 \bar u_4 e^{-i(\chi_1 \cdot \bX_1 + \chi_2 \cdot \bX_3 - \chi_3 \cdot \bX_3 - \chi_4 \cdot \bX_4)} d\bX_{1234} \,,
$$
and using the definition of $\delta_{1+2-3-4}$ yields
\begin{align} \label{eq5-11}
 \int U_1 U_2 \bar U_3 \bar U_4 \delta_{1+2-3-4} d\chi_{1234} 
& = 4 \pi^2 \int |u|^4 d\bX \,.
\end{align}
Similarly, 
$$
\begin{aligned}
& \int \lambda_2 U_1 U_2 \bar U_3 \bar U_4 \delta_{1+2-3-4} d\chi_{1234}  = - 4 i \pi^2 \int |u|^2 \bar u \, \partial_X u \, d\bX \,,
\end{aligned}
$$
$$
\begin{aligned}
& \int \lambda_3 U_1 U_2 \bar U_3 \bar U_4 \delta_{1+2-3-4} d\chi_{1234}  = 4 i  \pi^2 \int |u|^2 u \, \overline{ \partial_X u} \, d\bX \,,
\end{aligned}
$$
and
\begin{equation} \label{eq5-12}
\int (\lambda_2 + \lambda_3) U_1 U_2 \bar U_3 \bar U_4 \delta_{1+2-3-4} d\chi_{1234} = 8 \pi^2 \int |u|^2 {\rm Im}(\bar u \partial_X u) \, d\bX \,.
\end{equation}
The remaining term of $H_+^{(4)}$ amounts to 
\begin{equation} \label{eq5-13}
\int \frac{(\lambda_1 - \lambda_3)^2}{|\chi_1-\chi_3|} U_1 U_2 \bar U_3 \bar U_4 \delta_{1+2-3-4} d\chi_{1234} = - 4 \pi^2 \int |u|^2 \partial_X^2|D|^{-1} |u|^2 \, d\bX \,.
\end{equation}
Combining (\ref{eq5-11})--(\ref{eq5-13}),
we get the quartic term  $H_+^{(4)}$  given in   
(\ref{quartic-H-in-u}).
\end{proof}

The resulting reduced Hamiltonian takes the form
\begin{eqnarray} \label{Hamil2}
H & = & \int \omega_0 |u|^2 + \varepsilon \frac{\omega_0}{2 k_0} \operatorname{Im}(\overline u \partial_X u)
- \varepsilon^2 \frac{\omega_0}{8 k_0^2} |\partial_X u|^2 + \varepsilon^2 \frac{\omega_0}{4 k_0^2} |\partial_Y u|^2 
+ \varepsilon^2 \frac{k_0^3}{2} |u|^4 \nonumber \\
& & \quad + \varepsilon^3 \frac{\omega_0}{16 k_0^3} \operatorname{Im}\!\left[(\overline{\partial_X u})(\partial_X^2 u)\right]
- \varepsilon^3 \frac{3 \omega_0}{8 k_0^3} \operatorname{Im}\!\left[(\overline{\partial_X u})(\partial_Y^2 u)\right]
+ \varepsilon^3 \frac{3 k_0^2}{2} |u|^2 \operatorname{Im}(\overline u \partial_X u) \nonumber \\
& & \quad + \varepsilon^3 \frac{k_0^2}{2} |u|^2 \partial_X^2 |D|^{-1} |u|^2 d\bX +  \calO(\varepsilon^4) \,.
\end{eqnarray}
It follows from (\ref{u-physical-ham-system}) that the evolution equation for $u$ up to order $\calO(\epsilon^3)$ is
\begin{eqnarray}
\label{dysthe-eqn}
i \, \partial_t u & = & \partial_{\bar u} H \nonumber \\ 
& = & \omega_0 u - i \epsilon \frac{\omega_0}{2 k_0} \partial_X u + \epsilon^2 \frac{\omega_0}{8k_0^2} \partial_X^2 u - \epsilon^2 \frac{\omega_0}{4k_0^2} \partial_Y^2 u + \epsilon^2 k_0^3 |u|^2 u \nonumber \\ 
& & + i \epsilon^3 \frac{\omega_0}{16 k_0^3} \partial_X^3 u - i \epsilon^3 \frac{3 \omega_0}{8 k_0^3} \partial_X \partial_Y^2 u - 3i \epsilon^3 k_0^2 |u|^2 \partial_X u + \epsilon^3 k_0^2 u ~\partial_X^2 |D|^{-1} |u|^2 \,,
\end{eqnarray}
which is  a Hamiltonian version of Dysthe's equation for three-dimensional gravity waves on deep water.  
It describes modulated waves moving in the positive $x$-direction at group velocity
$\partial_{k_x} \omega(\rk_0) = \frac{\omega_0}{2 k_0}$ as shown by the advection term. The  nonlocal term  $u \, \partial_X^2 |D|^{-1} |u|^2$ is a signature
of the Dysthe equation. It reflects the presence of the wave-induced mean flow as in the classical derivation using the method of multiple scales.
It reduces to $-u \, |D_X| |u|^2$ in the two-dimensional case.

\begin{remark}
It has been suggested in \cite{TKDV00} that keeping the linear dispersion relation exact, rather than expanding it in powers of $\epsilon$ as done above, may
provide an overall better approximation. In this Hamiltonian setting, the resulting envelope equation would take the form 
\begin{equation*}
\label{dysthe-eqn2}
i \, \partial_t u = \omega(\rk_0+\epsilon D ) u  +\epsilon^2 k_0^3 |u|^2 u 
 - 3i \epsilon^3 k_0^2 |u|^2 \partial_X u + \epsilon^3 k_0^2 u ~\partial_X^2 |D|^{-1} |u|^2 \,.
\end{equation*}
\end{remark}

\subsection{Moving reference frame}

We can further simplify the Hamiltonian \eqref{Hamil2} by subtracting a multiple of the wave action
$
M = \int |u|^2 \, d\bX 
$
together with a multiple of the  impulse
\[
I = \int \eta \, \partial_\bx \xi \, d\bx = \int {\rm k}_0 |u|^2 
+ \epsilon \operatorname{Im}(\overline u \partial_{\bX} u) \, d\bX \,,
\]
yielding
\begin{eqnarray*}
\widehat H & = & H - \partial_{{\rm k}} \omega({\rm k}_0) \cdot I 
- \Big[ \omega_0 - {\rm k}_0 \cdot \partial_{{\rm k}} \omega({\rm k}_0) \Big] M \,. 
\end{eqnarray*}
Since  $M$ and $I$ are conserved with respect to the flow of $\widehat H$,  
they Poisson commute with $H$ \cite{CGS21b}. This transformation preserves the symplectic structure $J_2$
and the resulting simplification of \eqref{dysthe-eqn}  reads, after introduction of the slow time  $\tau = \epsilon^2 t$,
\begin{equation*}
\label{dysthe-eqn-slow-time}
\begin{aligned}
i \, \partial_\tau u & =  \frac{\omega_0}{8k_0^2} \partial_X^2 u -  \frac{\omega_0}{4k_0^2} \partial_Y^2 u +  k_0^3 |u|^2 u \\[3pt]
& \quad + i \epsilon \frac{\omega_0}{16 k_0^3} \partial_X^3 u - i \epsilon \frac{3 \omega_0}{8 k_0^3} \partial_X \partial_Y^2 u - 3i \epsilon k_0^2 |u|^2 \partial_X u+ \epsilon k_0^2 u ~\partial_X^2 |D|^{-1} |u|^2 \,.
\end{aligned}
\end{equation*}

\section{Reconstruction of the free surface}

\subsection{Approximation of auxiliary Hamiltonian $K^{(3)}$}

Reconstruction of the free surface from the wave envelope requires solving the auxiliary Hamiltonian system \eqref{Eq:Hamiltonflow-Fourier}.
Its numerical computation is costly in general because this involves evaluating multiple multi-dimensional integrals
which are not convolutions and thus cannot be calculated by the FFT.
As an alternative, we propose a simplified version that can be solved efficiently by exploiting the fact that wave propagation
is primarily in the $x$-direction according to the modulational Ansatz \eqref{kj-around-k0}.

Introducing $\K = (\K_x, \K_y)$ with $k_x = \K_x$, $k_y = \epsilon \K_y$ such that
\begin{equation}
\label{etan-xin-new}
\etan_\kappa = \etan (\K_x, \K_y) := \eta (\K_x, \epsilon \K_y) \,, \quad
\xin_\kappa = \xin (\K_x, \K_y) := \xi (\K_x, \epsilon \K_y) \,,
\end{equation}
the coefficients inside the integrals (\ref{K3-fourier})  
can be expanded. In particular, 
$$ 
  d_{123} =  d_{123}^x + \epsilon^2 d_{123}^R + \calO(\epsilon^4) \,,
$$
where
\begin{equation*}
\label{exp-coeffs}
\begin{aligned}
 d_{123}^x &:= g^2 \left( 
\K_{1x}^2 + \K_{2x}^2 + \K_{3x}^2 - 
2|\K_{1x}||\K_{2x}| - 2|\K_{1x}||\K_{3x}| -
2|\K_{2x}||\K_{3x}| \right) \,, \\[5pt]
 \frac{d_{123}^R}{g^2}  &:= 
\frac{\K_{1y}^2}{|\K_{1x}|}
(|\K_{1x}|-|\K_{2x}|-|\K_{3x}|) +
\frac{\K_{2y}^2}{|\K_{2x}|}
(|\K_{2x}|-|\K_{1x}|-|\K_{3x}|)  \nonumber\\
& \quad + 
\frac{\K_{3y}^2}{|\K_{3x}|}
(|\K_{3x}|-|\K_{1x}|-|\K_{2x}|) \,. \\[5pt]
\end{aligned}
\end{equation*}
The contribution $d_{123}^x$ identifies to the denominator in the two-dimensional case. It reduces to $-4 g^2 |\K_{1x}||\K_{3x}|$ in the region
where $\K_{1x} + \K_{2x}+ \K_{3x} =0$ and  $\K_{1x} \K_{3x }>0$.

The above computations allow us to derive the expansion of $K'^{(3)}$ up to order $\calO(\epsilon^6)$. The leading-order term 
identifies to the formula for $K^{(3)}$  in two dimensions (see \cite{CS16} Theorem 3.8), while the correction term is much more complicated
as shown below.

\begin{proposition}
\label{proposition-new-K3-expansion}
The expansion of $K^{'(3)}$ in  the regime    (\ref{etan-xin-new}) is 
\begin{equation}
\label{K3-fourier-new}
\begin{aligned}
K^{'(3)} (\etan, \xin) = & - \frac{\epsilon^2}{4\pi}
\int \sgn (\K_{1x}) \sgn (\K_{2x}) |\K_{3x}| 
  \etan_1 \etan_2  \xin_3  
\delta_{123} d\K_{123} \\[5pt]
& + \frac{\epsilon^4}{2\pi} 
\int \left( 
R_{123}   \etan_1 \etan_2 \xin_3
+ Q_{123}   \xin_1   \xin_2 \xin_3 \right)
\delta_{123} d\K_{123} + \calO (\epsilon^6) \,,
\end{aligned}
\end{equation}
where  $R_{123}$ and $Q_{123}$ are given by
\begin{eqnarray*}
\label{r-123-term-proposition}
R_{123} & = &
\frac{\K_{1y}^2 |\K_{2x}|}{4\K_{1x}^2} 
\big( \sgn (\K_{1x}) \sgn (\K_{2x}) - 
\sgn (\K_{1x}) \sgn (\K_{3x}) \big) \\
& & - \frac{\K_{1y}^2}{4|\K_{1x}|} 
\big( 1 + \sgn (\K_{1x}) \sgn (\K_{3x}) \big) 
 -  \frac{\K_{1y} \K_{2y}}{2 |\K_{1x}|} 
\big(1 - \sgn (\K_{2x}) \sgn (\K_{3x}) \big) \,, \\
Q_{123} & = & - \frac{1}{8g} 
\Big( \frac{\K_{1y}^2 |\K_{3x}|}{|\K_{1x}|} + \frac{\K_{3y}^2 |\K_{1x}|}{|\K_{3x}|} - 
2 \K_{1y} \K_{3y} \sgn (\K_{1x}) \sgn (\K_{3x}) \Big) \,.
\label{q-123-term-proposition}
\end{eqnarray*}
\end{proposition}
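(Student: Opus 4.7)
The plan is to perform the change of coordinates $k_{jy}=\epsilon\K_{jy}$ directly inside the integral (\ref{K3-fourier}) defining $K^{(3)}$, and to Taylor-expand every coefficient in powers of $\epsilon^2$, keeping careful track of the Jacobian of the measure and of the Dirac distribution. Under this rescaling $d{\rm k}_j=\epsilon\,d\K_j$ and
\[
\delta({\rm k}_1+{\rm k}_2+{\rm k}_3)=\epsilon^{-1}\delta(\K_{1x}+\K_{2x}+\K_{3x})\,\delta(\K_{1y}+\K_{2y}+\K_{3y})\,,
\]
so the product $\delta_{123}\,d{\rm k}_{123}$ picks up a global factor $\epsilon^2$, producing the overall $\epsilon^2$ in front of the leading term in (\ref{K3-fourier-new}). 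Taylor expansion of the elementary symbols gives $|{\rm k}_j|=|\K_{jx}|+\tfrac{\epsilon^2\K_{jy}^2}{2|\K_{jx}|}+\calO(\epsilon^4)$ and ${\rm k}_i\cdot{\rm k}_j=\K_{ix}\K_{jx}+\epsilon^2\K_{iy}\K_{jy}$, which yields the announced $d_{123}=d_{123}^x+\epsilon^2 d_{123}^R+\calO(\epsilon^4)$, and correspondingly $d_{123}^{-1}=(d_{123}^x)^{-1}-\epsilon^2 d_{123}^R(d_{123}^x)^{-2}+\calO(\epsilon^4)$.

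At the leading order $\epsilon^2$, the prefactor $({\rm k}_1\cdot{\rm k}_3+|{\rm k}_1||{\rm k}_3|)/d_{123}$ simplifies dramatically. The numerator $\K_{1x}\K_{3x}+|\K_{1x}||\K_{3x}|$ vanishes when $\K_{1x}\K_{3x}<0$ and equals $2|\K_{1x}||\K_{3x}|$ when $\K_{1x}\K_{3x}>0$. On the latter region, the constraint $\K_{1x}+\K_{2x}+\K_{3x}=0$ forces $\sgn(\K_{2x})=-\sgn(\K_{1x})=-\sgn(\K_{3x})$, hence $|\K_{2x}|=|\K_{1x}|+|\K_{3x}|$ and $d_{123}^x=-4g^2|\K_{1x}||\K_{3x}|$. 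Substituting into the $\eta_1\eta_2\xi_3$, $\eta_1\xi_2\eta_3$ and $\xi_1\xi_2\xi_3$ parts of (\ref{K3-fourier}) and then symmetrizing the two $\eta\eta\xi$ monomials by the relabelling $(1,2,3)\to(1,3,2)$ shows that the $\xi\xi\xi$ coefficient $|{\rm k}_1|-|{\rm k}_2|+|{\rm k}_3|$ identically vanishes to this order, while the combined $\etan\etan\xin$ coefficient reduces to $-\tfrac12\sgn(\K_{1x})\sgn(\K_{2x})|\K_{3x}|$. This reproduces the 2D expression of \cite{CS16} and yields the first line of (\ref{K3-fourier-new}).

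At order $\epsilon^4$ there are three sources of corrections to gather: (a) the $\K_{jy}^2/(2|\K_{jx}|)$ remainders inside the various $|{\rm k}_j|$ appearing in the numerators of (\ref{K3-fourier}); (b) the transverse contribution $\epsilon^2\K_{iy}\K_{jy}$ from the dot products; and (c) the correction $-\epsilon^2 d_{123}^R/(d_{123}^x)^2$ from the denominator expansion. Collecting these, using the support identity $|\K_{2x}|=|\K_{1x}|+|\K_{3x}|$ where $d_{123}^x\ne 0$, and employing the transverse delta constraint $\K_{1y}+\K_{2y}+\K_{3y}=0$ together with permutation symmetries $(1,2,3)\to(1,3,2)$ to combine monomials, gives the $\etan_1\etan_2\xin_3$ contribution with coefficient $R_{123}$ and the $\xin_1\xin_2\xin_3$ contribution with coefficient $Q_{123}$ as stated.

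The main obstacle is clearly Step (c) of the last paragraph: the Taylor expansion of $1/d_{123}$ generates several terms each carrying ratios of $\K_{jy}^2$ to $|\K_{jx}|$, and the sign functions proliferate because $d_{123}^x$ is only piecewise smooth across $\K_{jx}=0$. Reducing this mass of terms into the compact two-line expression for $R_{123}$ and the single-line $Q_{123}$ is a bookkeeping exercise where the substitution $|\K_{2x}|=|\K_{1x}|+|\K_{3x}|$, valid only on the support determined by the leading-order numerator, must be applied consistently and only after symmetrizing, since some contributions are singular on the complementary region but cancel after symmetrization.
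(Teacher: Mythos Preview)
Your approach is correct and is precisely the calculation the paper sets up but does not carry out in detail: the paper only records the expansion $d_{123}=d_{123}^x+\epsilon^2 d_{123}^R+\calO(\epsilon^4)$ and the reduction $d_{123}^x=-4g^2|\K_{1x}||\K_{3x}|$ on the appropriate region, then states that ``the above computations allow us to derive the expansion of $K'^{(3)}$'' and quotes the result. Your proposal supplies exactly the missing steps --- the Jacobian/$\delta$ scaling giving the global $\epsilon^2$, the Taylor expansion of $|{\rm k}_j|$ and ${\rm k}_i\cdot{\rm k}_j$, the leading-order identification with the 2D formula of \cite{CS16}, and the gathering of the three $\calO(\epsilon^2)$ correction sources --- so there is no methodological difference to discuss.

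One small clarification on your last paragraph: at order $\epsilon^4$ there are genuine contributions living \emph{off} the leading-order support $\{\sgn\K_{1x}=\sgn\K_{3x}\}$, coming from the $\epsilon^2$-correction to the numerator ${\rm k}_1\cdot{\rm k}_3+|{\rm k}_1||{\rm k}_3|$ divided by the nonvanishing leading denominator $d_{123}^x$ on regions A and C. These are regular (no singular cancellation is needed there), but the identity $|\K_{2x}|=|\K_{1x}|+|\K_{3x}|$ does not hold on those regions, so you must keep track of the three sign sectors separately before symmetrizing in $(1,2)$. With that in mind, the bookkeeping closes and produces the stated $R_{123}$ and $Q_{123}$.
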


\subsection{Reconstruction procedure}

Retaining only the leading-order term in (\ref{K3-fourier-new}) for $K'^{(3)}$, the new coordinates are obtained as solutions of 
\begin{equation*}\label{NFT-flow-fourier-new-eta}
\partial_s \etan_{\K} =  
\epsilon^{-1} \partial_{\xin_{-\K}} K^{'(3)}
 =  - \frac{\epsilon}{4\pi} 
\int \sgn (\K_{1x}) \sgn (\K_{2x}) |\K_{x}| 
\etan_1   \etan_2  
\delta_{1+2-\K} d\K_{12} \,,
\end{equation*}
\begin{equation*}\label{NFT-flow-fourier-new-xi}
\partial_s \xin_{\K} =  
- \epsilon^{-1} \partial_{\etan_{-\K}} K^{'(3)} =
 - \frac{\epsilon}{2\pi} 
\int \sgn (\K_{x}) \sgn (\K_{1x}) |\K_{2x}| 
  \etan_1   \xin_2  
\delta_{1+2-\K} d\K_{12} \,.
\end{equation*}
Back to the physical space, 
\begin{equation*}
\label{inverse-FT-relation-new}
(\eta (\bx), \xi (\bx)) = \frac{\epsilon}{2 \pi} 
\int ( \etan_{\kappa}, 
\xin_{\kappa})
e^{i \kappa \cdot (x, \epsilon y)}
d \kappa \,,
\end{equation*}
satisfy the evolution equations
\[
\partial_s \eta(\bx) = - \frac{1}{2} |D_x| \left( \sgn(D_x) \eta \right)^2 \,, \quad
\partial_s \xi(\bx) =  - \sgn (D_x) \big( (\sgn(D_x) \eta) (|D_x|\xi) \big) \,.
\]
Via the new variables 
$
\etah := -i \, \sgn (D_x) \eta
$
and
$
\xih := -i \, \sgn (D_x) \xi 
$
involving the Hilbert $x$-transform,
this system simplifies to 
\begin{equation} \label{NFT-flow-physical-etah}
\partial_s \etah(\bx) = -  (\partial_x \etah) \etah \,, \quad 
\partial_s \xih(\bx) = - \etah \partial_x \xih \,,
\end{equation}
which preserves the canonical Hamiltonian structure as in the two-dimensional case \cite{CGS21,CS16}.
The equation for $\etah$ is the Burgers equation while the equation for $\xih$ is its linearization along the Burgers flow.
Integrating (\ref{NFT-flow-physical-etah}) up to $s = -1$, with initial conditions at $s=0$ being the transformed variables, 
provides a reconstruction of the actual free surface.

\begin{figure}
\centering
\subfloat{\includegraphics[width=.5\linewidth]{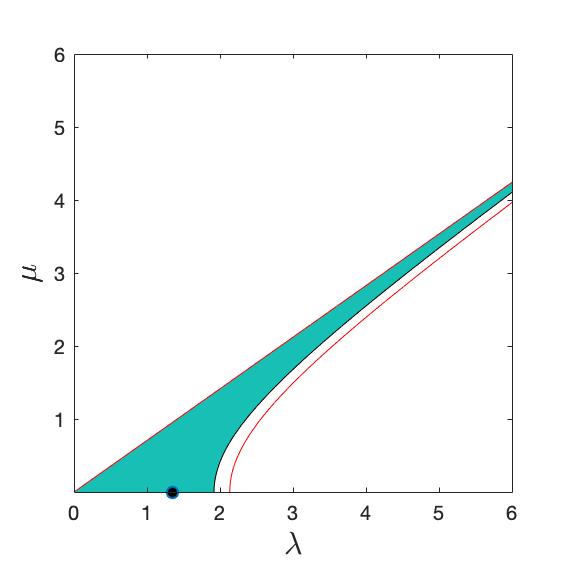}}
\hfill
\subfloat{\includegraphics[width=.5\linewidth]{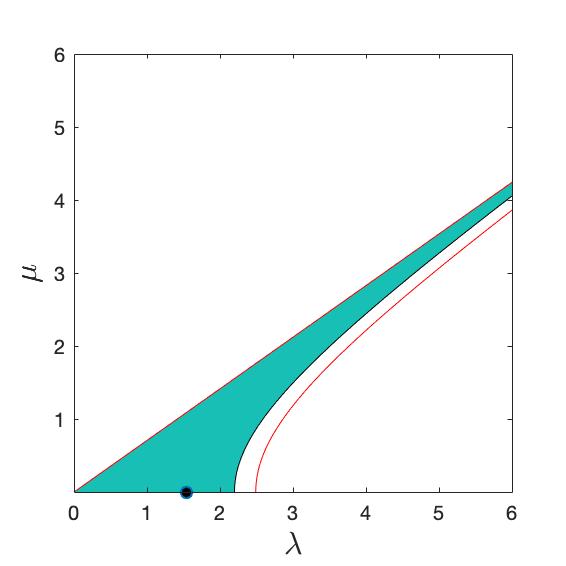}}
\caption{Region of modulational instability (shaded area) for \eqref{dysthe-eqn}. 
The corresponding region for the NLS equation is the extension represented by a red curve. 
The black dot represents the mode associated with maximum growth.
Left panel: $(B_0,k_0) = (0.003,10)$. Right panel: $(B_0,k_0) = (0.0035,10)$.}
\label{BF_inst}
\end{figure}

\section{Numerical results}

We present numerical results to illustrate the performance of our Hamiltonian approach
in the context of modulational instability of Stokes waves.
We first present a theoretical analysis and then show some numerical simulations in comparison to other models.

\subsection{Stability of Stokes waves}

Equation \eqref{dysthe-eqn} admits a uniform solution of the form
\[
u_0(t)  = B_0 e^{-i (\omega_0 + \varepsilon^2 k_0^3 B_0^2) t} \,, 
\]
representing a progressive Stokes wave ($B_0$ being a positive real constant).
Such a solution is known to be linearly unstable with respect to sideband perturbations,
which is referred to as modulational or Benjamin--Feir (BF) instability.
We provide a version of this analysis based on \eqref{dysthe-eqn} for the three-dimensional problem.

Inserting a perturbation of the form
\[
u(\bX,t) = u_0(t) \big[ 1 + B(\bX,t) \big] \,,
\]
where
\[
B(\bX,t) = B_1 e^{\Omega t + i (\lambda X + \mu Y)} 
+ B_2 e^{\overline \Omega t - i (\lambda X + \mu Y)} \,,
\]
and $B_1, B_2$ are complex coefficients, we find that the condition 
$\operatorname{Re}(\Omega) \neq 0$ for instability implies
\begin{equation} \label{BFCond}
\left( \frac{\lambda^2}{2} - \mu^2 \right) \left[ 2 k_0^2 B_0^2 \left( k_0 - \varepsilon \frac{\lambda^2}{\sqrt{\lambda^2 + \mu^2}} \right)
- \frac{\omega_0}{4 k_0^2} \left( \frac{\lambda^2}{2} - \mu^2 \right) \right] > 0 \,.
\end{equation}
This is a tedious but straightforward calculation for which we skip the details.
We refer the reader to \cite{D79,TKDV00} for similar calculations.

Figure \ref{BF_inst} shows instability regions in the $(\lambda,\mu)$-plane as predicted by condition \eqref{BFCond}
for $k_0 = 10$ and two different amplitudes $B_0 = 0.003$ and $0.0035$.
Hereafter, all the variables are rescaled to absorb $\varepsilon$ back into their definition,
and all the equations are non-dimensionalized so that $g = 1$.
These two plots correspond to wave steepnesses $\varepsilon = k_0 A_0 = 0.075$ and $0.088$ respectively,
based on the relationship
\[
B_0 = A_0 \sqrt[4]{\frac{g}{4k_0}} \,,
\]
between the envelope amplitude $B_0$ and the surface amplitude $A_0$ according to \eqref{zeta-variable-fourier}.
In both cases, we see that the instability region is unbounded,
extending in the form of a narrow strip to higher wavenumbers from the origin.
Maximum growth (strongest instability) is achieved at $\mu = 0$ and $\lambda \simeq 1.5$
which is a longitudinal long-wave mode.
The instability region for the NLS equation, if $\varepsilon = 0$ in \eqref{BFCond}, turns out to be larger
and its extent relative to the Dysthe prediction is represented by a red curve in Fig. \ref{BF_inst}.

\subsection{Simulations and comparisons}

To validate our Hamiltonian approach, we test it against the full water wave system \eqref{ww-hamiltonian-equation}
which is given more explicitly by
\begin{equation} \label{HamMotionEqn}
\partial_t \eta = G(\eta) \xi \,, \quad
\partial_t \xi = -g \eta - \frac{1}{2} |\partial_\bx \xi|^2
+ \frac{\big( G(\eta) \xi + \partial_\bx \eta \cdot \partial_\bx \xi \big)^2}{2(1 + |\partial_\bx \eta|^2)} \,.
\end{equation}
We also compare our model predictions to solutions of the classical (non-Hamiltonian) Dysthe equation 
\begin{eqnarray} \label{Trulsen}
i \, \partial_t A & = & -\frac{i \omega_0}{2k_0} \partial_x A + \frac{\omega_0}{8 k_0^2} \partial_x^2 A 
- \frac{\omega_0}{4 k_0^2} \partial_y^2 A
+ \frac{1}{2} \omega_0 k_0^2 |A|^2 A + \frac{i \omega_0}{16 k_0^3} \partial_x^3 A \nonumber \\
& & - \frac{3 i \omega_0}{8 k_0^3} \partial_x \partial_y^2 A - \frac{3 i}{2} \omega_0 k_0 |A|^2 \partial_x A
- \frac{i}{4} \omega_0 k_0 A^2 \partial_x \overline A + k_0 A \, \partial_x \Phi \,,
\end{eqnarray}
where 
\[
\Phi = \frac{1}{2} \omega_0 \partial_x |D|^{-1} |A|^2 \,, \quad \partial_x \Phi = \frac{1}{2} \omega_0 \partial_x^2 |D|^{-1} |A|^2 \,,
\] 
denote contributions from the wave-induced mean flow.
In this formulation, the surface elevation and velocity potential are reconstructed perturbatively 
in terms of the Stokes expansion
\begin{eqnarray} \label{Stokes}
\eta(\bx,t) & = & \frac{1}{2\omega_0} \partial_x \Phi + \operatorname{Re}\!\left[ A e^{i \theta} 
+ \frac{1}{2} (k_0 A^2 - i A \partial_x A) e^{2 i \theta} + \frac{3}{8} k_0^2 A^3 e^{3 i \theta} \right] 
+ \ldots \nonumber  \\
\varphi(\bx,z,t) & = & \Phi + \operatorname{Re}\!\left[ \Big( -\frac{i \omega_0}{k_0} A 
+ \frac{\omega_0}{2 k_0^2} \partial_x A + \frac{3 i \omega_0}{8 k_0^3} \partial_x^2 A \right. \nonumber \\
& & \left. \qquad \qquad - \frac{i \omega_0}{4 k_0^3} \partial_y^2 A + \frac{i}{8} \omega_0 k_0 |A|^2 A \Big) 
e^{i \theta} e^{k_0 z} \right] + \ldots 
\end{eqnarray}
up to third harmonics, as typically reported in the literature \cite{GT11,T06}.
The phase function is given by $\theta = k_0 x - \omega_0 t$.
As mentioned earlier, Eqs. \eqref{Trulsen} and \eqref{Stokes} are expressed in terms of unscaled variables
for the purposes of this comparative study.

The full equations \eqref{HamMotionEqn} are solved numerically following a high-order spectral approach \cite{CS93}.
They are discretized in space by a pseudo-spectral method based on the FFT.
The computational domain spans $0 \le x \le L_x$, $0 \le y \le L_y$ with doubly periodic boundary conditions
and is divided into a regular mesh of $N_x \times N_y$ collocation points.
The DNO is computed via its series expansion \eqref{series} but, by analyticity, a small number $m$ of terms is sufficient
to achieve highly accurate results. The number $m = 4$ is selected based on previous extensive tests \cite{GN07,XG09}.
Time integration of \eqref{HamMotionEqn} is carried out in the Fourier space 
so that the linear terms can be solved exactly by the integrating factor technique.
The nonlinear terms are integrated in time by using a 4th-order Runge--Kutta scheme with constant step $\Delta t$.
The same numerical methods are applied to the envelope equations \eqref{dysthe-eqn} and \eqref{Trulsen},
as well as to their reconstruction formulas, with the same resolutions in space and time.
In particular, Burgers equation \eqref{NFT-flow-physical-etah} is integrated in $s$ by using the same step size $\Delta s = \Delta t$.
As noted in our previous work on the two-dimensional problem \cite{CGS21},
the additional cost of solving this relatively simple equation is insignificant.

Initial conditions of the form
\[
u(\bx,0) = B_0 \big[ 1 + 0.1 \cos(\lambda x) \cos(\mu y) \big] \,, \quad 
A(\bx,0) = A_0 \big[ 1 + 0.1 \cos(\lambda x) \cos(\mu y) \big] \,,
\]
are specified to define a perturbed Stokes wave for \eqref{dysthe-eqn} and \eqref{Trulsen} respectively.
Accordingly, it is important to ensure that appropriate initial conditions are prescribed for \eqref{HamMotionEqn}:
Eqs. \eqref{NFT-flow-physical-etah} with $u(\bx,0)$ (resp. Eqs. \eqref{Stokes} with $A(\bx,0)$) are used 
when the full equations are compared to predictions from \eqref{dysthe-eqn} (resp. \eqref{Trulsen}).

\begin{figure}
\centering
\subfloat{\includegraphics[width=.5\linewidth]{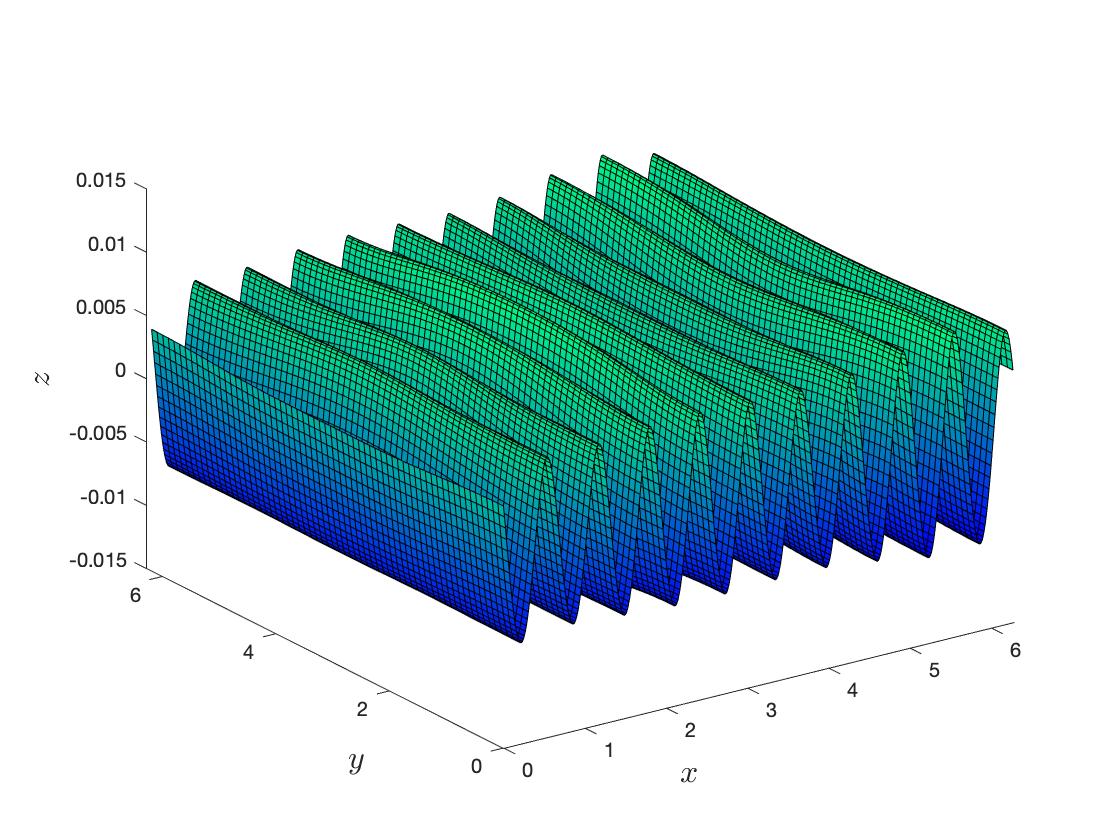}}
\hfill
\subfloat{\includegraphics[width=.5\linewidth]{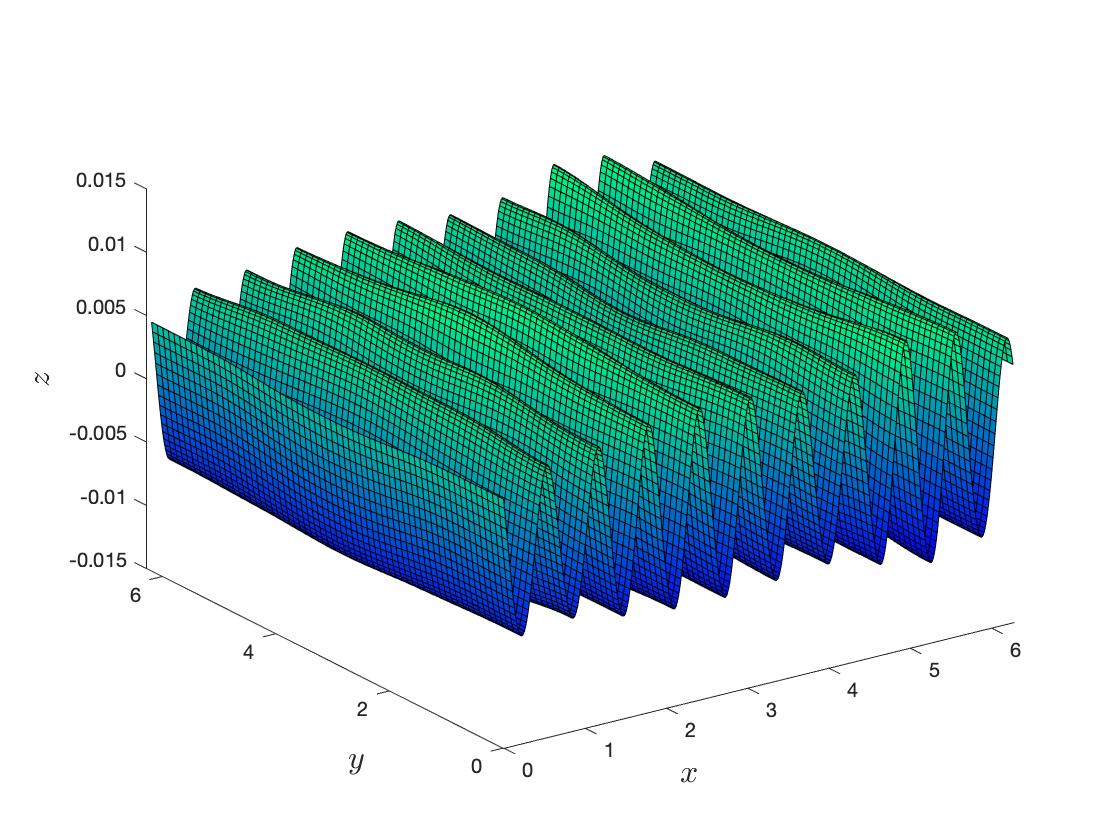}}
\caption{Surface elevation $\eta$ at $t = 2500$ for $B_0 = 0.003$, $k_0 = 10$ and $(\lambda,\mu) = (1,1)$.
Left panel: Hamiltonian Dysthe equation. Right panel: fully nonlinear equations.}
\label{surf_a0003_t2500}
\end{figure}

\begin{figure}
\centering
\subfloat[]{\includegraphics[width=.5\linewidth]{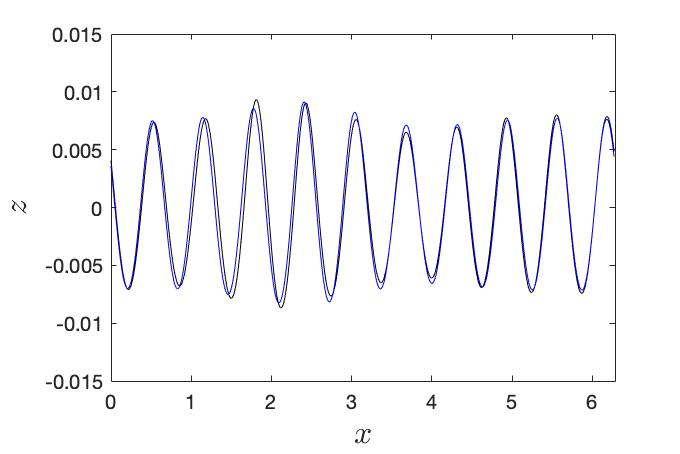}}
\hfill
\subfloat[]{\includegraphics[width=.5\linewidth]{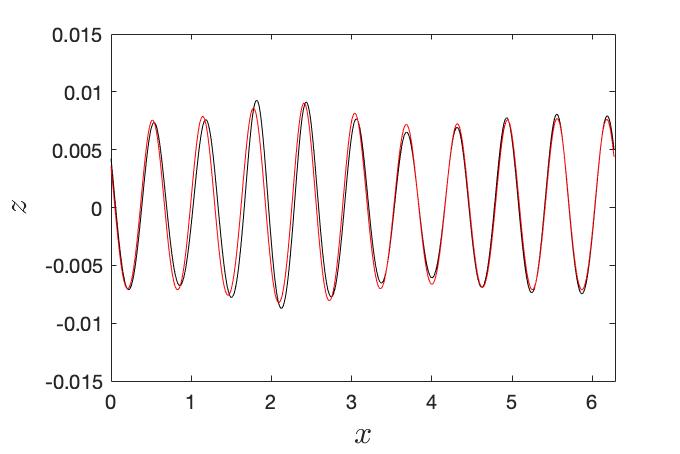}}
\caption{Comparison on $\eta$ between the fully and weakly nonlinear solutions in the cross-section $y = L_y/2$ 
at $t = 2500$ for $B_0 = 0.003$, $k_0 = 10$ and $(\lambda,\mu) = (1,1)$.
Left panel: Hamiltonian Dysthe equation in blue. Right panel: classical Dysthe equation in red.
The black curve represents the fully nonlinear solution.}
\label{comp_y12_a0003_t2500}
\end{figure}

The following tests focus on the two cases considered in the previous stability analysis.
The initial wave parameters are $k_0 = 10$, $B_0 = 0.003$ or $0.0035$,
and $(\lambda,\mu) = (1,1)$ so that the initial condition is a Stokes wave under 
three-dimensional long-wave (i.e. sideband) perturbations.
The computational domain is taken to be of size $L_x \times L_y = 2\pi \times 2\pi$.
The spatial and temporal resolutions are set to $\Delta x = 0.012$ ($N_x = 512$), $\Delta y =  0.098$ ($N_y = 64$)
and $\Delta t = 0.005$.
This difference in resolution between $x$ and $y$ reflects the choice of $x$ as the preferred direction of wave propagation.

\begin{figure}
\centering
\subfloat{\includegraphics[width=.5\linewidth]{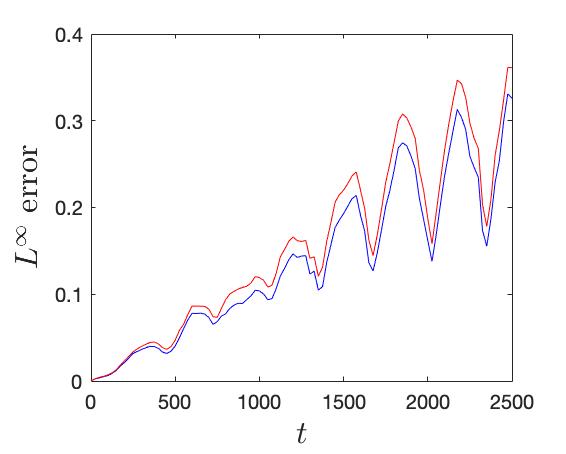}}
\hfill
\subfloat{\includegraphics[width=.5\linewidth]{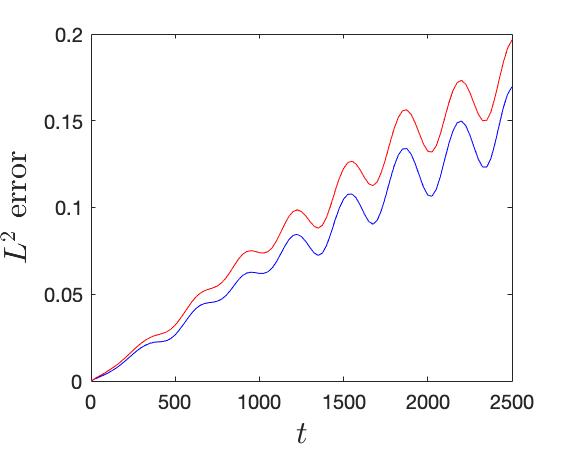}}
\caption{Relative errors on $\eta$ versus time between the fully and weakly nonlinear solutions  
for $B_0 = 0.003$, $k_0 = 10$ and $(\lambda,\mu) = (1,1)$.
The blue curve represents the Hamiltonian Dysthe equation while the red curve represents the classical Dysthe equation.
Left panel: $L^\infty$ error. Right panel: $L^2$ error.}
\label{error_a0003_t2500}
\end{figure}

For the first case with small initial data ($B_0 = 0.003$), 
we examine the wave dynamics over a long time up to $t = 2500 = \calO(\varepsilon^{-3})$
given the initial steepness $\varepsilon = 0.075$.
This corresponds to the time scale over which the Dysthe approximation with such initial data is supposed to be valid.
Figure \ref{surf_a0003_t2500} shows the full surface elevation $\eta$ at $t = 2500$ 
as predicted from \eqref{dysthe-eqn} and \eqref{HamMotionEqn}. 
A more direct comparison between these two solutions is reported in Fig. \ref{comp_y12_a0003_t2500}(a)
along the cross-section $y = L_y/2$.
A similar test for the classical Dysthe equation \eqref{Trulsen} is depicted in Fig. \ref{comp_y12_a0003_t2500}(b).
Under such a mild disturbance, effects of modulational instability are not felt yet.
In both cases, the wave profiles remain close to their initial configuration and, as a result, 
both plots look quite similar.
A more quantitative assessment is provided in Fig. \ref{error_a0003_t2500} 
which displays the time evolution of the relative $L^\infty$ and $L^2$ errors 
\begin{equation} \label{errors}
\frac{\| \eta_f - \eta_w \|_\infty}{\| \eta_f \|_\infty} \,, \quad \frac{\| \eta_f - \eta_w \|_2}{\| \eta_f \|_2} \,,
\end{equation}
on $\eta$ between the fully ($\eta_f$) and weakly ($\eta_w$) nonlinear solutions.
The low values confirm that both Dysthe models perform very well in this case,
with the predictions from \eqref{dysthe-eqn} being slightly better than those from \eqref{Trulsen}.
This is consistent with results for the two-dimensional problem
and is expected considering that the surface reconstruction for \eqref{dysthe-eqn} 
is a non-perturbative procedure as opposed to the perturbative calculation for \eqref{Trulsen}.

\begin{figure}
\centering
\subfloat[]{\includegraphics[width=.5\linewidth]{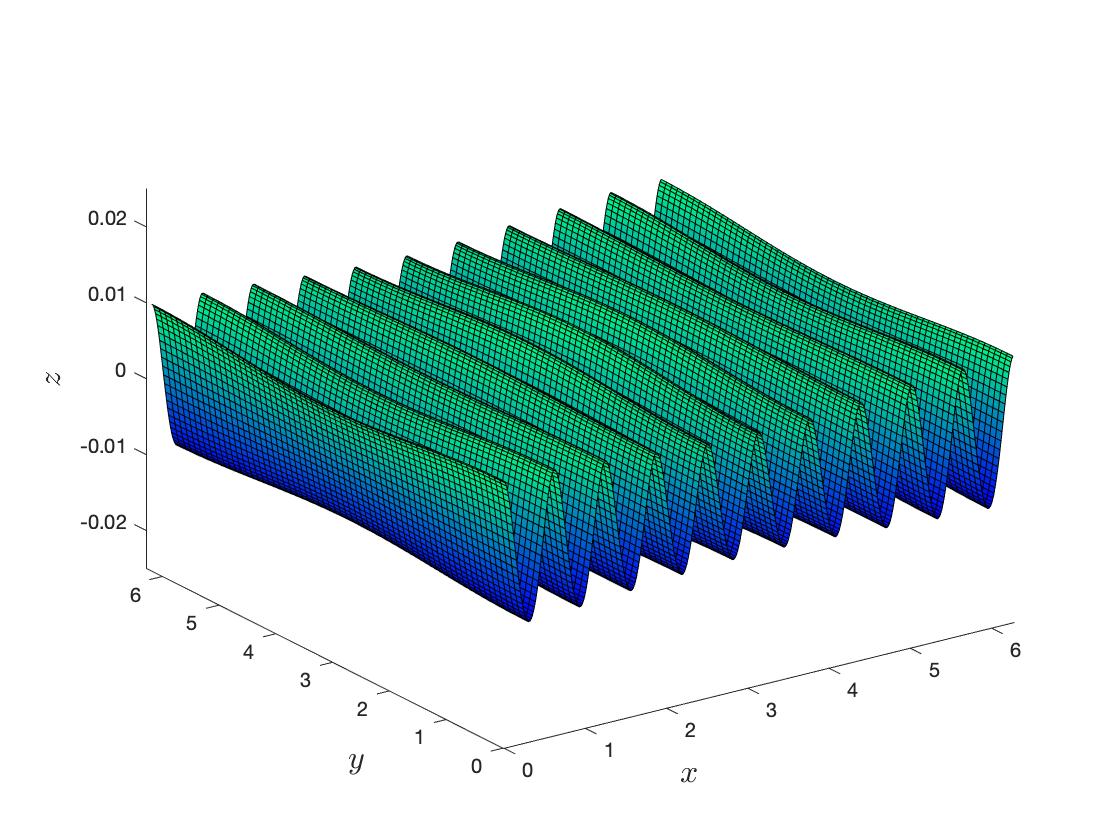}}
\hfill
\subfloat[]{\includegraphics[width=.5\linewidth]{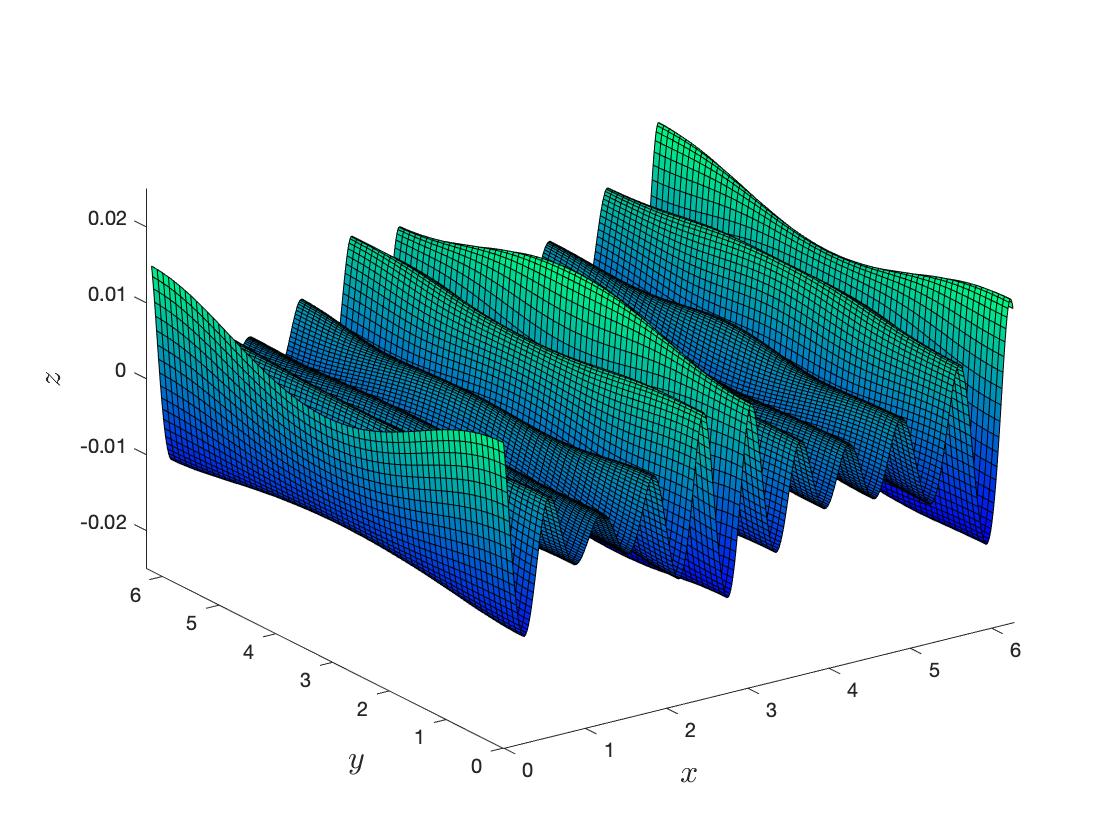}}
\hfill
\subfloat[]{\includegraphics[width=.5\linewidth]{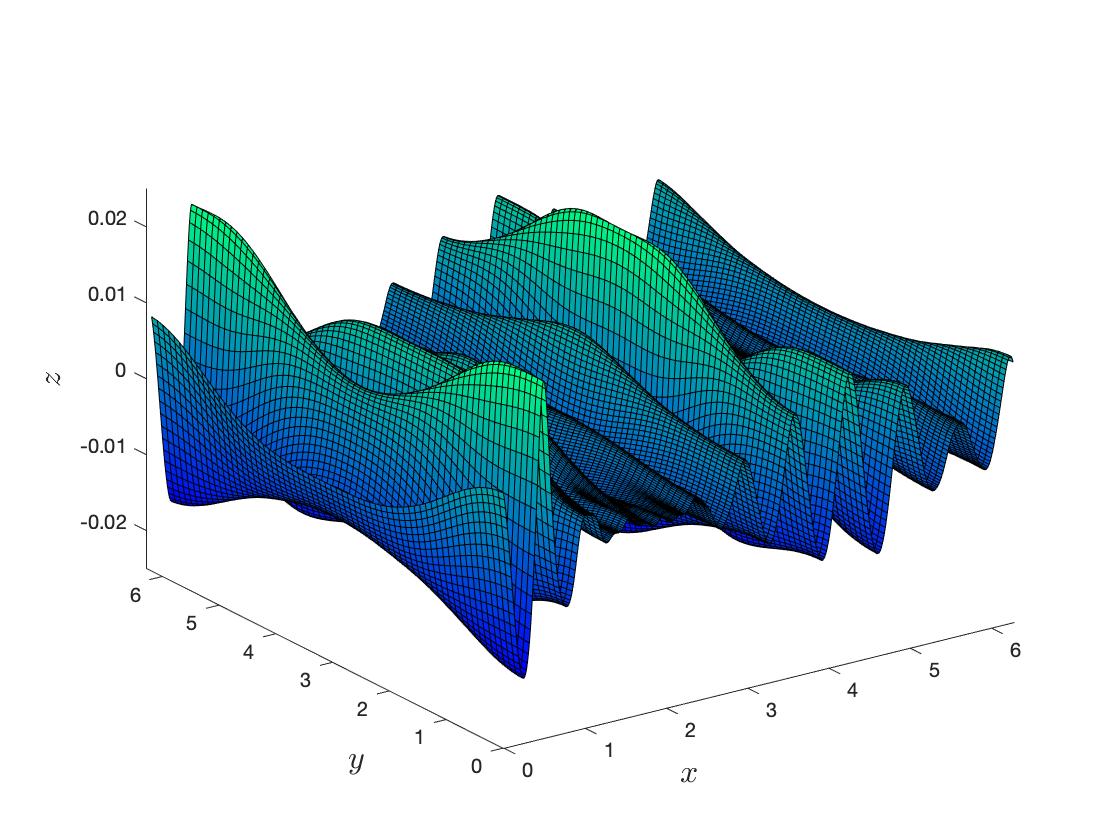}}
\hfill
\subfloat[]{\includegraphics[width=.5\linewidth]{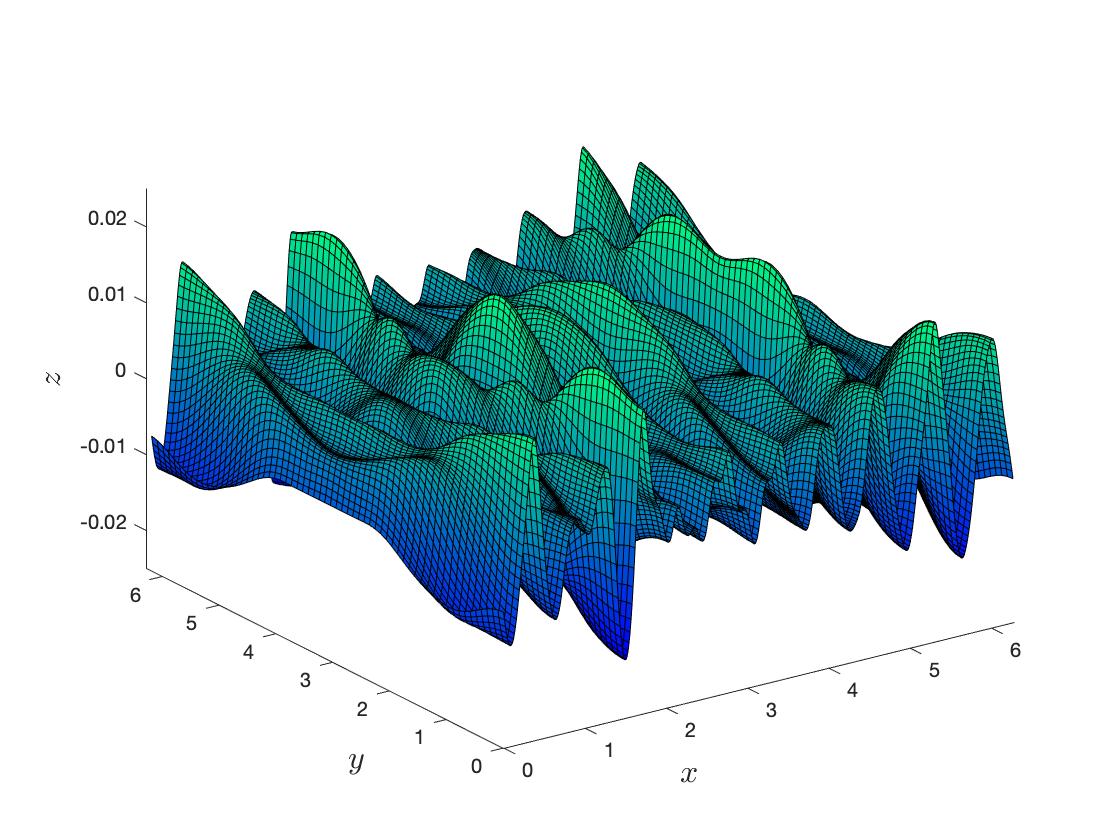}}
\hfill
\subfloat[]{\includegraphics[width=.5\linewidth]{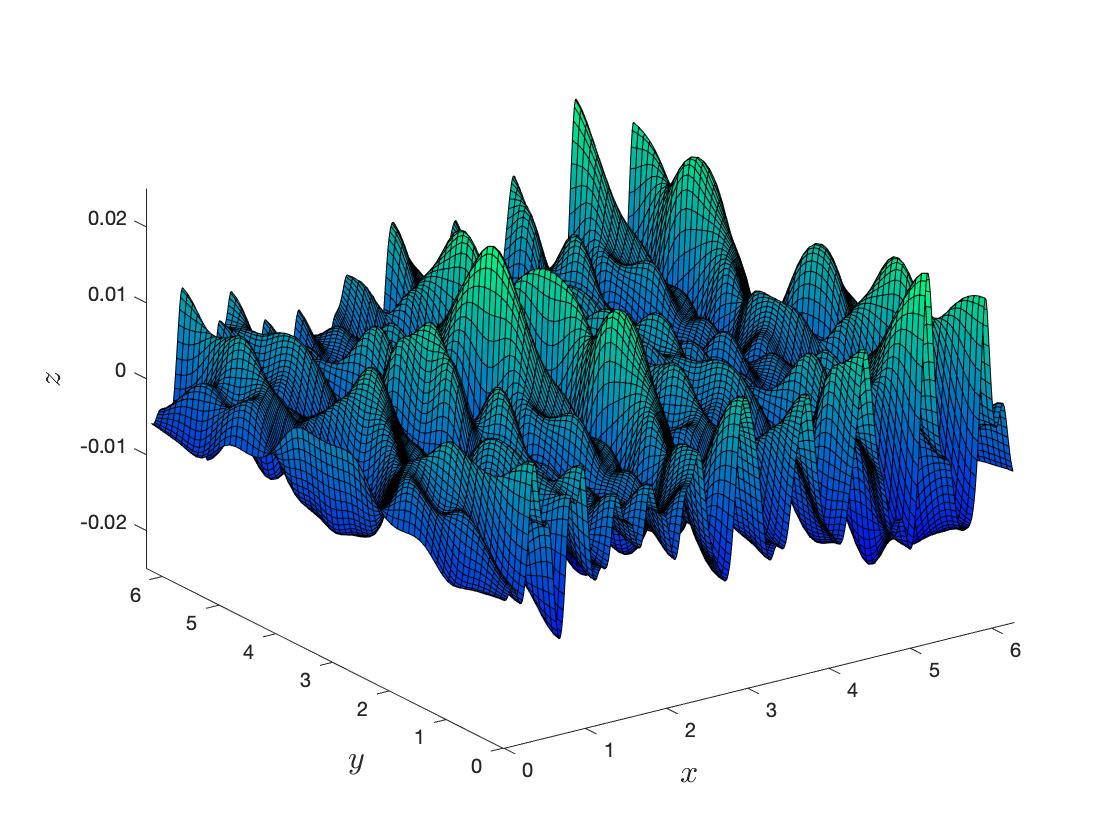}}
\hfill
\subfloat[]{\includegraphics[width=.5\linewidth]{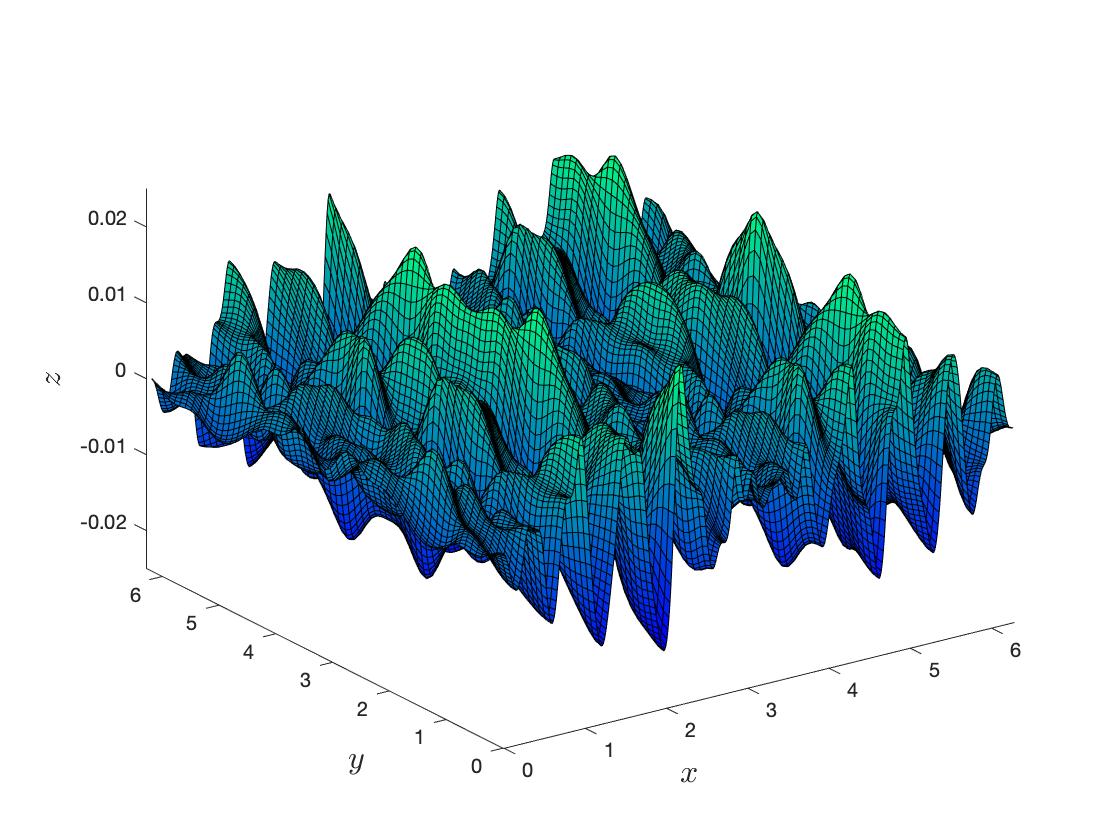}}
\caption{Surface elevation $\eta$ as predicted from the Hamiltonian Dysthe equation
at (a) $t = 0$, (b) $t = 540$, (c) $t = 680$, (d) $t = 810$, (e) $t = 930$, (f) $t = 1000$
for $B_0 = 0.0035$, $k_0 = 10$ and $(\lambda,\mu) = (1,1)$.}
\label{hamdys3_surf_a00035}
\end{figure}

The second case with larger initial data ($B_0 = 0.0035$, $\varepsilon = 0.088$)
is more prone to modulational instability.
Snapshots of the full surface elevation up to $t = 1000$ are presented in Fig. \ref{hamdys3_surf_a00035}.
As expected, the Stokes wave becomes unstable under the incipient development of the longitudinal sideband mode 
$\lambda = 1$ (and near mode $\lambda = 2$) around $t = 680$. 
However, unlike the two-dimensional situation where a quasi-recurrent cycle of modulation-demodulation
typically takes place over a long time \cite{CGS21}, these perturbations quickly trigger the excitation of higher sideband modes
in both horizontal directions, leading to the emergence of an irregular short-crested wave field.
This phenomenon is observed in both our weakly and fully nonlinear simulations,
which is consistent with results from previous numerical studies \cite{LM87,MMMSY81}.
In particular, computations by McLean et al. \cite{MMMSY81} showed that three-dimensional instabilities 
become dominant when the wave steepness is sufficiently large.
In the present modulational regime, the gradual excitation of higher modes during wave evolution
may be anticipated based on the stability analysis from Sec. 7.1,
which reveals that the instability region for three-dimensional perturbed Stokes waves is not confined to the first few modes 
but extends over a wide range in the $(\lambda,\mu)$-plane.
The energy initially contained in low sidebands can leak to higher unstable modes, 
similar to the scenario reported by Martin and Yuen \cite{MY80} in the context of the NLS equation.
The resulting choppy sea appearance as illustrated in Fig. \ref{hamdys3_surf_a00035} at $t = 930$ and $1000$
is an indication of the limited range of applicability of the narrowband approximation
in this three-dimensional case, even for moderate initial steepnesses.

\begin{figure}
\centering
\subfloat{\includegraphics[width=.33\linewidth]{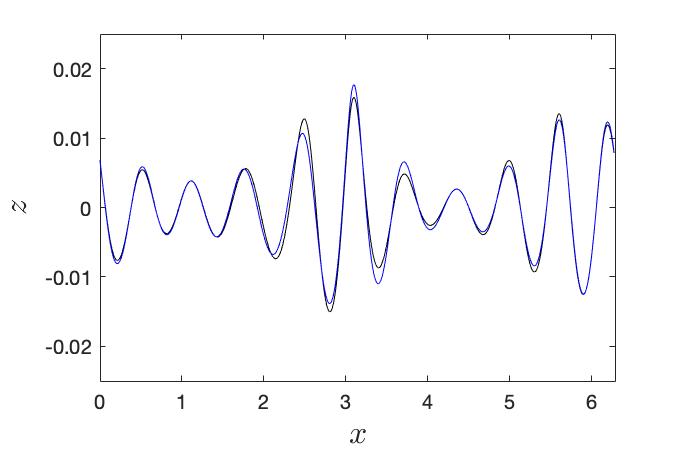}}
\hfill
\subfloat{\includegraphics[width=.33\linewidth]{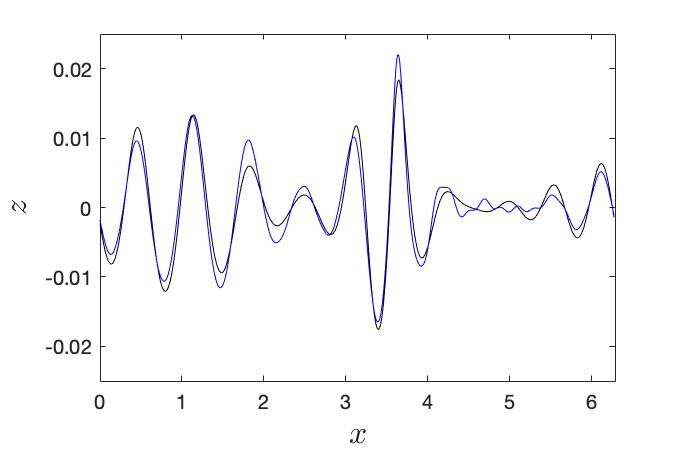}}
\hfill
\subfloat{\includegraphics[width=.33\linewidth]{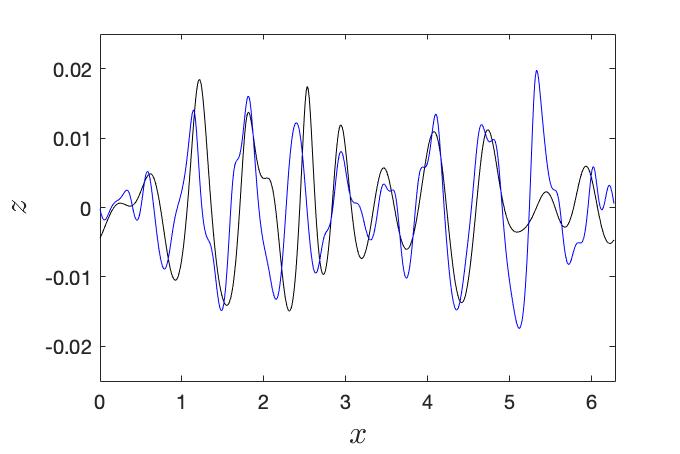}}
\hfill
\subfloat{\includegraphics[width=.33\linewidth]{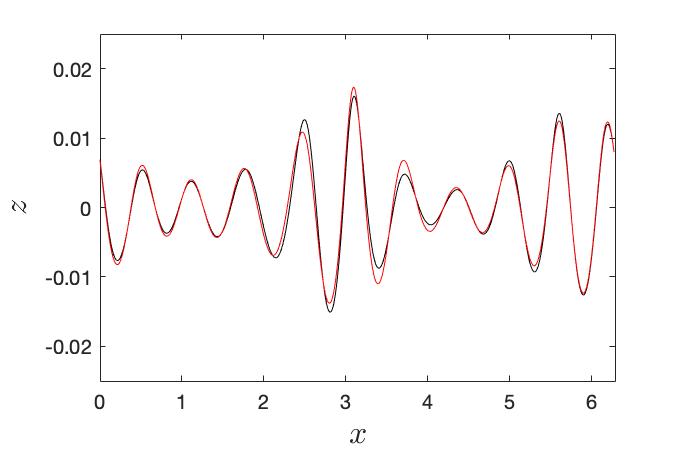}}
\hfill
\subfloat{\includegraphics[width=.33\linewidth]{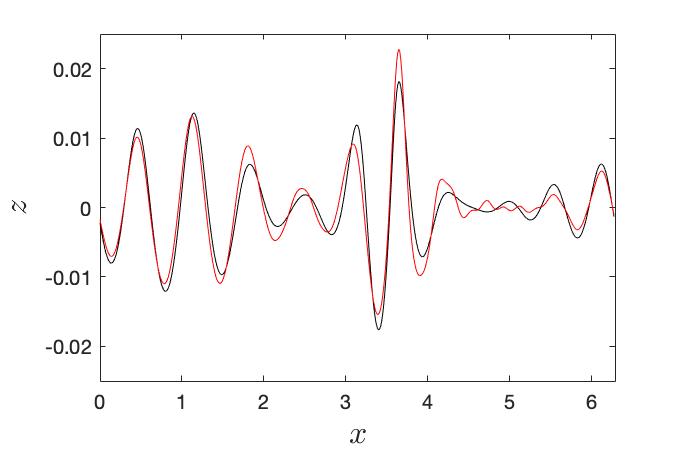}}
\hfill
\subfloat{\includegraphics[width=.33\linewidth]{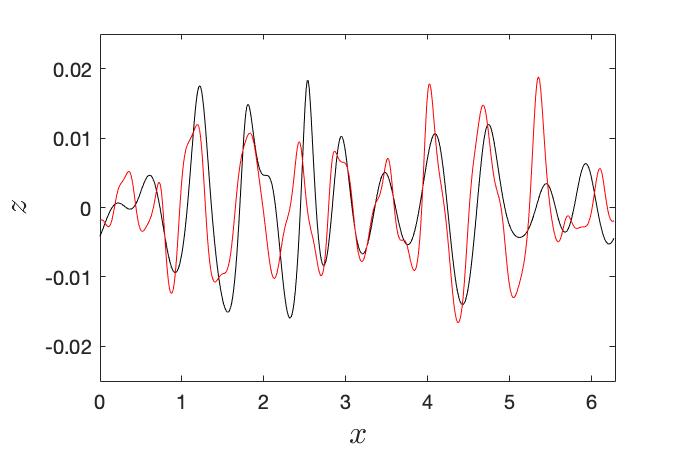}}
\caption{Comparison on $\eta$ between the fully and weakly nonlinear solutions in the cross-section $y = L_y/2$ 
at $t = 540$, $680$, $1000$ (from left to right) for $B_0 = 0.0035$, $k_0 = 10$ and $(\lambda,\mu) = (1,1)$.
Upper panels: Hamiltonian Dysthe equation in blue. Lower panels: classical Dysthe equation in red.
The black curve represents the fully nonlinear solution.}
\label{comp_y12_a00035}
\end{figure}

\begin{figure}
\centering
\subfloat{\includegraphics[width=.33\linewidth]{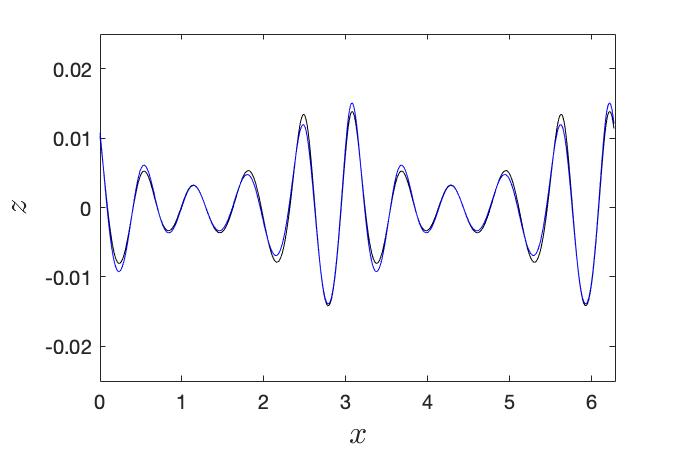}}
\hfill
\subfloat{\includegraphics[width=.33\linewidth]{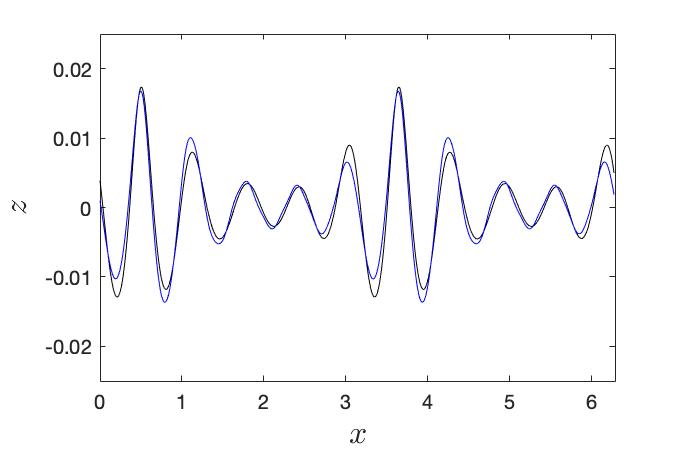}}
\hfill
\subfloat{\includegraphics[width=.33\linewidth]{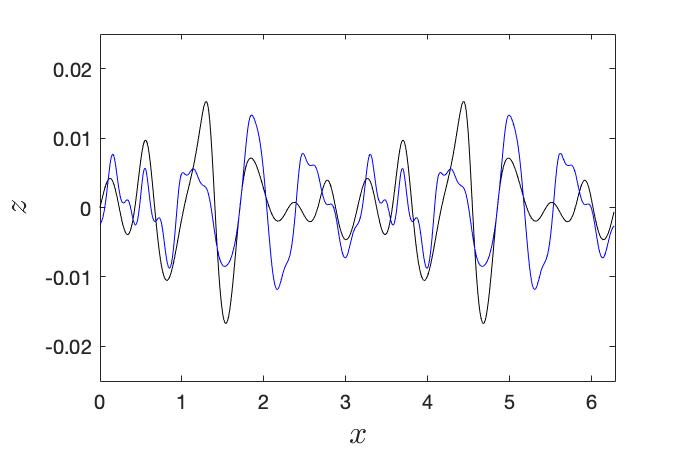}}
\hfill
\subfloat{\includegraphics[width=.33\linewidth]{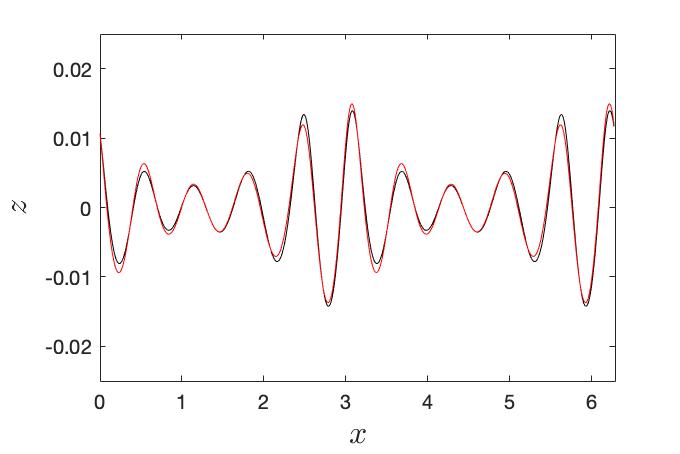}}
\hfill
\subfloat{\includegraphics[width=.33\linewidth]{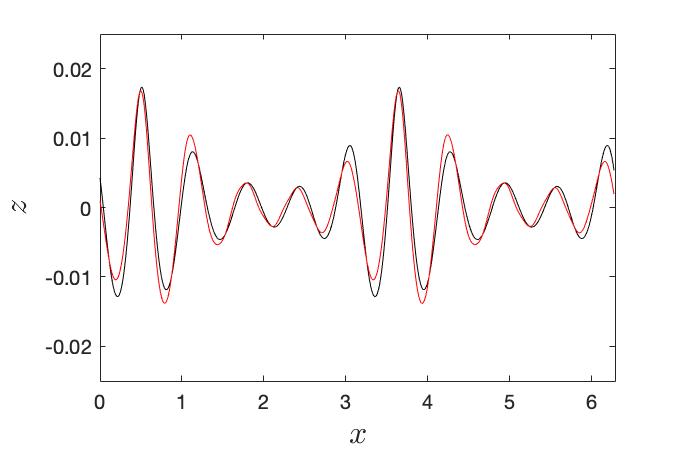}}
\hfill
\subfloat{\includegraphics[width=.33\linewidth]{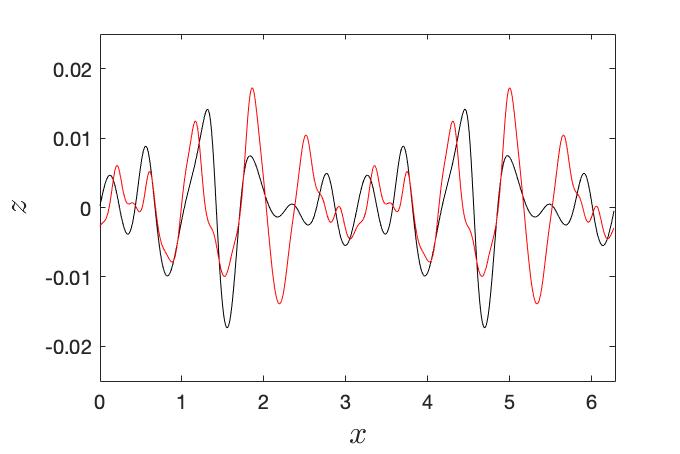}}
\caption{Comparison on $\eta$ between the fully and weakly nonlinear solutions in the cross-section $y = 3L_y/4$ 
at $t = 540$, $680$, $1000$ (from left to right) for $B_0 = 0.0035$, $k_0 = 10$ and $(\lambda,\mu) = (1,1)$.
Upper panels: Hamiltonian Dysthe equation in blue. Lower panels: classical Dysthe equation in red.
The black curve represents the fully nonlinear solution.}
\label{comp_y34_a00035}
\end{figure}

Comparison of \eqref{dysthe-eqn} and \eqref{Trulsen} with \eqref{HamMotionEqn}
is given in Figs. \ref{comp_y12_a00035} and \ref{comp_y34_a00035} 
along the cross-sections $y = L_y/2$ and $3 L_y/4$ respectively.
Snapshots of $\eta$ at $t = 540$ (early stage of BF instability), $t = 680$ (around the time of BF maximum growth)
and $t = 1000$ (short-crested wave field) are presented,
where we can clearly see the development of the longitudinal mode $\lambda = 1$ and near mode $\lambda = 2$
reaching a maximum amplitude of about $0.02$.
As suspected earlier, discrepancies between the weakly and fully nonlinear solutions are quite pronounced at $t = 1000$
along both cross-sections. This is especially true for the crest and trough heights, 
while there is still good agreement on the phase overall.
Differences between the classical and Hamiltonian Dysthe solutions also become more noticeable as time goes by.

\begin{figure}
\centering
\subfloat{\includegraphics[width=.5\linewidth]{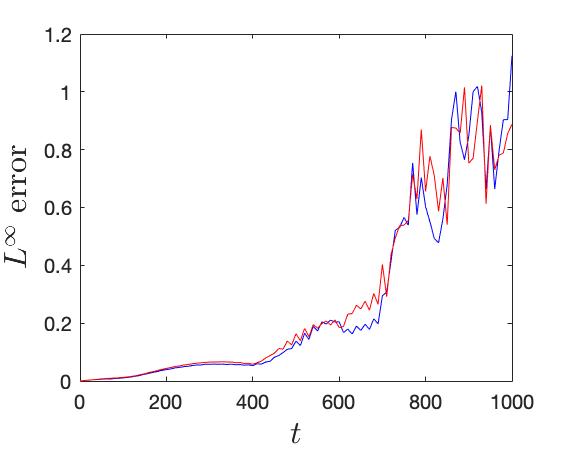}}
\hfill
\subfloat{\includegraphics[width=.5\linewidth]{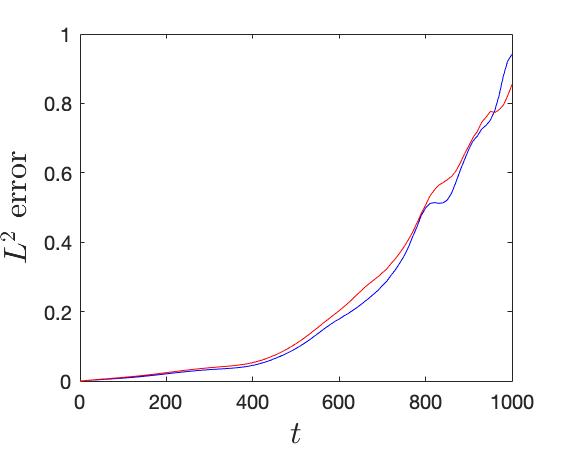}}
\caption{Relative errors on $\eta$ versus time between the fully and weakly nonlinear solutions  
for $B_0 = 0.0035$, $k_0 = 10$ and $(\lambda,\mu) = (1,1)$.
The blue curve represents the Hamiltonian Dysthe equation while the red curve represents the classical Dysthe equation.
Left panel: $L^\infty$ error. Right panel: $L^2$ error.}
\label{error_a00035_t1000}
\end{figure}

The relative $L^\infty$ and $L^2$ errors in Fig. \ref{error_a00035_t1000} again tend to slightly favor our Hamiltonian approach.
We note however that the situation seems to be reversed around $t = 960$,
with the errors for the classical Dysthe equation being lower from this point on.
Having said that, this switch occurs at a late stage of modulational instability when errors are significant
(near $80$\%) and thus very likely either weakly nonlinear model is no longer suitable,
as suggested in Figs. \ref{comp_y12_a00035} and \ref{comp_y34_a00035}.
An interpretation for this switch is that, because the Burgers equation automatically generates higher-order harmonics
of the surface wave spectrum via nonlinear interactions, it may in turn excessively amplify errors 
as the validity of the Hamiltonian Dysthe equation deteriorates over time.
From these plots, it seems that the time of validity is under $t = 1000$ which contrasts with
the expected time scale $\calO(\varepsilon^{-3}) \sim 1500$ based on the initial steepness. 
This value however is an overestimate in this case
because the wave steepness increases as a result of modulational instability.

\begin{figure}
\centering
\subfloat{\includegraphics[width=.5\linewidth]{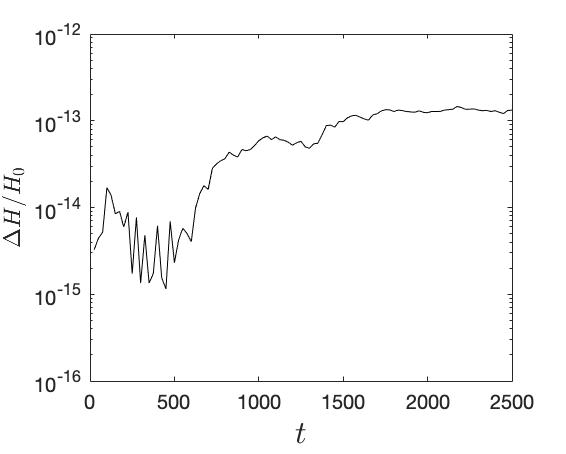}}
\hfill
\subfloat{\includegraphics[width=.5\linewidth]{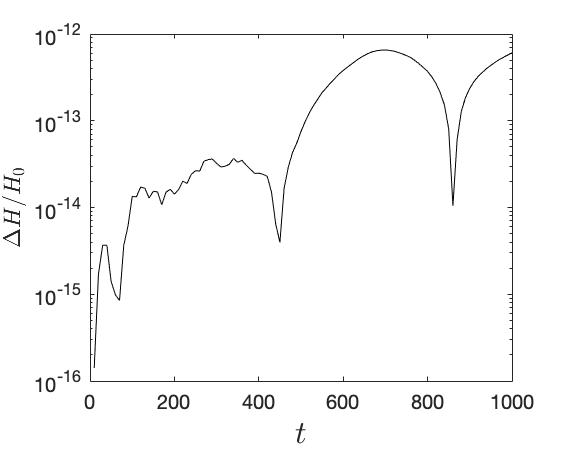}}
\caption{Relative error on $H$ versus time for the Hamiltonian Dysthe equation with $k_0 = 10$ and $(\lambda,\mu) = (1,1)$.
Left panel: $B_0 = 0.003$. Right panel: $B_0 = 0.0035$.}
\label{ener}
\end{figure}

Finally, the time evolution of the relative error
\[
\frac{\Delta H}{H_0} = \frac{|H - H_0|}{H_0} \,,
\]
on energy \eqref{Hamil2} associated with the Hamiltonian model \eqref{dysthe-eqn} is illustrated in Fig. \ref{ener} 
for $B_0 = 0.003$ and $0.0035$.
Double integrals in \eqref{Hamil2} and in the $L^2$ norm \eqref{errors} are computed via the double trapezoidal rule 
over the periodic square $[0,2\pi] \times [0,2\pi]$.
The reference value $H_0$ denotes the initial value of \eqref{Hamil2} at $t = 0$.
Overall, $H$ is very well conserved in both cases, despite a gradual loss of accuracy over time 
that is likely due to accumulation of numerical errors.

\section{Conclusions}

We propose a new Hamiltonian version of Dysthe's equation for the nonlinear modulation of three-dimensional gravity waves
on deep water. Starting from Zakharov's formulation of the water wave problem, we perform a change to Birkhoff normal form 
that is devoid of non-resonant triads, together with a sequence of canonical transformations, in order to obtain a reduced system.
The slowly varying wave envelope is introduced via a modulational Ansatz and the presence of multiple scales is handled
through a homogenization procedure.
The free surface is reconstructed by solving an auxiliary Hamiltonian system of differential  equations.
As a consequence, the entire solution process fits within a Hamiltonian framework.
To validate this approach, we conduct numerical simulations on the modulational instability of Stokes waves
and compare them to direct computations based on the full three-dimensional water wave system 
as well as to predictions from the classical Dysthe equation.
Various wave conditions are examined and very good agreement is obtained within the range of validity of this approximation.
In the future, we envision to extend these results to the finite-depth case for which 
a derivation of the third-order Birkhoff normal form is expected to be significantly more complicated.

\appendix

\section{From physical to Fourier variables} \label{App-0}

We rewrite  the Hamiltonian system (\ref{ww-hamiltonian-equation}) as
\begin{equation*}
\partial_t \begin{pmatrix}
\eta \\ \xi \\ \bar \eta \\ \bar\xi
\end{pmatrix} =  J_1 \, \nabla H(\eta,\xi, \bar \eta, \bar \xi)  = \begin{pmatrix}
J & {O}_{2\times 2} \\ 
{O}_{2\times 2} & J
\end{pmatrix} \begin{pmatrix}
\partial_\eta H \\ \partial_\xi H \\ \partial_{\bar \eta} H \\ \partial_{\bar \xi} H 
\end{pmatrix} \,,
\end{equation*}
where $J$ is given by (\ref{ww-hamiltonian-equation}) and ${O}_{2 \times 2}$ is the $2\times 2$ zero matrix. 
Denoting by $(\mathcal{F} \eta, \mathcal{F} \xi)$ the Fourier transforms of  $(\eta, \xi)$,  
we define the transformation $\tau : v=(\eta,\xi,\bar \eta, \bar \xi) \mapsto  w= (\overline {\mathcal{F} {\eta}}, \overline {\mathcal{F} {\xi}}, 
{\mathcal{F} {\eta}},  {\mathcal{F} {\xi}} )$
and write  $\widetilde H (w) = H(v)$. 
Applying calculus rules of transformations \cite{CGK05}, $w$ satisfies the system
\begin{equation}
\label{w-hamiltonian-appendix}
\partial_t  w = J_2 \, \nabla \widetilde H(w) \,,
\end{equation}
where $J_2= (\partial_v \tau) J_1 (\partial_v \tau)^{*}$ and $(\partial_v \tau)^*$  is the adjoint matrix operator, such that
\[
\partial_v \tau =
\begin{pmatrix}
 {O}_{2\times 2} & {F}^{-1} \\ {F} & {O}_{2\times 2} 
 \end{pmatrix} \,, \quad 
( \partial_v \tau)^* = \begin{pmatrix}
{O}_{2\times 2} & { F} \\ { F}^{-1} & {O}_{2\times 2}
\end{pmatrix} \,,
\]
with $F = \mathcal{F} I_{2 \times 2}$ and $F^{-1} = \mathcal{F}^{-1} I_{2 \times 2}$.
Computing the product of matrices, 
$$
J_2 = \begin{pmatrix}
F^{-1} J F^{-1} & O_{2\times 2}\\
O_{2\times 2} & F J F
\end{pmatrix} = \begin{pmatrix}
F^{-2} J & O_{2\times 2}\\
O_{2\times 2} & F^{2} J
\end{pmatrix} \,.
$$
Applying the matrix representation of $J_2$ to (\ref{w-hamiltonian-appendix}), we find 
\begin{equation*}
\label{iff-hamiltonian}
\partial_t \begin{pmatrix}
\eta_{\rm k} \\ \xi_{\rm k}
\end{pmatrix} = F^2 J \begin{pmatrix}
\partial_{\eta_{\rm k}} \widetilde H \\
\partial_{\xi_{\rm k}} \widetilde H
\end{pmatrix} \iff 
\partial_t F^{-2} \begin{pmatrix}
\eta_{\rm k} \\ \xi_{\rm k}
\end{pmatrix} = 
\partial_t \begin{pmatrix}
\eta_{-{\rm k}} \\\xi_{-{\rm k}}
\end{pmatrix} = 
J \begin{pmatrix}
\partial_{\eta_{\rm k}} \widetilde H \\
\partial_{\xi_{\rm k}} \widetilde H
\end{pmatrix} \,,
\end{equation*}
where we have used that $\mathcal{F}^{-2} (\eta_{\rm k}, \xi_{\rm k}) = (\eta_{-{\rm k}}, \xi_{-{\rm k}})$.
The  water wave system in the Fourier space identifies to (\ref{ww-hamiltonian-equation-Fourier}).

\section{Poisson bracket calculations}

\subsection{Useful identities} \label{App-A}


\begin{lemma}
\label{lemma-poisson-first-two-lines}
\begin{equation}
\label{poisson-summation-1-bis}
\begin{aligned}
& i\left\{\int F_{123} z_1 z_2 z_3 \delta_{123} d{\rm k}_{123} \,,  \int G_{456} 
\bar z_{-4} \bar z_{-5} \bar z_{-6} \delta_{456} d{\rm k}_{456} \right\}  \\[5pt]
&
  = -\int (F_{123}+F_{312}+F_{231}) 
(G_{456}+G_{645}+G_{564}) z_2 z_3 \bar z_{-5} \bar z_{-6} \delta_{123} 
\delta_{456} \delta_{14} d{\rm k}_{1\ldots6} \,,
\end{aligned}
\end{equation}
{\small{
\begin{equation}
\label{first-two-lines-delta-2}
\begin{aligned}
&\int F_{{\rm k}_1}^{{\rm k}_2,{\rm k}_3} G_{{\rm k}_4}^{{\rm k}_5,{\rm k}_6} z_2 z_3 \bar z_{-5} \bar 
z_{-6} \delta_{123} \delta_{456} \delta_{14} d{\rm k}_{1\ldots6}  
=  \int F_{-{\rm k}_1-{\rm k}_2}^{{\rm k}_1, {\rm k}_2} G_{-{\rm k}_3-{\rm k}_4}^{{\rm k}_3, {\rm k}_4} z_1 z_2 \bar z_{-3} 
\bar z_{-4} \delta_{1234} d{\rm k}_{1\ldots4} \,.
\end{aligned}
\end{equation}
}}
Assuming that  $F_{123} = F_{213}$ and $G_{456}=G_{546}$, then
\begin{equation}
\label{poisson-last-two-lines-bis}
\begin{aligned}
&i \left\{\int F_{123}  z_{1}  z_{2} \bar z_{-3} \delta_{123} d{\rm k}_{123} \,,  
\int G_{456} \bar z_{-4} \bar z_{-5} z_{6} \delta_{456} d{\rm k}_{456} 
\right\}\\[5pt]
= &  \int (-
4 F_{123} G_{456}
z_{2} \bar z_{-3} \bar z_{-5} z_{6} \delta_{123} \delta_{456} 
\delta_{14}     
+ 
 F_{123} G_{456} z_{1} z_{2} \bar z_{-4} \bar z_{-5} 
\delta_{123} \delta_{456} \delta_{36} ) d{\rm k}_{1\ldots6} \,,
\end{aligned}
\end{equation}
{\small{
\begin{equation}
\label{last-two-lines-positive-part}
\begin{aligned}
& \int F_{{\rm k}_1}^{{\rm k}_2, {\rm k}_3} G_{{\rm k}_4}^{ {\rm k}_5, {\rm k}_6} z_{2} \bar z_{-3} \bar z_{-5} 
z_{6} \delta_{123} \delta_{456} \delta_{14} d{\rm k}_{1\ldots6}
  = \int F_{-{\rm k}_1-{\rm k}_3}^{{\rm k}_1, {\rm k}_3} G_{-{\rm k}_2-{\rm k}_4}^{{\rm k}_4, {\rm k}_2} z_1 z_2 \bar 
z_{-3} \bar z_{-4} \delta_{1234} d{\rm k}_{1\ldots4} \,,
\end{aligned}
\end{equation}
\begin{equation}
\label{last-two-lines-negative-part}
\begin{aligned}
& \int F_{{\rm k}_1}^{{\rm k}_2, {\rm k}_3} G_{{\rm k}_4}^{{\rm k}_5, {\rm k}_6}  z_{1} z_{2} \bar z_{-4} \bar 
z_{-5} \delta_{123} \delta_{456} \delta_{36} d{\rm k}_{1\ldots6} 
  = \int F_{{\rm k}_1}^{{\rm k}_2, -{\rm k}_1-{\rm k}_2} G_{{\rm k}_3}^{{\rm k}_4, -{\rm k}_3-{\rm k}_4} z_1 z_2 \bar 
z_{-3} \bar z_{-4} \delta_{1234} d{\rm k}_{1\ldots4} \,.
\end{aligned}
\end{equation}
}}
\end{lemma}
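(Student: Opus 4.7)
The plan is to establish each of the five identities directly from the Fourier-space Poisson bracket formula \eqref{poisson-bracket-z} combined with a systematic resolution of the delta-function constraints. The first and third identities are true Poisson-bracket computations, while the other three are purely rearrangements of the integration variables.

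For \eqref{poisson-summation-1-bis}, set $I := \int F_{123} z_1 z_2 z_3 \delta_{123}\, d{\rm k}_{123}$ and $J := \int G_{456} \bar z_{-4} \bar z_{-5} \bar z_{-6} \delta_{456}\, d{\rm k}_{456}$. Varying $z$ at each of the three positions in $I$ and cyclically relabeling the dummy variables produces
\[
\partial_{z_{\rm k}} I = \int (F_{{\rm k} 2 3} + F_{3 {\rm k} 2} + F_{2 3 {\rm k}})\, z_2 z_3\, \delta_{{\rm k} 2 3}\, d{\rm k}_{23},
\]
and analogously for $\partial_{\bar z_{-{\rm k}}} J$ with cyclic permutations of the $G$-coefficients. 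Since $I$ depends only on $z$ and $J$ only on $\bar z$, the first term in \eqref{poisson-bracket-z} vanishes identically, leaving only $-\partial_{\bar z_{-{\rm k}_1}} J\, \partial_{z_{{\rm k}_2}} I\, \delta_{12}$. Renaming the outer variables ${\rm k}_1 \mapsto {\rm k}_4$ and ${\rm k}_2 \mapsto {\rm k}_1$ so that $\delta_{12}$ becomes $\delta_{14}$, and substituting the expressions above, yields exactly \eqref{poisson-summation-1-bis}.

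The rearrangement \eqref{first-two-lines-delta-2} is a pure change of variables. The three deltas $\delta_{123}$, $\delta_{456}$, $\delta_{14}$ enforce ${\rm k}_1 = -{\rm k}_4$ and the residual constraint ${\rm k}_2+{\rm k}_3-{\rm k}_5-{\rm k}_6 = 0$. Integrating out ${\rm k}_1$ and ${\rm k}_4$ via $\delta_{14}$ and $\delta_{123}$, then relabeling $({\rm k}_2,{\rm k}_3,{\rm k}_5,{\rm k}_6) \mapsto ({\rm k}_1,{\rm k}_2,{\rm k}_3,{\rm k}_4)$, forces the first argument of $F$ to be the recovered wavenumber $-{\rm k}_1-{\rm k}_2$ and the first argument of $G$ to be $-{\rm k}_3-{\rm k}_4$, giving exactly the right-hand side.

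For the mixed Poisson bracket \eqref{poisson-last-two-lines-bis}, both terms in \eqref{poisson-bracket-z} now contribute, producing the two distinct contractions $\delta_{14}$ and $\delta_{36}$. The assumed symmetries $F_{123}=F_{213}$ and $G_{456}=G_{546}$ double the derivative whenever it hits one of the two identical $z$-positions in $I$ (or $\bar z$-positions in $J$), so that
\[
\partial_{z_{\rm k}} I = 2\int F_{{\rm k} 2 3}\, z_2 \bar z_{-3}\, \delta_{{\rm k} 2 3}\, d{\rm k}_{23},
\]
and analogously for $\partial_{\bar z_{-{\rm k}}} J$, while the derivatives hitting the unique $\bar z_{-3}$ in $I$ or $z_6$ in $J$ acquire no such factor. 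Multiplying the symmetrized pair $-\partial_{\bar z_{-{\rm k}_1}} J \cdot \partial_{z_{{\rm k}_2}} I$ then produces the coefficient $-4 F_{123}G_{456}$ of the first integrand (via $\delta_{14}$), whereas the unsymmetrized pair $\partial_{z_{{\rm k}_1}} J \cdot \partial_{\bar z_{-{\rm k}_2}} I$ produces the coefficient $+F_{123}G_{456}$ of the second (via $\delta_{36}$). The rearrangements \eqref{last-two-lines-positive-part} and \eqref{last-two-lines-negative-part} then follow exactly as \eqref{first-two-lines-delta-2}: solve the three deltas for two of the six wavenumbers and relabel the remaining four, which determines the precise arguments $(-{\rm k}_1-{\rm k}_3,\, {\rm k}_1,\, {\rm k}_3)$ and $(-{\rm k}_2-{\rm k}_4,\, {\rm k}_4,\, {\rm k}_2)$ at which $F$ and $G$ are evaluated in each case. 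The only real obstacle is combinatorial bookkeeping: tracking the multiplicities coming from symmetrizing the functional derivatives, and verifying that each $F$- and $G$-coefficient lands at the correct permutation of its three recovered arguments after the relabeling. Once the delta constraints are resolved systematically, all five identities reduce to routine manipulations in Fourier variables.
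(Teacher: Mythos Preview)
Your proposal is correct and follows essentially the same approach as the paper: both apply the Poisson bracket formula \eqref{poisson-bracket-z}, compute the functional derivatives (which generate the sum over permutations of indices), and then relabel dummy variables to bring every term to the canonical monomial form. The paper only writes out the argument for \eqref{poisson-summation-1-bis} and remarks that the remaining identities are similar, which is exactly what you do as well.
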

We  only prove  (\ref{poisson-summation-1-bis}). The other identities are proved in a similar manner.
Applying the Poisson bracket formula (\ref{poisson-bracket-z}),
\begin{equation*}
\label{poisson-summation-2}
\begin{aligned}
&\left\{\int F_{123} z_1 z_2 z_3 \delta_{123} d{\rm k}_{123} \,,  \int G_{456} 
\bar z_{-4} \bar z_{-5} \bar z_{-6} \delta_{456} d{\rm k}_{456} \right\} \\[5pt]
& = - \frac{1}{i} \sum_{\substack{(l,m,n) \in P(1,2,3) \\ (p,q,r) \in 
P(4,5,6)}} \int F_{123}  G_{456} z_l z_m \bar z_{-p} \bar z_{-q} 
\delta_{123} \delta_{456} \delta_{14} d{\rm k}_{1\ldots6} \,,
\end{aligned}
\end{equation*}
where the summation goes over the sets of all permutations $P$ of  $(1,2,3)$ 
and $(4,5,6)$. 
We  then apply index rearrangements to turn all integrals in the 
summation into those with monomial $z_2 z_3 \bar z_{-5} \bar 
z_{-6}$. For example, rearranging $(1,2,3)\to (2,3,1)$ and $(4,5,6) \to 
(5,6,4)$, we get
$$
\int F_{123}  G_{456} z_1 z_2 \bar z_{-4} \bar z_{-5} \delta_{123} 
\delta_{456} \delta_{36} d{\rm k}_{1\ldots6} = \int F_{231}  G_{564} z_2 z_3 
\bar z_{-5} \bar z_{-6} \delta_{123} \delta_{456} \delta_{14} d{\rm k}_{1\ldots6} \,.
$$

\subsection{Proof of Proposition \ref{lemma-T2-coeff}}
\label{appendix-T2-coeff}
We look for terms of the form  $zz \bar z \bar z$  in the Poisson bracket $\{K^{(3)}, H^{(3)}\}$ with $H^{(3)}$ and $K^{(3)}$ given in 
 (\ref{H3-in-z-2})--(\ref{K3-in-z-2}). To distinguish between the indices associated  to $K^{(3)}$ and $H^{(3)}$,  we use $(1,2,3)$ for $K^{(3)}$ and  $(4,5,6)$  for $H^{(3)}$. We have
 $$
\begin{aligned}
i \, \{K^{(3)}, H^{(3)}\}_R = & \left\{ \int \frac{A_{123} ~z_1 z_2 z_3 }{\omega_1+\omega_2+\omega_3} \delta_{123} d{\rm k}_{123} \,, \int A_{456} \bar z_{-4} \bar z_{-5} \bar z_{-6} \delta_{456} d{\rm k}_{456} \right\}\\[5pt]
& - \left\{ \int \frac{A_{123} ~\bar z_{-1} \bar z_{-2} \bar z_{-3} }{\omega_1+\omega_2+\omega_3} \delta_{123} d{\rm k}_{123} \,, \int A_{456}  z_4 z_5 z_6 \delta_{456} d{\rm k}_{456} \right\}\\[5pt]
& + \left\{ \int \frac{A_{123} ~z_{1} z_{2} \bar z_{-3} }{\omega_1+\omega_2-\omega_3}  \delta_{123} d{\rm k}_{123} \,, \int A_{456} \bar z_{-4} \bar z_{-5} z_6 \delta_{456} d{\rm k}_{456} \right\}\\[5pt]
& - \left\{ \int \frac{A_{123} ~\bar z_{-1} \bar z_{-2} z_{3} }{\omega_1+\omega_2-\omega_3} \delta_{123} d{\rm k}_{123} \,, \int A_{456} z_{4} z_{5} \bar z_{-6} \delta_{456} d{\rm k}_{456} \right\} \,.
\end{aligned}
$$
The second and fourth lines can be  modified using the antisymmetry property of the Poisson bracket and interchanging the indices $(1,2,3)$ and $(4,5,6)$:
\begin{equation}
\label{K3-H3-poisson}
\begin{aligned}
i \, \{K^{(3)}, H^{(3)}\}_R = & \left\{ \int \frac{A_{123} ~z_1 z_2 z_3}{\omega_1+\omega_2+\omega_3}  \delta_{123} d{\rm k}_{123} \,, \int A_{456} \bar z_{-4} \bar z_{-5} \bar z_{-6} \delta_{456} d{\rm k}_{456} \right\}\\[5pt]
& + \left\{\int A_{123}  z_1 z_2 z_3 \delta_{123} d{\rm k}_{123} \,, \int \frac{A_{456} ~ \bar z_{-4} \bar z_{-5} \bar z_{-6} }{\omega_4+\omega_5+\omega_6}  \delta_{456} d{\rm k}_{456} \right\}\\[5pt]
& + \left\{ \int \frac{A_{123} ~ z_{1} z_{2} \bar z_{-3} }{\omega_1+\omega_2-\omega_3} \delta_{123} d{\rm k}_{123} \,, \int A_{456} \bar z_{-4} \bar z_{-5} z_6 \delta_{456} d{\rm k}_{456} \right\}\\[5pt]
& + \left\{ \int A_{123}  z_{1} z_{2} \bar z_{-3} \delta_{123} d{\rm k}_{123} \,, \int \frac{A_{456}  \bar z_{-4} \bar z_{-5} z_6 }{\omega_4 + \omega_5 - \omega_6}\delta_{456} d{\rm k}_{456} \right\}\\[5pt]
&:= i \, (R_1+R_2+R_3+R_4) \,, \\[5pt]
\end{aligned}
\end{equation} 
where we denote each line of (\ref{K3-H3-poisson}) by $R_1$, $R_2$, $R_3$, $R_4$ respectively.

\noindent
{\bf Step 1.}  
We show that the coefficient ${\rm I} = R_1+ R_2$.  %
Using  the   identity  (\ref{poisson-summation-1-bis}), 
 we get
\begin{equation}
\label{first-line-poisson-bracket}
\begin{aligned}
R_1& = \int \frac{A_{123}+A_{312}+A_{231}}{
\omega_1 + \omega_2 + \omega_3} (A_{456}+A_{645}+A_{564}) z_2 z_3 \bar z_{-5} \bar z_{-6} \delta_{123} \delta_{456} \delta_{14} d{\rm k}_{123456} \,.
\end{aligned}
\end{equation}
From (\ref{A-123}), 
$$
S_{123} = 
 \sqrt[4]{g |{\rm k}_1| |{\rm k}_2| |{\rm k}_3|}  ~\ell_{{\rm k}_1}^{{\rm k}_3} \,, \quad  
A_{123}+A_{312}+A_{231} = \frac{ S_{123}}{8 \pi \sqrt{2}} ( \ell_{{\rm k}_1}^{{\rm k}_3} + \ell_{{\rm k}_1}^{{\rm k}_2}  + \ell_{{\rm k}_2}^{{\rm k}_3} ) \,.
$$
To  simplify the integral in (\ref{first-line-poisson-bracket}), we use the identity (\ref{first-two-lines-delta-2}) with
$$
\begin{aligned}
 F_{{\rm k}_1, {\rm k}_2, {\rm k}_3} := \frac{A_{123}+A_{312}+A_{231}}{\omega_1 + \omega_2 + \omega_3} \,, \quad 
G_{{\rm k}_4, {\rm k}_5, {\rm k}_6} := A_{456}+A_{645} + A_{564} \,,
\end{aligned}
$$
and obtain
\begin{equation*}
\label{first-line-result}
\begin{aligned}
&R_1 = \int {\rm I}_A z_1 z_2 \bar z_{-3} \bar z_{-4} \delta_{1234} d{\rm k}_{1234} \,,
\end{aligned}
\end{equation*}
$$
\begin{aligned}
{\rm I}_A  = &~ \frac{g^{1/4}}{128 \pi^2} \frac{\sqrt[4]{|{\rm k}_1||{\rm k}_2||{\rm k}_3||{\rm k}_4| |{\rm k}_1+{\rm k}_2| |{\rm k}_3+{\rm k}_4|}}{\omega_{{\rm k}_1}+ \omega_{{\rm k}_2}+ \omega_{{\rm k}_1+{\rm k}_2}} \\
& \times (\ell_{{\rm k}_1}^{{\rm k}_2} + \ell_{{\rm k}_1+{\rm k}_2}^{ -{\rm k}_1} + \ell_{{\rm k}_1+{\rm k}_2}^{-{\rm k}_2})  (\ell_{{\rm k}_3}^{{\rm k}_4} + \ell_{{\rm k}_3+{\rm k}_4}^{ -{\rm k}_3} + \ell_{{\rm k}_3+{\rm k}_4}^{-{\rm k}_4}) \,.
\end{aligned}
$$
We repeat these steps to  compute $R_2$  (the second line of (\ref{K3-H3-poisson})) and find
\begin{equation*}
\label{second-line-result}
\begin{aligned}
&R_2 = \int {\rm I}_B z_1 z_2 \bar z_{-3} \bar z_{-4} \delta_{1234} d{\rm k}_{1234} \,,
\end{aligned}
\end{equation*}
$$
\begin{aligned}
{\rm I}_B  = &~ \frac{g^{1/4}}{128 \pi^2} \frac{\sqrt[4]{|{\rm k}_1||{\rm k}_2||{\rm k}_3||{\rm k}_4| |{\rm k}_1+{\rm k}_2| |{\rm k}_3+{\rm k}_4|}}{\omega_{{\rm k}_3}+ \omega_{{\rm k}_4}+ \omega_{{\rm k}_3+{\rm k}_4}} \\
& \times (\ell_{{\rm k}_1}^{{\rm k}_2} + \ell_{{\rm k}_1+{\rm k}_2}^{-{\rm k}_1} + \ell_{{\rm k}_1+{\rm k}_2}^{ -{\rm k}_2}) 
(\ell_{{\rm k}_3}^{{\rm k}_4} + \ell_{{\rm k}_3+{\rm k}_4}^{ -{\rm k}_3} + \ell_{{\rm k}_3+{\rm k}_4}^{ -{\rm k}_4}) \,.
\end{aligned}
$$
Thus  $R_1+R_2 = \int {\rm I} \, z_1 z_2 \bar z_{-3} \bar z_{-4} \delta_{1234} d{\rm k}_{1234}$
where ${\rm I} = {\rm I}_A + {\rm I}_B$ is given in (\ref{I-term-new}).

{\bf Step 2.} We show that the combination of $R_3$ and $R_4$ identifies to ${\rm II} + {\rm III}$.
We apply the identity (\ref{poisson-last-two-lines-bis}) 
and get
\begin{equation}
\label{poisson-third-line-1}
\begin{aligned}
R_3 
= & \int  \frac{4 A_{123} A_{456}}{\omega_1+\omega_2-\omega_3}
z_{2} \bar z_{-3} \bar z_{-5} z_{6} \delta_{123} \delta_{456} \delta_{14} d{\rm k}_{123456}  \\
& 
 - \int  \frac{A_{123} A_{456}}{\omega_1+\omega_2-\omega_3} z_{1} z_{2} \bar z_{-4} \bar z_{-5} \delta_{123} \delta_{456} \delta_{36} d{\rm k}_{123456} \,.
\end{aligned}
\end{equation}
To further simplify the RHS, we  turn all its monomials into $z_1 z_2 \bar z_{-3} \bar z_{-4}$. This can be done by using the  identities 
(\ref{last-two-lines-positive-part})  and (\ref{last-two-lines-negative-part}).
We now  identify the terms on the RHS of (\ref{poisson-third-line-1}) 
\begin{equation*}
\begin{aligned}
R_3& = \int ({\rm II}_A + {\rm III}_A) z_1 z_2 \bar z_{-3} \bar z_{-4} \delta_{1234} d{\rm k}_{1234} \,,
\end{aligned}
\end{equation*}
where ${\rm II}_A$ comes from the first term in (\ref{poisson-third-line-1}),
$$
\begin{aligned}
{\rm II}_A  = &~ \frac{g^{1/4}}{32 \pi^2} \frac{\sqrt[4]{|{\rm k}_1||{\rm k}_2||{\rm k}_3||{\rm k}_4| |{\rm k}_1+{\rm k}_3| |{\rm k}_2+{\rm k}_4|}}{\omega_{{\rm k}_1}+ \omega_{{\rm k}_1+{\rm k}_3}- \omega_{{\rm k}_3}} \\
& \times (\ell_{{\rm k}_1}^{{\rm k}_3} + \ell_{{\rm k}_1+{\rm k}_3}^{-{\rm k}_3} - \ell_{{\rm k}_1+{\rm k}_3}^{-{\rm k}_1}) 
(\ell_{{\rm k}_4}^{{\rm k}_2} + \ell_{{\rm k}_2+{\rm k}_4}^{ -{\rm k}_2} - \ell_{{\rm k}_2+{\rm k}_4}^{ -{\rm k}_4}) \,,
\end{aligned}
$$
and ${\rm III}_A$  from the second term in (\ref{poisson-third-line-1}),
$$
\begin{aligned}
{\rm III}_A  = & - \frac{g^{1/4}}{128 \pi^2} \frac{\sqrt[4]{|{\rm k}_1||{\rm k}_2||{\rm k}_3||{\rm k}_4| |{\rm k}_1+{\rm k}_2| |{\rm k}_3+{\rm k}_4|}}{\omega_{{\rm k}_1}+ \omega_{{\rm k}_2}- \omega_{{\rm k}_1+{\rm k}_2}} \\
& \times (\ell_{{\rm k}_1+{\rm k}_2}^{-{\rm k}_1} + \ell_{{\rm k}_1+{\rm k}_2}^{-{\rm k}_2} - \ell_{{\rm k}_1}^{{\rm k}_2}) 
(\ell_{{\rm k}_3+{\rm k}_4}^{-{\rm k}_3} + \ell_{{\rm k}_3+{\rm k}_4}^{ -{\rm k}_4} - \ell_{{\rm k}_3}^{{\rm k}_4}) \, .
\end{aligned}
$$
We repeat these steps to compute  $R_4$  (fourth line in (\ref{K3-H3-poisson})) and obtain
\begin{equation*}
\begin{aligned}
R_4 & = \int ({\rm II}_B + {\rm III}_B) z_1 z_2 \bar z_{-3} \bar z_{-4} \delta_{1234} d{\rm k}_{1234} \,,
\end{aligned}
\end{equation*}
where 
$$
\begin{aligned}
{\rm II}_B  = &~ \frac{g^{1/4}}{32 \pi^2} \frac{\sqrt[4]{|{\rm k}_1||{\rm k}_2||{\rm k}_3||{\rm k}_4| |{\rm k}_1+{\rm k}_3| |{\rm k}_2+{\rm k}_4|}}{\omega_{{\rm k}_4}+ \omega_{{\rm k}_2+{\rm k}_4}- \omega_{{\rm k}_2}} \\
& \times (\ell_{{\rm k}_1}^{{\rm k}_3} + \ell_{{\rm k}_1+{\rm k}_3}^{-{\rm k}_3} - \ell_{{\rm k}_1+{\rm k}_3}^{-{\rm k}_1}) 
(\ell_{{\rm k}_4}^{{\rm k}_2} + \ell_{{\rm k}_2+{\rm k}_4}^{ -{\rm k}_2} - \ell_{{\rm k}_2+{\rm k}_4}^{ -{\rm k}_4}) \,,
\end{aligned}
$$
and ${\rm III}_B$ comes from the second term in (\ref{poisson-third-line-1}),
$$
\begin{aligned}
{\rm III}_B  = & - \frac{g^{1/4}}{128 \pi^2} \frac{\sqrt[4]{|{\rm k}_1||{\rm k}_2||{\rm k}_3||{\rm k}_4| |{\rm k}_1+{\rm k}_2| |{\rm k}_3+{\rm k}_4|}}{\omega_{{\rm k}_3}+ \omega_{{\rm k}_4}- \omega_{{\rm k}_3+{\rm k}_4}} \\
& \times (\ell_{{\rm k}_1+{\rm k}_2}^{-{\rm k}_1} + \ell_{{\rm k}_1+{\rm k}_2}^{-{\rm k}_2} - \ell_{{\rm k}_1}^{{\rm k}_2}) 
(\ell_{{\rm k}_3+{\rm k}_4}^{-{\rm k}_3} + \ell_{{\rm k}_3+{\rm k}_4}^{ -{\rm k}_4} - \ell_{{\rm k}_3}^{{\rm k}_4}) \,.
\end{aligned}
$$
Thus
$
R_3+R_4 = \int({\rm II}_A+{\rm II}_B+{\rm III}_A+{\rm III}_B) z_1 z_2 \bar z_{-3} \bar z_{-4} \delta_{1234} d{\rm k}_{1234}
$
where ${\rm II}_A+{\rm II}_B$ identifies to ${\rm II}$ in (\ref{II-term-new}) and 
${\rm III}_A+{\rm III}_B$ identifies to ${\rm III}$ in (\ref{III-term}).
This completes the proof.

\section*{Acknowledgments}

A. K. thanks the Fields Institute for its
support and hospitality during the Fall 2020.
C. S. is partially supported by the NSERC (grant
number 2018-04536) and a Killam Research Fellowship from the Canada Council for the Arts.

\bibliographystyle{siamplain}

\end{document}